\def\RR{{\mathbb R}}
\newcommand{\eps}{\varepsilon}
\newtheorem{ackn}{Acknowledgments\!}
\newtheorem{theorem}{{Theorem}}[section]
\newtheorem{lemma}[theorem]{{Lemma}}
\newtheorem{corollary}[theorem]{{Corollary}}
\theoremstyle{definition}
\numberwithin{equation}{section}
\begin{document}

\title[Semilinear elliptic equations on manifolds with nonnegative Ricci curvature]{Semilinear elliptic equations \\on manifolds with nonnegative Ricci curvature
}

\author{Giovanni Catino}
\address{Giovanni Catino\\
	Dipartimento di Matematica,
	Politecnico di Milano, Piazza Leonardo da Vinci 32, 20133, Milano,
	Italy}
\email{giovanni.catino@polimi.it}

\author[Dario D. Monticelli]{Dario D. Monticelli}
\address[Dario D. Monticelli]{Dipartimento di Matematica, Politecnico di Milano, Piazza Leonardo da Vinci 32, 20133 Milano, Italy}
\email[]{dario.monticelli@polimi.it}


\begin{abstract}
In this paper we prove classification results for solutions to subcritical and critical semilinear elliptic equations with a nonnegative potential on noncompact manifolds with nonnegative Ricci curvature. We show in the subcritical case that all nonnegative solutions vanish identically. Moreover, under some natural assumptions, in the critical case we prove a strong rigidity result, namely we classify all nontrivial solutions showing that they exist only if the potential is constant and the manifold is isometric to the Euclidean space.
\end{abstract}

\subjclass[2010]{35J91, 35B33, 58J05, 53C21, 53C24}
\keywords{semilinear elliptic equations, manifolds with nonnegative Ricci curvature, rigidity of solutions to PDEs}

\maketitle


\section{Introduction}

Let $(M^n, g)$, $n\geq 2$, be a smooth complete (with no boundary), noncompact, $n$-dimensional Riemannian manifold with nonnegative Ricci curvature. In this paper we consider nonnegative solutions to the semilinear elliptic equation
\begin{equation}\label{eq-eq}
-\Delta_g u = K u^p
\end{equation}
where $\Delta_g$ is the Laplace-Beltrami operator and $K$ is a smooth nonnegative function on $M$. We restrict our attention to the superlinear and subcritical case
$$
1<p< p_c\qquad\text{where }\qquad p_c=\begin{cases}+\infty &\text{if }n=2\\\frac{n+2}{n-2} &\text{if }n\geq 3\end{cases}
$$
and to the critical case, $p=p_c=\frac{n+2}{n-2}$ when $n\geq 3$. In case $n=2$, we deal with solutions to the critical equation with exponential nonlinearity
\begin{equation}\label{eq-eq2}
-\Delta_g u = K e^u.
\end{equation}
If we denote by $R=R_g$ the scalar curvature of the metric $g$, it is known that these critical equations arise in the problem of prescribing the scalar curvature of a conformal metric when the original metric has zero scalar curvature. More precisely, if $n\geq 3$, then the scalar curvature of the metric $\tilde{g}=u^{\frac{4}{n-2}}g$, $u>0$, is given by
$$
-\Delta_g u +\tfrac{n-2}{4(n-1)} R_g= R_{\tilde{g}} \,u^{\frac{n+2}{n-2}}
$$
while in dimension $n=2$, the corresponding equation for the conformal change $\tilde{g}=e^u g$ reads as
$$
-\Delta_g u +R_g=R_{\tilde{g}}\,  e^{u}.
$$
It is natural to expect stronger classification results when $K\geq 0$ which will be the case we study in this paper. In particular, we will first state our results in the simpler but geometrically relevant case $K\equiv 1$. This choice of $K$ corresponds to the so called Yamabe problem, when $(M^n,g)$ is scalar flat and hence Ricci flat. It is clear that solutions to \eqref{eq-eq} or \eqref{eq-eq2} are trivial when $M$ is compact.

\medskip

In the Euclidean setting, problem \eqref{eq-eq} with $K\equiv 1$ is now well understood. Gidas and Spruck \cite{gidspr} showed that the only nonnegative solution when $1<p<p_c$ is $u\equiv 0$ on $\RR^n$, via test functions argument. Indeed, this is a consequence of the following general result that they proved in case $n\geq 3$.
\begin{theorem}[\cite{gidspr}]\label{t-subk1} Let $(M^n,g)$ be a  complete noncompact Riemannian manifold with nonnegative Ricci curvature and let $u\in C^{2}(M)$ be a nonnegative solution of
$$
-\Delta u = u^p\quad\text{in }M,\qquad\text{with } 1<p<p_c.
$$
Then $$u\equiv 0\quad\text{on }M.$$
\end{theorem}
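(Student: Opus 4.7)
The plan is to combine the Bochner formula (into which the hypothesis $\mathrm{Ric}\geq 0$ feeds) with an integral test-function technique of Gidas--Spruck type, and then close the estimate as the cutoff scale tends to infinity, using Bishop--Gromov's volume comparison. To begin, the strong minimum principle applied to the superharmonic inequality $-\Delta u=u^p\geq 0$ either gives $u\equiv 0$ immediately or reduces us to the case $u>0$ everywhere on $M$; we argue under the latter assumption and aim at a contradiction. Fix $o\in M$ and a standard cutoff $\varphi\in C^\infty_c(M)$ with $\varphi\equiv 1$ on $B_R(o)$, $\mathrm{supp}\,\varphi\subset B_{2R}(o)$, and $|\nabla\varphi|\leq C/R$.

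The first integral relation comes from multiplying $-\Delta u=u^p$ by $u^q\varphi^{2m}$ and integrating by parts, yielding
\begin{equation*}
q\int u^{q-1}|\nabla u|^2\varphi^{2m}+2m\int u^q\varphi^{2m-1}\langle\nabla u,\nabla\varphi\rangle=\int u^{p+q}\varphi^{2m},
\end{equation*}
for parameters $q\in\mathbb{R}$ and $m\geq 1$ to be chosen later. A second, orthogonal relation comes from the Bochner formula applied to $v=u^{\alpha}$ with $\alpha$ a further free parameter: using $\mathrm{Ric}\geq 0$ one has $\tfrac12\Delta|\nabla v|^2\geq |\nabla^2 v|^2+\langle\nabla v,\nabla\Delta v\rangle$, and the refined Kato inequality $|\nabla^2 v|^2\geq \tfrac{n}{n-1}|\nabla|\nabla v||^2$ provides an optimal positive remainder. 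Computing $\Delta v=-\alpha u^{\alpha+p-1}+\alpha(\alpha-1)u^{\alpha-2}|\nabla u|^2$ directly from the equation, multiplying by $\varphi^{2m}$ and integrating by parts gives a second integral inequality involving $\int u^{q-2}|\nabla u|^4\varphi^{2m}$, $\int u^{p+q-1}|\nabla u|^2\varphi^{2m}$ and $\int u^{p+q}\varphi^{2m}$, up to boundary contributions of order $R^{-2}$.

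Combining the two relations and tuning $(q,\alpha,m)$ should produce an estimate of the shape
\begin{equation*}
A(n,p,q)\int u^{p+q}\varphi^{2m}\leq \frac{C}{R^{2}}\int_{B_{2R}} u^{s},
\end{equation*}
in which $A(n,p,q)$ is the value of a quadratic form in the free parameters, positive precisely when $p<p_c$. A Hölder step together with the Bishop--Gromov bound $\mathrm{Vol}(B_{2R})\leq CR^n$ (itself a consequence of $\mathrm{Ric}\geq 0$) controls the right-hand side and allows one to absorb part of it into the left; letting $R\to\infty$ then forces $\int_M u^{p+q}=0$, and hence $u\equiv 0$.

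The main obstacle is the algebraic bookkeeping in the combination step: one must follow several coupled integrals through Young's inequality and verify that the ``good'' (Bochner-generated) contributions dominate the ``bad'' ones from integration by parts and from the Kato term. The critical exponent $p_c=(n+2)/(n-2)$ surfaces as the exact threshold at which the underlying quadratic form in $(q,\alpha)$ becomes degenerate; once a window of admissible exponents is produced, the rest of the argument is a routine limiting procedure.
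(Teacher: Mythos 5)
Your overall plan — Bochner formula applied to a power $u^{\alpha}$, combined with a test-function identity from the equation, tuned over free parameters so that the resulting quadratic form is positive precisely when $p<p_c$, and closed via Bishop--Gromov and cutoffs — is exactly the strategy the paper follows (Lemmas~\ref{l-BV}--\ref{cor3} and the $n\geq 3$ part of the proof of Theorem~\ref{t-sub}). So the high-level route is the right one.

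There are, however, two concrete problems. First, the inequality you call the ``refined Kato inequality,'' $|\nabla^2 v|^2\geq\tfrac{n}{n-1}|\nabla|\nabla v||^2$, is \emph{false} for general $v$: it holds for harmonic functions, where the $\nabla v$-directional row of the Hessian can be traded against the trace-free remainder, but here $v=u^{\alpha}$ is very far from harmonic ($\Delta v=-\alpha u^{\alpha+p-1}+\alpha(\alpha-1)u^{\alpha-2}|\nabla u|^2$ as you compute). If you try to close the argument with this inequality you would produce a coefficient that does not match the correct threshold. The inequality the paper actually uses is the universal algebraic identity $|\nabla^2 v|^2=|\mathring{\nabla}^2 v|^2+\tfrac1n(\Delta v)^2$, from which $|\nabla^2 v|^2\geq\tfrac1n(\Delta v)^2$; this, fed through the Bochner integration together with the equation for $\Delta v$, yields a clean identity (Lemma~\ref{l-BV}) whose leading coefficients $\alpha(n,m,d)$ and $\beta(n,p,d)$ can both be made positive exactly when $p<\tfrac{n+2}{n-2}$. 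Replacing your Kato step by this decomposition, the rest of your sketch (Young's inequality on the boundary terms, Bishop--Gromov volume bound $\mathrm{Vol}(B_{2R})\leq CR^n$, letting $R\to\infty$) is precisely what the paper does.

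Second, your argument as written does not cover $n=2$, where $p_c=+\infty$ and the subcriticality condition degenerates; the paper disposes of this case by a separate, more elementary argument (parabolicity of a surface with nonnegative curvature: the test function $u^{\delta}\psi^2$ with $\delta\in(-p,-1)$, a logarithmic cutoff, and Bishop--Gromov to show every positive superharmonic function is constant). You should either include this case explicitly or note that your Bochner route is restricted to $n\geq 3$.
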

It is known that the same result holds also when $n=2$. In Theorem \ref{t-sub} we provide a simpler proof of a more general result which includes Theorem \ref{t-subk1} for every $n\geq 2$.

\medskip

The explicit positive solutions to the critical equation
\begin{equation}\label{eq-crit127}
-\Delta u = u^{\frac{n+2}{n-2}}\quad\text{on }\RR^n,
\end{equation}
with $n\geq 3$, are given by
\begin{equation}\label{eq-rad}
u(x)=\left(\frac{1}{a + b|x-x_0|^2}\right)^{\frac{n-2}{2}},
\end{equation}
with $a,b>0$, $1=n(n-2)ab$ and $x_0\in\RR^n$. These functions were constructed by Aubin \cite{aub} and Talenti \cite{tal} as minimizers of
$$
S_g(M):=\inf_{0\not\equiv u\in D^{1,2}(M)}\frac{\int_M |\nabla u|^2\,dV_g}{\left(\int_M u^{\frac{2n}{n-2}}\,dV_g\right)^{\frac{n-2}{n}}}
$$
where $dV_g$ is the canonical volume element and
$$
D^{1,2}(M)=\left\{u\in L^{\frac{2n}{n-2}}(M): |\nabla u|\in L^2(M)\right\},
$$
with $M=\RR^n$. We note that, if $S_g(M)>0$, then it is the best constant in the Sobolev embedding. Caffarelli, Gidas and Spruck \cite{cafgidspr} (see also \cite{cheli, lizha}) proved that any solution to \eqref{eq-crit127} is radial and hence given by \eqref{eq-rad}.  Essential tools in their proof are the moving planes method and the Kelvin transform that allow to reduce the problem to the study of (singular) solutions that have nice decaying properties at infinity. Previous results were proved by Gidas, Ni and Nirenberg \cite{gidninir} and Obata \cite{oba} under the additional assumption that $u$ decays as $|x|^{-(n-2)}$ at infinity. In case $n=2$ solutions to
$$\begin{cases}-\Delta u= e^u\quad\text{in }\RR^2 \\ \int_{\RR^2}  e^u <+\infty \end{cases} $$
were classified by Chen and Li \cite{cheli}, who showed that
$$
u(x)=\log\frac{1}{(a+b|x-x_0|^2)^2},
$$
for some $a,b>0$ with $1=8ab$ and $x_0\in\RR^2$. Their method relies on the moving plane method and on a previous result by Brezis and Merle \cite{bremer} on the upper boundedness of solutions.

\medskip

Extending such classification results for the critical equations to the case of more general Riemannian manifolds requires the introduction of different techniques than those used in the Euclidean setting, which strongly rely on the conformal invariance of the problem and the rich structure of the conformal group of the ambient space. In this paper we study the problem in the natural setting of a complete, noncompact Riemannian manifold $(M,g)$ with nonnegative Ricci curvature, also allowing for the presence of a nonnegative potential function $K$.

\medskip

Here and in the rest of the paper we will denote by $Ric=Ric_g$, $R=R_g$, $dV_g$ and $r(\cdot)$ the Ricci curvature, the scalar curvature, the canonical Riemannian volume form and the geodesic distance from a fixed reference point of $M$, respectively.

\medskip

The novelty of our approach consists in using a careful test functions argument which starts from the classical Bochner formula
$$
\frac12 \Delta|\nabla f|^2 = |\nabla^2 f|^2 + \operatorname{Ric}(\nabla f,\nabla f)+\langle \nabla f, \nabla \Delta f\rangle,
$$
through which we are able to prove integral estimates involving the squared norm of the traceless Hessian
$$
\mathring{\nabla}^2 f = \nabla^2 f - \frac{\Delta f}{n}\,g
$$
of a suitable power of the solution and the Ricci tensor in the direction of the gradient of the solution. Under very general assumptions, we can then show that both quantities must vanish identically on $M$ and this leads to the classification of nontrivial solutions and the rigidity of the ambient manifold, by using a characterization of conformal gradient vector fields. The starting point of our approach is partly reminiscent of the method used by Gidas and Spruck \cite{gidspr} for subcritical equations ($n\geq 3$), Bidaut-V\'eron and Raoux \cite{bidrao} for  subcritical systems and the first author with Castorina and Mantegazza \cite{cascatman} for subcritical parabolic equations.

\medskip

In the first theorem we deal with the case $n\geq 3$ and finite energy solutions, i.e. $u\in D^{1,2}(M)$.

\begin{theorem}\label{t-crik1} Let $(M^n,g)$, $n\geq 3$, be a complete noncompact Riemannian manifold with nonnegative Ricci curvature and let $u\in C^{2}(M)$ be a nonnegative finite energy solution of
\begin{equation}\label{eq-cp}
-\Delta u = u^{\frac{n+2}{n-2}}\quad\text{in }M.
\end{equation}
Then either $u\equiv 0$ on $M$ or $(M^n,g)$ is isometric to $\RR^n$ with the Euclidean metric and
$$
u(x)=\left(\frac{1}{a + b|x-x_0|^2}\right)^{\frac{n-2}{2}}
$$
for some $a,b>0$ with $1=n(n-2)ab$ and $x_0\in\RR^n$.
\end{theorem}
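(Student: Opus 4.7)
The plan is to exploit the Bochner identity applied to a suitable power of $u$, derive via integration by parts an integral identity forcing both the traceless Hessian of this new function and a Ricci term to vanish, and then invoke a rigidity theorem for conformal gradient vector fields.

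First, by the strong maximum principle applied to \eqref{eq-cp} we may assume $u>0$ everywhere on $M$ (otherwise $u\equiv 0$ and there is nothing to prove). Introduce the conformal factor
$$
v = u^{-2/(n-2)}, \qquad \tilde g := v^{-2} g = u^{4/(n-2)}\, g,
$$
and use \eqref{eq-cp} to compute directly that $v$ satisfies the first-order algebraic-differential identity
$$
v\,\Delta v - \frac{n}{2}\,|\nabla v|^2 = \frac{2}{n-2}\qquad \text{on } M.
$$
The finite-energy hypothesis, combined with testing \eqref{eq-cp} against $u$ (with a standard cutoff, using Bishop--Gromov to discard the boundary), yields $\int_M u^{2n/(n-2)}\,dV_g<\infty$; both quantities translate into weighted integrability of derivatives of $v$ near infinity.

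Next, apply the Bochner formula to $v$, split $|\nabla^2 v|^2 = |\mathring\nabla^2 v|^2 + (\Delta v)^2/n$, and use the identity above to eliminate both $\Delta v$ and $\nabla\Delta v$ in favour of $v$ and $|\nabla v|^2$. Multiply by an appropriate negative power $v^{-\alpha}$ and by a cutoff $\eta_R$ (with $\eta_R\equiv 1$ on $B_R$, $\operatorname{supp}\eta_R\subset B_{2R}$, $|\nabla\eta_R|\le C/R$), and integrate over $M$. A careful repeated integration by parts, exploiting cancellations between the Bochner identity and the PDE for $v$, should reorganize everything into a clean identity of the form
$$
\int_M \bigl(|\mathring\nabla^2 v|^2 + \operatorname{Ric}(\nabla v,\nabla v)\bigr)\, v^{-\alpha}\, \eta_R\, dV_g \;=\; \mathcal{R}(R),
$$
where the remainder $\mathcal{R}(R)$ is supported on the annulus $B_{2R}\setminus B_R$ and is estimated by $|\nabla\eta_R|$ multiplied by quantities bounded in terms of $|\nabla u|^2$ and $u^{2n/(n-2)}$. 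The Bishop--Gromov inequality (from $\operatorname{Ric}\ge 0$) together with the finite-energy bound and Cauchy--Schwarz yield $\mathcal{R}(R)\to 0$ as $R\to\infty$. Since both terms on the left are nonnegative, this forces $\mathring\nabla^2 v\equiv 0$ and $\operatorname{Ric}(\nabla v,\nabla v)\equiv 0$ on $M$.

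Finally, $\mathring\nabla^2 v\equiv 0$ means $\nabla v$ is a nontrivial closed conformal vector field on $(M,g)$. By the classical Tashiro--Kanai classification of complete manifolds admitting a non-constant function with Hessian pointwise proportional to the metric, $(M,g)$ must be one of a short list of warped-product models; the hypotheses $\operatorname{Ric}\ge 0$ and $\operatorname{Ric}(\nabla v,\nabla v)=0$ together rule out every case except the flat Euclidean space $(\RR^n, g_{\mathrm{flat}})$. Once on $\RR^n$, the condition $\nabla^2 v = (\Delta v/n)\,g$ forces $v$ to be an affine-plus-quadratic function of $x$, and the first-order identity $v\Delta v-\tfrac{n}{2}|\nabla v|^2=2/(n-2)$ pins down the coefficients: $v(x)=a+b|x-x_0|^2$ with $1=n(n-2)ab$, $a,b>0$. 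Inverting $v=u^{-2/(n-2)}$ yields the explicit formula for $u$. The main obstacle is engineering the multiplier $v^{-\alpha}$ and the integration by parts in the second step: being at the critical exponent, every term is borderline in scaling, and several simultaneous cancellations are needed both to isolate the nonnegative curvature/traceless-Hessian contributions and to ensure that the resulting cutoff remainder is controlled solely by the a priori finite quantities $\int_M |\nabla u|^2\,dV_g$ and $\int_M u^{2n/(n-2)}\,dV_g$.
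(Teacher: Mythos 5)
Your plan is essentially the paper's own: the substitution $v = u^{-2/(n-2)}$ is exactly $w^{(m+2-d)/2}$ for the paper's critical choice $m=-\tfrac{2}{n-2}$, $d=\tfrac{2(n-1)}{n-2}$, and the paper's Lemma~\ref{l-BV}, Lemma~\ref{l-key} and Corollary~\ref{cor1} carry out the ``careful repeated integration by parts'' you only describe; the ``simultaneous cancellations'' you allude to amount precisely to the observation that this choice of $m,d$ makes the coefficients $\alpha,\beta$ of the lower-order bulk terms vanish, leaving only the traceless-Hessian and Ricci integrals plus cutoff remainders. The closing step via the Tashiro-type classification is likewise the paper's Lemma~\ref{lem5}.

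However, the final rigidity step as you state it is wrong. You assert that ``$\operatorname{Ric}\ge 0$ and $\operatorname{Ric}(\nabla v,\nabla v)=0$ together rule out every case except the flat Euclidean space.'' This is false. On a direct product $(\mathbb{R}\times N^{n-1},\,dr^2+g_N)$ with $\operatorname{Ric}_N\ge0$, the function $v=ar+b$ ($a\neq0$) satisfies $\mathring{\nabla}^2 v\equiv0$ and $\operatorname{Ric}(\nabla v,\nabla v)=a^2\operatorname{Ric}(\partial_r,\partial_r)=0$, yet $M$ need not be Euclidean; this is exactly case (i) of Lemma~\ref{lem5} and both your vanishing conditions hold there. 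What actually excludes the cylinder is the first-order identity you derived but then applied only later: $v\,\Delta v-\tfrac n2|\nabla v|^2=\tfrac{2}{n-2}>0$, together with $u>0$, forces $\Delta v>0$ everywhere, whereas on the cylinder $v=ar+b$ gives $\Delta v=0$. The paper rules out case (i) by precisely this observation, and you should invoke your identity at that point rather than only to fix constants on $\mathbb{R}^n$. A secondary, fixable point: when you estimate the cutoff remainder ``in terms of $|\nabla u|^2$ and $u^{2n/(n-2)}$'', one of the terms that actually appears is $\int u^{-2/(n-2)}|\nabla u|^2|\nabla\psi|^2$, carrying a \emph{negative} power of $u$; to see it is controlled you need the lower bound $u\ge A\,r^{-(n-2)}$ outside a compact set (the paper's Lemma~\ref{lem2}, from superharmonicity and Laplacian comparison), which gives $u^{-2/(n-2)}|\nabla\psi|^2\le C$ on the annulus and reduces the remainder to $\int_{B_\rho^c}|\nabla u|^2\to0$. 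Finite energy alone does not make this term visibly small.
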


An immediate consequence of this result is the following

\begin{corollary}\label{c-sobmin} A complete, noncompact, nonflat, $n$-dimensional, $n\geq 3$, Riemannian manifold with nonnegative Ricci curvature does not admit any Sobolev minimizer of $S_g(M)$.
\end{corollary}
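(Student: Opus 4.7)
The plan is to argue by contradiction, reducing matters directly to Theorem \ref{t-crik1}. Suppose that $(M^n,g)$ admits a Sobolev minimizer, i.e.\ a function $u_0 \in D^{1,2}(M)$ with $u_0 \not\equiv 0$ attaining the infimum $S_g(M)$. Since $|\nabla |u_0||=|\nabla u_0|$ almost everywhere, the function $|u_0|$ is also a minimizer, so without loss of generality I may assume $u_0 \ge 0$ on $M$.

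Next, I would write the Euler--Lagrange equation associated to the constrained minimization problem, obtaining in the weak sense
$$
-\Delta u_0 = \lambda\, u_0^{\frac{n+2}{n-2}}\qquad\text{in } M,
$$
for some Lagrange multiplier $\lambda>0$ (proportional to $S_g(M)$, which is in particular strictly positive whenever it is attained). A standard Brezis--Kato / Moser iteration yields $u_0 \in L^\infty_{\mathrm{loc}}(M)$, and Schauder estimates then upgrade $u_0$ to $C^2(M)$. A positive rescaling $u := c\, u_0$, with $c := \lambda^{(n-2)/4}$, produces a nonnegative finite energy $C^2$ solution of the critical equation
$$
-\Delta u = u^{\frac{n+2}{n-2}}\qquad\text{in } M.
$$

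At this stage Theorem \ref{t-crik1} leaves only two alternatives: either $u\equiv 0$, which contradicts $u_0\not\equiv 0$; or $(M^n,g)$ is isometric to $\RR^n$ with its Euclidean metric, which contradicts the nonflatness hypothesis. In either case we reach a contradiction, and the corollary follows. There is no substantive obstacle in this deduction: once the hard classification encoded in Theorem \ref{t-crik1} is in hand, the corollary is an essentially immediate consequence of the fact that a Sobolev extremal, after standard regularization and rescaling, solves the critical equation \eqref{eq-cp} with finite $D^{1,2}$--energy.
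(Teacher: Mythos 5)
Your proposal is correct and coincides with the route the paper intends: the corollary is stated as an immediate consequence of Theorem \ref{t-crik1}, obtained precisely by passing to $|u_0|$, writing the Euler--Lagrange equation, regularizing, and rescaling away the positive Lagrange multiplier to land on \eqref{eq-cp}, whence either $u\equiv 0$ or $(M,g)$ is flat Euclidean space, contradicting the hypotheses. (The paper also sketches an alternative proof via Brendle's Sobolev inequality and Carron's volume growth result, but that is presented as a second route, not the primary one.)
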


In particular, if the manifold is also Ricci flat it does not admit any Yamabe minimizer, i.e. a smooth function attaining the Yamabe constant
$$
Y(M,[g])=\inf_{0\not\equiv u\in C^\infty_0(M)}\frac{\int_M |\nabla u|^2\,dV_g+\tfrac{n-2}{4(n-1)}\int_M R_g\,u^2\,dV_g}{\left(\int_M u^{\frac{2n}{n-2}}\,dV_g\right)^{\frac{n-2}{n}}}.
$$
An alternative proof of Corollary \ref{c-sobmin} can be recovered using a recent result by Brendle \cite{bre}. In fact, if $S_g(M)=0$, then clearly it cannot be attained by any function in $D^{1,2}(M)$. If $S_g(M)>0$, then $(M^n,g)$ supports the Sobolev inequality and $S_g(M)$ is the best constant. Hence, by a result of Carron \cite{car}, $(M^n,g)$ has maximal volume growth, i.e. there exists $C>0$ such that
$$
\operatorname{Vol}_g B_\rho(x_0)\geq C \rho^n
$$
for every $x_0\in M$, $\rho>0$ and then one concludes using \cite{bre}.

\medskip

A result similar to Corollary \ref{c-sobmin} in the setting of Cartan-Hadamard manifolds (simply connected manifolds with nonpositive sectional curvature) has been recently obtained assuming the validity of an optimal isoperimetric inequality on $M$ (see \cite{mursoa}). We also point out a recent result obtained by Ciraolo, Figalli and Roncoroni \cite{cirfigron} concerning the classification of finite energy solutions to the critical (anisotropic) $p$-Laplace equation on convex cones of $\RR^n$, obtained via integral estimates and sharp a priori bounds.

\medskip

In the second theorem we consider the case $n\geq 3$ and solutions which may not have finite energy, but satisfy a suitable condition at infinity.

\begin{theorem}\label{t-cri2k1} Let $(M^n,g)$, $n\geq 3$, be a complete noncompact Riemannian manifold with nonnegative Ricci curvature and let $u\in C^{2}(M)$ be a nonnegative solution of
$$
-\Delta u = u^{\frac{n+2}{n-2}}\quad\text{in }M.
$$
If $n\geq4$ also assume that, outside a compact set of $M$,
$$u(x)\leq Cr(x)^{\alpha}\quad\text{for some }\alpha<-\tfrac{(n-2)(n-6)}{2(n-4)}.$$
Then either $u\equiv 0$ on $M$ or $(M^n,g)$ is isometric to $\RR^n$ with the Euclidean metric and
$$
u(x)=\left(\frac{1}{a + b|x-x_0|^2}\right)^{\frac{n-2}{2}}
$$
for some $a,b>0$ with $1=n(n-2)ab$ and $x_0\in\RR^n$.
\end{theorem}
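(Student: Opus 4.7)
The plan is a Bochner-type test-functions argument on a carefully chosen power of $u$. Since $u\not\equiv 0$ implies $u>0$ everywhere by the strong maximum principle, set
$$v := u^{-2/(n-2)}.$$
This choice is suggested by the fact that each Aubin--Talenti bubble $(a+b|x|^2)^{-(n-2)/2}$ corresponds under it to the quadratic $a+b|x|^2$, whose traceless Hessian vanishes identically. A direct computation using $-\Delta u = u^{(n+2)/(n-2)}$ yields the clean equation
$$v\,\Delta v \;=\; \frac{2}{n-2} + \frac{n}{2}\,|\nabla v|^2.$$

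Next, apply Bochner's formula to $v$, differentiate the above equation and pair it with $\nabla v$ to rewrite $\langle\nabla v,\nabla\Delta v\rangle$, and combine the results to obtain a pointwise identity of the schematic form
$$v\,|\mathring{\nabla}^2 v|^2 + v\,\operatorname{Ric}(\nabla v,\nabla v) \;=\; \operatorname{div}(X) + F(v,|\nabla v|^2),$$
where, after multiplying by an appropriate power of $v$, the algebraic remainder $F$ itself becomes a divergence. I integrate this identity against a radial cutoff $\eta_R(x) = \chi(r(x)/R)$, with $\chi$ a standard Lipschitz bump. The Laplacian comparison theorem (which uses $\operatorname{Ric}\geq 0$) and the Bishop--Gromov bound $\operatorname{Vol}(B_R)\leq CR^n$ reduce the resulting boundary terms to integrals of specific powers of $u$ over the annulus $B_{2R}\setminus B_R$, weighted by negative powers of $R$. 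Sending $R\to\infty$, the growth hypothesis $u(x)\leq Cr(x)^\alpha$ with $\alpha<-\frac{(n-2)(n-6)}{2(n-4)}$ is precisely the threshold at which these boundary integrals vanish; in dimension $n=3$ the homogeneities balance on their own and no hypothesis is required. The limit identity reads
$$\int_M \bigl(v\,|\mathring{\nabla}^2 v|^2 + v\,\operatorname{Ric}(\nabla v,\nabla v)\bigr)\,dV_g \;\leq\; 0,$$
and since $v>0$ and both integrands are nonnegative, I conclude $\mathring{\nabla}^2 v\equiv 0$ and $\operatorname{Ric}(\nabla v,\nabla v)\equiv 0$ on $M$.

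The identity $\mathring{\nabla}^2 v\equiv 0$ says that $\nabla v$ is a global conformal gradient vector field. By a Tashiro-type classification of complete Riemannian manifolds carrying such a field, together with noncompactness, $\operatorname{Ric}\geq 0$, and the vanishing of $\operatorname{Ric}(\nabla v,\nabla v)$, $(M,g)$ is forced to be isometric to flat $\RR^n$. Under this isometry, $\nabla^2 v = \frac{\Delta v}{n}\,g$ combined with $v\,\Delta v = \frac{2}{n-2}+\frac{n}{2}|\nabla v|^2$ forces $v$ to be a quadratic polynomial $a+b|x-x_0|^2$ with $1=n(n-2)ab$, and inverting $u=v^{-(n-2)/2}$ gives the claimed expression for $u$. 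The main obstacle I anticipate is the second step: choosing multipliers and performing the integration by parts so that the boundary contributions are exactly the integrals of powers of $u$ controlled by the stated growth, and verifying that $-\frac{(n-2)(n-6)}{2(n-4)}$ emerges as the sharp threshold from matching homogeneity against $n$-dimensional volume growth.
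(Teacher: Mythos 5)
Your plan is, in outline, exactly the route the paper takes: pass to $v=u^{-2/(n-2)}$ (which linearizes the Aubin--Talenti bubbles), exploit the Bochner formula to produce $\lvert\mathring{\nabla}^2 v\rvert^2+\operatorname{Ric}(\nabla v,\nabla v)$ on one side and integrated-by-parts terms on the other, run cutoffs through Bishop--Gromov and Laplacian comparison, and finish with the Tashiro-type classification. Your computed identity $v\,\Delta v=\tfrac{2}{n-2}+\tfrac n2\lvert\nabla v\rvert^2$ is correct, and it is indeed the pointwise statement $\Delta v>0$ that rules out the split-cylinder alternative in the classification and leaves $\RR^n$ with $v$ quadratic.

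There is, however, a concrete gap where you gloss over the boundary estimate. After integrating the Bochner identity against a cutoff, the error terms you need to kill are \emph{not} pure powers of $u$ over annuli: the dominant one is of the form
\begin{equation}
\int_{B_{2R}\setminus B_R}\psi^{q-2}\,u^{-\frac{2+\gamma}{n-2}}\,\lvert\nabla u\rvert^2\,\lvert\nabla\psi\rvert^2,
\end{equation}
and no pointwise growth assumption on $u$ alone controls $\lvert\nabla u\rvert$. The paper's proof resolves this with an auxiliary Caccioppoli-type estimate: multiply the equation by $\psi^{q-2}u^{\frac{n-4-\gamma}{n-2}}$, integrate by parts, absorb the cross term by Young's inequality, and thereby convert the gradient integral into an integral of a power of $u$ (and of $K$) over the annulus. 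The free exponent $\gamma$ must then be optimized over the open interval $(n-4,2n-6)$ -- the constraint that makes the sign of the gradient coefficient and the Young absorption work -- and the sharp threshold $\alpha<-\tfrac{(n-2)(n-6)}{2(n-4)}$ is precisely $\lambda(n-4)<0$ in this optimization, with the companion endpoint $\lambda(2n-6)<0$ producing the alternative threshold $-\tfrac{(n-2)(n-4)}{2(n-3)}$. For $n=3$ the whole step collapses ($\gamma=0$, $q=4$ works directly), which is why no growth hypothesis is needed there, as you correctly anticipated. So the approach is the right one, but the ``main obstacle'' you flagged is a genuine missing ingredient rather than routine bookkeeping: you need the intermediate integral gradient estimate from the equation, and the parameter optimization, to make the boundary terms actually decay under the stated growth bound.
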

We explicitly note that no assumption on the behavior of $u$ at $\infty$ is needed in Theorem \ref{t-cri2k1} if $n=3$. In particular on $\mathbb{R}^3$ we recover the full result by Caffarelli, Gidas and Spruck \cite{cafgidspr}. Moreover, we have the following

\begin{corollary} A complete, noncompact, nonflat, three-dimensional Riemannian manifold with nonnegative Ricci curvature does not admit any nonnegative, nontrivial solution of the critical equation
$$
-\Delta u = u^{5}.
$$
\end{corollary}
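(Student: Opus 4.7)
The plan is extremely short because this corollary is designed as an immediate consequence of Theorem \ref{t-cri2k1} in the case $n=3$. Nothing new has to be proven; one simply needs to invoke the theorem and use the nonflatness hypothesis to rule out the exceptional case.

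More precisely, I would argue by contradiction. Suppose $(M^3,g)$ is a complete noncompact nonflat three-dimensional Riemannian manifold with $\mathrm{Ric}\geq 0$ which admits a nonnegative, nontrivial $C^2$ solution $u$ of $-\Delta u = u^5$. Then I apply Theorem \ref{t-cri2k1} directly: since $n=3$, the dichotomy applies without any decay assumption at infinity (the hypothesis $u(x)\leq C r(x)^\alpha$ is only needed for $n\geq 4$, as emphasized in the paragraph following that theorem). The dichotomy then forces either $u\equiv 0$, which contradicts the assumption of nontriviality, or $(M^3,g)$ isometric to Euclidean $\mathbb{R}^3$, which contradicts the nonflatness assumption. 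Either way we reach a contradiction, and the corollary follows.

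Since the whole argument is a one-line deduction from Theorem \ref{t-cri2k1}, there is no genuine obstacle; the only ``work'' is recognizing that in dimension three the classification in Theorem \ref{t-cri2k1} is unconditional, so the nonflatness of $M$ is the only additional ingredient needed to pass from the dichotomy to outright nonexistence.
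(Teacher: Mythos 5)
Your argument is exactly the one the paper intends: the corollary is an immediate consequence of Theorem \ref{t-cri2k1}, noting that for $n=3$ no decay hypothesis is required, so the dichotomy combined with nontriviality and nonflatness yields a contradiction. This matches the paper's presentation, which states the corollary without separate proof as a direct consequence of that theorem.
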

In case $n=4$, Theorem \ref{t-cri2k1} only needs that $u$ is bounded above by $r(x)$ to any power, i.e. $u$ has at most algebraic growth (with respect to the distance function), as $r(x)$ tends to $\infty$.
We also note that $\alpha>-\frac{n-2}{2}$ for every $n\geq4$, thus improving the classical results in $\mathbb{R}^n$ by Gidas-Ni-Nirenberg\cite{gidninir} and Obata \cite{oba} in any dimension $n\geq3$, where the authors assume that the solution decays as $|x|^{-(n-2)}$ at infinity.

\medskip

In our third theorem, we deal with the case $n=2$. We have the following result

\begin{theorem}\label{t-cri3k1} Let $(M^2,g)$ be a complete noncompact Riemannian surface with nonnegative scalar curvature. Let $u\in C^{2}(M)$ be a solution of
$$\begin{cases}-\Delta u= e^u \\ \int_M  e^u <+\infty \end{cases} $$
Assume that, outside a compact set of $M$,
$$
u(x)\geq -4\log r(x)-2\gamma\log\log r(x),\quad \gamma\in[0,1).
$$
Then $(M^2,g)$ is isometric to $\RR^2$ with the Euclidean metric and
$$
u(x)=\log\frac{1}{(a+b|x-x_0|^2)^2}
$$
for some $a,b>0$ with $1=8ab$ and $x_0\in\RR^2$.
\end{theorem}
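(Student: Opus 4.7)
The plan is to recast the equation as a rigid structural identity for a transformed unknown, and then extract geometric rigidity via a Bochner-type weighted test-function argument. Set $f=e^{-u/2}$, so that $u=-2\log f$; a direct computation shows that $-\Delta u=e^u$ is equivalent to
$$ f\Delta f-|\nabla f|^2=\tfrac{1}{2}.$$
Under this substitution $\int_M e^u\,dV_g<\infty$ becomes $\int_M f^{-2}\,dV_g<\infty$, the lower bound on $u$ becomes the upper bound $f(x)\leq C r(x)^{2}(\log r(x))^{\gamma}$ outside a compact set, and the conjectured extremal $u(x)=\log((a+b|x-x_0|^2)^{-2})$ corresponds to $f(x)=a+b|x-x_0|^2$ with $8ab=1$.

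Differentiating the structural identity and pairing with $\nabla f$ yields $\langle\nabla f,\nabla\Delta f\rangle=\tfrac{2}{f}\mathring{\nabla}^2 f(\nabla f,\nabla f)$. Substituting into Bochner on $(M^2,g)$, where $\operatorname{Ric}=Kg$ with $K\geq 0$, and using $|\nabla^2 f|^2=|\mathring{\nabla}^2 f|^2+(\Delta f)^2/2$, gives
$$\tfrac{1}{2}\Delta|\nabla f|^2=|\mathring{\nabla}^2 f|^2+\tfrac{(\Delta f)^2}{2}+K|\nabla f|^2+\tfrac{2}{f}\mathring{\nabla}^2 f(\nabla f,\nabla f).$$
Using the sharp two-dimensional bound $|\mathring{\nabla}^2 f(\nabla f,\nabla f)|\leq\tfrac{1}{\sqrt{2}}|\mathring{\nabla}^2 f||\nabla f|^2$, completing the square in the cross term, and substituting $(\Delta f)^2=(\tfrac{1}{2}+|\nabla f|^2)^2/f^2$, the $|\nabla f|^4/f^2$ pieces cancel and one arrives at the pointwise inequality
$$\tfrac{1}{2}\Delta|\nabla f|^2\geq\Bigl(|\mathring{\nabla}^2 f|-\tfrac{|\nabla f|^2}{\sqrt{2}\,f}\Bigr)^2+\tfrac{1}{8f^2}+\tfrac{|\nabla f|^2}{2f^2}+K|\nabla f|^2,$$
which is an equality on the Euclidean bubble.

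The heart of the proof is a weighted test-function argument: multiply the Bochner identity by $\eta_R^2 f^{-\beta}$, with $\eta_R$ a standard smooth cutoff supported in $B_{2R}$ and equal to $1$ on $B_R$, and $\beta>0$ chosen so that each of the four nonnegative terms on the RHS is integrable against $\eta_R^2 f^{-\beta}$ globally, using $\int f^{-2}\,dV_g<\infty$ together with the polynomial control on $f$. Integration by parts and Cauchy--Schwarz, together with reabsorbing the $|\nabla^2 f|^2$ contribution into the LHS, reduce the cutoff errors to annular integrals of schematic form $\int_{B_{2R}\setminus B_R}|\nabla\eta_R|^2 f^{-\beta}|\nabla f|^2\,dV_g$. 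The growth bound $f\leq C r^2(\log r)^\gamma$ with $\gamma<1$ forces these to vanish as $R\to\infty$ for a suitable $\beta$; the value $\gamma=1$ is the sharp threshold, as it corresponds exactly to the quadratic growth of $f$ on the bubble. Passing to the limit yields $\mathring{\nabla}^2 f\equiv 0$ and $K|\nabla f|^2\equiv 0$ on $M$.

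From $\nabla^2 f=(\Delta f/2)g$, the field $\nabla f$ is a closed conformal gradient vector field, and $f$ is nonconstant (otherwise $u$ is constant, contradicting $-\Delta u=e^u>0$). By the classical theorem of Tashiro on complete Riemannian manifolds admitting a nonconstant function with pointwise conformal Hessian, applied in the noncompact $K\geq 0$ setting, $(M^2,g)$ is isometric to $(\mathbb{R}^2,\delta)$ and $f$ is a radial function of the Euclidean distance from some $x_0\in\mathbb{R}^2$. Substituting a general radial $f=f(r)$ into $f\Delta f-|\nabla f|^2=\tfrac{1}{2}$ produces an ODE whose only positive solutions are $f=a+b|x-x_0|^2$ with $8ab=1$, yielding the stated form of $u$. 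The main obstacle is the weighted cutoff step, where the exponent $\beta$ and the growth parameter $\gamma$ must be tuned together so that the annular errors vanish in the limit precisely at the borderline $\gamma<1$.
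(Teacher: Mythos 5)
Your setup is sound: the substitution $f=e^{-u/2}$, the structural identity $f\Delta f-|\nabla f|^2=\tfrac12$, the derived identity $\langle\nabla f,\nabla\Delta f\rangle=\tfrac{2}{f}\mathring\nabla^2 f(\nabla f,\nabla f)$, and the final appeal to a Tashiro/conformal--vector--field rigidity result are all correct and in the same spirit as the paper's argument. The problem lies in the central step: the pointwise inequality you derive cannot possibly yield the classification.

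After Cauchy--Schwarz and completing the square you arrive at
$$
\tfrac12\Delta|\nabla f|^2\ \geq\ \Bigl(|\mathring\nabla^2 f|-\tfrac{|\nabla f|^2}{\sqrt2\,f}\Bigr)^2+\tfrac{1}{8f^2}+\tfrac{|\nabla f|^2}{2f^2}+\tfrac{R}{2}|\nabla f|^2.
$$
Two separate difficulties kill the integrated argument built on this. First, the squared term vanishes when $|\mathring\nabla^2 f|=\tfrac{|\nabla f|^2}{\sqrt2 f}$, \emph{not} when $\mathring\nabla^2 f=0$; so even if you forced the RHS to vanish after integration you would not get $\mathring\nabla^2 f\equiv 0$. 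Second and more fatal, the term $\tfrac{1}{8f^2}$ is pointwise positive and globally integrable (indeed $\int_M f^{-2}=\int_M e^u<\infty$). If, as you assert, the cutoff errors vanish in the limit, you would conclude $\int_M f^{-\beta-2}\leq 0$, an absurdity --- the argument would then ``prove'' that the Euclidean bubble does not exist. The cancellation that must save the day is lost the moment you throw away the sign of $\mathring\nabla^2 f(\nabla f,\nabla f)$ via Cauchy--Schwarz.

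What actually happens, and what the paper exploits, is that the term $\tfrac{1}{8f^2}$ is \emph{not} isolatedly controllable; only a specific linear combination of it with $\tfrac{|\nabla f|^2}{f^2}$ is a pure divergence. Concretely, using $f\Delta f-|\nabla f|^2=\tfrac12$ and one integration by parts one finds
$$
\tfrac18\int\psi^2 f^{-3}-\tfrac14\int\psi^2 f^{-3}|\nabla f|^2\ =\ -\tfrac14\int f^{-2}\langle\nabla\psi^2,\nabla f\rangle,
$$
i.e.\ an annular term, and similar cancellations kill the $|\nabla f|^4/f^2$ contribution. To see this you must keep the cross term $\tfrac{2}{f}\mathring\nabla^2 f(\nabla f,\nabla f)$ exactly, multiply the Bochner identity by $\psi^2 f^{-1}$ (the weight is forced; ``tuning $\beta$'' does not work, $\beta=1$ is the unique choice giving the cancellation), integrate by parts the cross term using $\mathring\nabla^2 f(\nabla f,\nabla f)=\tfrac12\langle\nabla f,\nabla|\nabla f|^2\rangle-\tfrac{\Delta f}{2}|\nabla f|^2$, and reinsert the structural identity. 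After this, every non-annular term except $\int\psi^2 f^{-1}|\mathring\nabla^2 f|^2$ and $\tfrac12\int\psi^2 f^{-1}R|\nabla f|^2$ drops out, and one obtains precisely the identity stated in the paper's Corollary~\ref{cor2},
$$
8\int_M\psi^2\,\frac{|\mathring\nabla^2 f|^2}{f}+4\int_M\psi^2\,\frac{R|\nabla f|^2}{f}=2\int_M\frac{\Delta\psi^2}{f}+4\int_M\frac{|\nabla f|^2}{f}\Delta\psi^2,
$$
with a RHS that is purely annular. This is the structure you need; the pointwise Cauchy--Schwarz detour discards it irretrievably. The rest of your sketch (cutoffs at scale $\log r/\log\rho$, the role of $\gamma<1$, the conclusion via conformal rigidity and the ODE for radial $f$) is consistent with the paper once the correct integral identity is in hand, but as written the crucial step does not close.
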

In contrast to the Euclidean case, on a general Riemannian surface with nonnegative curvature, to the best of our knowledge, there is no result concerning the behavior of a solution $u$ at infinity. For this reason we have to assume an a priori lower bound. A better lower bound implying our condition was proved on $\RR^2$ by Chen and Li \cite{cheli2}, relying on \cite{bremer} and the explicit expression of the Green's function. In particular Theorem \ref{t-cri3k1} generalizes the result obtained in $\RR^2$ by Chen and Li \cite{cheli}.

\medskip

Our previous results are particular instances of more general theorems where we can allow for the presence of a nonnegative potential function $K$. The following theorem contains Theorem \ref{t-subk1} as a particular case.
\begin{theorem}\label{t-sub} Let $(M^n,g)$, $n\geq 2$, be a  complete noncompact Riemannian manifold with nonnegative Ricci curvature and let $u\in C^{2}(M)$ be a nonnegative solution of
$$
-\Delta u =K u^p\qquad\text{in }M
$$
with $1<p<p_c$, where
$$
p_c:=\begin{cases}+\infty &\text{if }n=2\\\frac{n+2}{n-2} &\text{if }n\geq 3\end{cases}
$$
and $0\not\equiv K\in C^2(M)$, $K\geq 0$. Moreover, if $n\geq 3$, we also assume $\Delta K\geq0$ on $M$, and if $n\geq 4$, we also assume $K(x)\geq \frac{C}{r(x)^\sigma}$ outside a compact set of $M$ for some $C>0$, $\sigma<\frac{2}{n-3}$. Then $$u\equiv 0\quad\text{on }M.$$
\end{theorem}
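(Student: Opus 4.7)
The plan is to apply the classical Bochner formula to $v = u^\beta$ for a carefully chosen exponent $\beta = \beta(n,p) > 0$, then integrate the resulting inequality against a power of a standard radial cutoff $\phi = \phi_R \in C^\infty_c(M)$ with $\phi \equiv 1$ on $B_R$, $\mathrm{supp}\,\phi \subset B_{2R}$, $|\nabla \phi| \leq C/R$, $|\nabla^2 \phi| \leq C/R^2$. Using the equation $-\Delta u = K u^p$, one computes
\[
\Delta v = -\beta K u^{\beta+p-1} + \beta(\beta-1)\, u^{\beta-2}|\nabla u|^2
\]
and an analogous explicit expression for $\langle \nabla v,\nabla \Delta v\rangle$ involving $u$, $|\nabla u|^2$, $K$ and $\nabla K$. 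The Bochner identity for $v$, combined with the pointwise refinement $|\nabla^2 v|^2 = |\mathring{\nabla}^2 v|^2 + (\Delta v)^2/n$ and the hypothesis $\mathrm{Ric}(\nabla v,\nabla v) \geq 0$, yields an inequality that, after integration against $\phi^{2m}$ and two integrations by parts on the left-hand side, collapses into a weighted integral identity whose main term is $\int_M K u^{p+q(\beta)} \phi^{2m}$ for some $q(\beta) > 0$.

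In the subcritical range $1 < p < p_c$, one then chooses $\beta$ and $m$ so that, modulo the Ricci and traceless-Hessian contributions (both nonnegative and thus droppable), the identity reduces to the schematic form
\[
A \int_M K u^{p+q} \phi^{2m} \;\leq\; \mathcal{R}_\phi + \mathcal{R}_K,
\]
where $A > 0$, $\mathcal{R}_\phi$ collects the remainders supported in the annulus $B_{2R}\setminus B_R$ (each weighted by a negative power of $R$ coming from derivatives of $\phi$), and $\mathcal{R}_K$ collects the terms involving $\nabla K$. In dimension $n = 2$ the subcritical range $p < \infty$ is wide enough to close the argument directly by Hölder's inequality, with no assumption on $K$ beyond its nonnegativity. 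In dimension $n \geq 3$, I would first integrate by parts in $\mathcal{R}_K$ to transform $\nabla K$ contributions into a $\Delta K$ contribution, whose favorable sign $\Delta K \geq 0$ allows this term to be dropped from the right-hand side.

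The remaining term $\mathcal{R}_\phi$ is then estimated by Hölder's inequality against the main integral, combined with the Bishop--Gromov volume bound $\mathrm{Vol}_g(B_R) \leq C R^n$ (valid under $\mathrm{Ric}\geq 0$). In dimension $n = 3$ this closes directly; in dimensions $n \geq 4$, however, the resulting Hölder estimate is borderline and requires an extra weight supplied by the lower bound $K(x) \geq C\, r(x)^{-\sigma}$ with $\sigma < 2/(n-3)$, which improves the decay on annuli just enough to close the iteration. Sending $R \to \infty$ yields $\int_M K u^{p+q} = 0$, hence $u \equiv 0$ on $\{K > 0\}$; on the complementary open set $\{K = 0\}$ the function $u$ is nonnegative and harmonic, vanishes on the boundary, so by the strong maximum principle and connectedness of $M$ one concludes $u \equiv 0$ on all of $M$.

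The main obstacle is the algebraic book-keeping that determines the admissible range of $\beta$ and of the cutoff power $m$: the strict positivity of the constant $A$ in front of the main integral holds precisely in the open interval $1 < p < p_c$ and degenerates exactly at $p = p_c$, which is what makes the method sharp and also what forces us out of the critical case. A secondary subtlety, specific to $n \geq 4$, is the matching between the exponent $\sigma < 2/(n-3)$ and the polynomial volume bound: this inequality is precisely the threshold at which the weighted Hölder estimate on $B_{2R}\setminus B_R$ still vanishes as $R \to \infty$, and any weakening of either the algebraic choice of $\beta$ or the growth condition on $K^{-1}$ would leave an uncontrolled boundary term.
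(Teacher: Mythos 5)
Your proposal for $n\geq 3$ tracks the paper's argument closely: the paper's Lemma~\ref{l-BV} and Corollary~\ref{cor3} are exactly the Bochner computation for a power of $u$, integrated against $\psi^q u^d$, with the traceless Hessian and Ricci terms dropped by sign; the integration by parts converting $\nabla K$ into a $\Delta K$ of favorable sign is the same step; and the balance between Bishop--Gromov growth and the cutoff powers is what produces the threshold $\sigma < \tfrac{2}{n-3}$ for $n\geq 4$. For $n\geq 3$ the approach is essentially the paper's.

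The gap is in $n=2$, where your claim that the Bochner route ``closes directly by H\"older's inequality'' for all $1<p<\infty$ with no assumption on $K$ beyond nonnegativity does not hold up. In Corollary~\ref{cor3} both coefficients $\alpha = \tfrac{2(2-m)d-(m^2+d^2)}{8}$ and $\beta = d-\tfrac{p}{2}$ must be strictly positive for the argument to absorb the remainders. The only choice that avoids a power $K^{-\frac{m+2}{p-1}}$ in the final estimate --- and hence avoids needing a lower or upper bound on $K$, neither of which is assumed when $n=2$ --- is $m=-2$. But then $\alpha>0$ forces $d\in\bigl(4-2\sqrt{3},\,4+2\sqrt{3}\bigr)$ while $\beta>0$ forces $d>p/2$, so the argument only closes for $p<8+4\sqrt{3}\approx 14.9$. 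Any $m\neq -2$ reintroduces a $K$-power that cannot be controlled from the hypotheses. The paper sidesteps this entirely in $n=2$: it does not use Bochner at all but instead multiplies the equation by $\psi^2 u^\delta$ with $\delta\in(-p,-1)$, uses the lower bound $u\geq A$ from Lemma~\ref{lem2}, a logarithmic cutoff, and Bishop--Gromov (essentially the parabolicity of a surface with nonnegative curvature) to conclude for the full range $1<p<\infty$. You need a separate mechanism of this kind for $n=2$; as written your plan does not cover that case.

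Two small remarks. Your endgame (kill $u$ on $\{K>0\}$, then use strong maximum principle) is a valid variant of the paper's, which instead assumes $u>0$ everywhere from the start and derives a contradiction. Also, you write $v=u^\beta$ with $\beta>0$; in the paper the relevant power $\tfrac{m+2-d}{2}$ is typically negative, but this is cosmetic and does not affect the structure.
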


Theorem \ref{t-sub} improves the results by Gidas and Spruck \cite[Theorems 4.1, 6.1]{gidspr}, removing an assumption on $K$ and including the case $n=2$. Moreover, as it is clear from the proof, when $n\geq 4$ the lower bound on $K$ can be relaxed to
$$
K(x)\geq \frac{C}{r(x)^\sigma}\quad\text{with }\sigma<\sigma^*(n,p)
$$
for some explicit exponent $\sigma^*(n,p)\geq \frac{2}{n-3}$, see \eqref{eq-sigma}.

\medskip

The next two theorems extend Theorem \ref{t-crik1} and Theorem \ref{t-cri2k1}, respectively. In the first one we consider finite energy solutions, i.e. such that
$$
u\in D^{1,2}_K(M):=\left\{u: \,K\,u^{\frac{2n}{n-2}}\in L^1(M),\, |\nabla u|\in L^2(M)\right\},
$$
while in the second we deal with solutions with prescribed behavior at infinity.

\begin{theorem}\label{t-cri} Let $(M^n,g)$, $n\geq 3$, be a  complete noncompact Riemannian manifold with nonnegative Ricci curvature and let $u\in C^{2}(M)$ be a nonnegative finite energy solution of
$$
-\Delta u =K u^{\frac{n+2}{n-2}}\qquad\text{in }M
$$
with $0\not\equiv K\in C^2(M)$, $K\geq 0$ and $\Delta K\geq0$. Assume that, outside a compact set of $M$,
$$
(i)\,K(x)\leq C(1+r(x)^2),\qquad\text{or}\qquad (ii)\,|\nabla K(x)|\leq \frac{C}{r(x)}K(x).
$$
Then either $u\equiv 0$ on $M$ or $(M^n,g)$ is isometric to $\RR^n$ with the Euclidean metric and
$$
u(x)=\left(\frac{1}{a + b|x-x_0|^2}\right)^{\frac{n-2}{2}},\quad K\equiv n(n-2)ab
$$
for some $a,b>0$ and $x_0\in\RR^n$.
\end{theorem}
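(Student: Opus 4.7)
The plan is to adapt the Bochner/test-function strategy described in the introduction, with a substitution tailored to the critical exponent. I would set
$$
v := u^{-\frac{2}{n-2}},
$$
the motivation being that the model solution $u = (a+b|x-x_0|^2)^{-(n-2)/2}$ on $\RR^n$ corresponds to the quadratic polynomial $v = a+b|x-x_0|^2$, for which $\nabla^2 v = 2b\,g$ and hence $\mathring{\nabla}^2 v \equiv 0$. A direct chain-rule computation together with the equation $-\Delta u = K u^{(n+2)/(n-2)}$ rewrites $\Delta v$, and then $\nabla\Delta v$, entirely in terms of $v$, $\nabla v$, $K$, and $\nabla K$, so from that point on the problem is phrased in the variable $v$.

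The heart of the argument is to apply the Bochner formula
$$
\tfrac{1}{2}\Delta|\nabla v|^2 = |\mathring{\nabla}^2 v|^2 + \tfrac{(\Delta v)^2}{n} + \operatorname{Ric}(\nabla v,\nabla v) + \langle \nabla v,\nabla\Delta v\rangle
$$
after multiplication by a weight of the form $v^{s}\eta^{2m}$, where $\eta$ is a Lipschitz cutoff with $\eta\equiv 1$ on $B_R$, supported in $B_{2R}$, and $|\nabla\eta|\le C/R$, and where $s,m$ are to be selected so that, after repeatedly integrating by parts (using the equation each time a $\Delta v$ or $\nabla\Delta v$ appears), every non--sign-definite term either cancels or is concentrated in the cutoff region. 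The target is a schematic inequality
$$
A\!\!\int_M |\mathring{\nabla}^2 v|^2\,v^{s}\eta^{2m}\,dV_g + B\!\!\int_M \operatorname{Ric}(\nabla v,\nabla v)\,v^{s}\eta^{2m}\,dV_g + C\!\!\int_M \Delta K\,\Phi(v)\,\eta^{2m}\,dV_g\;\le\;\mathcal{R}(\eta),
$$
with positive constants $A,B,C$ depending only on $n$, a nonnegative function $\Phi$, and a remainder $\mathcal{R}(\eta)$ supported where $\nabla\eta\ne 0$.

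Once such an inequality is in place, the combination $\operatorname{Ric}\ge 0$, $\Delta K\ge 0$ and the sum-of-squares structure reduce the problem to checking $\mathcal{R}(\eta)\to 0$ as $R\to\infty$. Converting the boundary remainder back to $u$ produces schematically
$$
\frac{C}{R^{2}}\int_{B_{2R}\setminus B_R}\!\!\bigl(K u^{\frac{2n}{n-2}}+|\nabla u|^2\bigr)\,dV_g\;+\;\frac{C}{R}\int_{B_{2R}\setminus B_R}\!\!|\nabla K|\,u^{\frac{n+2}{n-2}}|\nabla u|\,dV_g,
$$
plus harmless lower order terms. The finite-energy hypothesis $u\in D^{1,2}_K(M)$ kills the first two integrals directly. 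The $|\nabla K|$ term is where the dichotomy appears: under (ii) I would substitute $|\nabla K|\le CK/r$ pointwise and reduce again to the energy; under (i) I would instead perform one further integration by parts, trading $|\nabla K|$ for $\Delta K$ (handled by the sign assumption) plus a new boundary piece in which $K\le C(1+r^{2})$ is finally used to absorb the quadratic growth against the finite energy and the Bishop--Gromov volume bound provided by $\operatorname{Ric}\ge 0$. I expect this bookkeeping --- tuning $s,m$ and the order of the integration by parts so that \emph{both} alternative conditions on $K$ yield admissible remainders --- to be the main technical obstacle of the proof.

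Having forced $|\mathring{\nabla}^2 v|\equiv 0$, $\operatorname{Ric}(\nabla v,\nabla v)\equiv 0$ and $\Delta K\,\Phi(v)\equiv 0$, the proof concludes by rigidity. Assuming $u\not\equiv 0$, the strong maximum principle yields $u>0$, hence $v$ is a smooth nonconstant function satisfying $\nabla^{2}v = (\Delta v/n)\,g$ on $M$. By the classical Tashiro characterization of such concircular scalar fields, combined with nonnegativity of Ricci and noncompactness, $(M^n,g)$ must be isometric to $\RR^n$ with the Euclidean metric, and $v$ reduces, after translation, to $v(x)=a+b|x-x_0|^{2}$ with $a,b>0$. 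Inverting the substitution recovers the Aubin--Talenti expression for $u$, and substituting back into $-\Delta u = K u^{(n+2)/(n-2)}$ forces $K\equiv n(n-2)ab$, completing the classification.
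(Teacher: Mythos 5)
Your substitution $v = u^{-2/(n-2)}$, the Bochner identity applied to $v$, the weighted test function, the discard of the nonnegative $\operatorname{Ric}$ and $\Delta K$ terms, and the rigidity conclusion via Tashiro-type classification of concircular fields: all of this is exactly the paper's scheme (Lemmas~\ref{l-BV}, \ref{l-key}, Corollary~\ref{cor1}, Lemma~\ref{lem1} and Lemma~\ref{lem5}). However, there are two genuine gaps in the plan as written.

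First, the remainder you display,
\begin{equation}
\frac{C}{R^{2}}\int_{B_{2R}\setminus B_R}\bigl(K u^{\frac{2n}{n-2}}+|\nabla u|^2\bigr) + \frac{C}{R}\int_{B_{2R}\setminus B_R}|\nabla K|\,u^{\frac{n+2}{n-2}}|\nabla u|,
\end{equation}
is already the \emph{simplified} form, and the simplification is not bookkeeping: the cutoff terms that actually come out of the Bochner identity with the weight $v^{s}\eta^{2m}$ carry an extra factor of $u^{-2/(n-2)}$ (e.g.\ $\int u^{-2/(n-2)}|\nabla u|^{2}|\nabla\eta|^{2}$ and $\int K u^{2(n-1)/(n-2)}|\Delta\eta|$). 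To absorb this factor against $|\nabla\eta|^{2}\le C/R^{2}$ you need a pointwise lower bound $u\ge A r^{-(n-2)}$ at infinity, which follows from superharmonicity of $u$ ($\Delta u = -Ku^{(n+2)/(n-2)}\le 0$) together with the Laplacian comparison $\Delta r \le (n-1)/r$ under $\operatorname{Ric}\ge 0$ (this is the paper's Lemma~\ref{lem2}). Without this ingredient --- which is a separate idea, not a consequence of the Bochner/cutoff machinery --- you cannot trade $u^{-2/(n-2)}|\nabla\eta|^{2}$ for a bounded quantity, and the finite-energy hypothesis alone does not close the argument. (The same lower bound is also what makes the dichotomy (i)/(ii) work: under (i) it is what turns $K^{2}u^{2n/(n-2)}|\nabla\eta|^{2}$ into $Ku^{2n/(n-2)}$ after Cauchy--Schwarz, and under (ii) it is used after an integration by parts that cancels the $\Delta\eta$ term against the $\langle\nabla u,\nabla\eta\rangle$ term, leaving a $\langle\nabla K,\nabla\eta\rangle$ piece.)

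Second, the rigidity step as you state it is incomplete: from $\mathring{\nabla}^2 v\equiv 0$ and $\operatorname{Ric}(\nabla v,\nabla v)\equiv 0$ on a complete noncompact manifold with $\operatorname{Ric}\ge 0$, the Tashiro-type classification gives \emph{two} alternatives, not one: either $(M,g)$ splits as a Riemannian product $\RR\times N^{n-1}$ with $v$ an affine function of the $\RR$-coordinate, or $(M,g)$ is Euclidean with $v$ a quadratic. You have to rule out the cylinder. This is done by noting that in the split case $\Delta v \equiv 0$, while on the other hand
\begin{equation}
\Delta v = \tfrac{2}{n-2}Ku^{\frac{2}{n-2}}+\tfrac{2n}{(n-2)^2}u^{-\frac{2(n-1)}{n-2}}|\nabla u|^2\ge 0,
\end{equation}
so $\Delta v\equiv 0$ would force $K\equiv 0$, contradicting $K\not\equiv 0$. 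Only after eliminating the split case can you conclude $M=\RR^n$, $v=a|x-x_0|^2+b$ with $a,b>0$ (by positivity of $u$), and then $K\equiv n(n-2)ab$ by substituting into the equation.
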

We note that, under suitable conditions on the potential $K$, assuming $K u^{\frac{2n}{n-2}}\in L^1(M)$ is sufficient  to deduce $u\in D^{1,2}_K(M)$, i.e. $u$ has finite energy. This is the case, in particular, if $K\equiv 1$. See Lemma \ref{lem3}.

\medskip

Similarly to Corollary \ref{c-sobmin}, we see that a complete, noncompact, nonflat, $n$-dimensional, $n\geq 3$, Riemannian manifold with nonnegative Ricci curvature does not admit any minimizer in $D^{1,2}_K(M)$ of the corresponding  weighted Sobolev quotient, with weight $K$ satisfying the assumptions of Theorem \ref{t-cri}.

\begin{theorem}\label{t-cri2} Let $(M^n,g)$, $n\geq 3$, be a  complete noncompact Riemannian manifold with nonnegative Ricci curvature and let $u\in C^{2}(M)$ be a nonnegative solution of
$$
-\Delta u =K u^{\frac{n+2}{n-2}}\qquad\text{in }M
$$
with $0\not\equiv K\in C^2(M)$, $K\geq 0$ and $\Delta K\geq0$. Assume that
$$|\nabla K(x)|\leq \frac{C}{r(x)}K(x)$$
outside a compact set of $M$. If $n\geq4$ also assume that
$$u\leq Cr^{\alpha}\quad\text{and}\quad K(x)\geq\frac{C}{r(x)^\sigma}$$
outside a compact set of $M$, for some
$$\alpha<\max\left\{-\tfrac{(n-2)(n-4)}{2(n-3)},-\tfrac{(n-2)^2\sigma+2(n-2)(n-6)}{4(n-4)}\right\}.$$
Then either $u\equiv 0$ on $M$ or $(M^n,g)$ is isometric to $\RR^n$ with the Euclidean metric and
$$
u(x)=\left(\frac{1}{a + b|x-x_0|^2}\right)^{\frac{n-2}{2}},\quad K\equiv n(n-2)ab
$$
for some $a,b>0$ and $x_0\in\RR^n$.
\end{theorem}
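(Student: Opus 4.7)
My plan follows the Bochner-test-function strategy described in the introduction, specialized to handle a non-constant potential $K$. I would begin by setting $v := u^{-2/(n-2)}$, the substitution attached to the conformal change $\tilde g = u^{4/(n-2)}g$, so that the equation $-\Delta u = K u^{(n+2)/(n-2)}$ rewrites as an explicit identity of the schematic form $\Delta v = \frac{2K}{(n-2)v} + \frac{n}{2}\frac{|\nabla v|^2}{v}$, i.e.\ $\Delta v$ is fully controlled by $K$, $v$, and $|\nabla v|^2$. Applying the Bochner formula to $v$,
$$\tfrac12 \Delta|\nabla v|^2 = |\mathring{\nabla}^2 v|^2 + \tfrac{(\Delta v)^2}{n} + \mathrm{Ric}(\nabla v,\nabla v) + \langle\nabla v,\nabla \Delta v\rangle,$$
one sees that the last term involves only $\nabla K$ together with first and second derivatives of $v$, so the entire identity can be rewritten as an expression in $v$, $K$, and their derivatives up to second order.

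Next, I would multiply this identity by a weight of the form $K^{a}v^{-b}\eta^{\beta}$, with $\eta = \eta_R$ a standard radial Lipschitz cutoff supported in $B_{2R}(x_0)$, equal to $1$ on $B_R(x_0)$ and satisfying $|\nabla\eta_R|\leq C/R$, $|\Delta\eta_R|\leq C/R^2$, and integrate by parts on $M$. The three hypotheses on $K$ play complementary roles: $\mathrm{Ric}\geq 0$ and $\Delta K\geq 0$ give favorable signs to the Ricci term and to a specific term produced by the integration by parts, while $|\nabla K|\leq (C/r)K$ allows every term containing $\nabla K$ to be absorbed into the controllable pointwise quantities. The outcome is an inequality of the schematic form
$$\int_M \bigl(|\mathring{\nabla}^2 v|^2 + \mathrm{Ric}(\nabla v,\nabla v)\bigr)\,K^{a}v^{-b}\eta_R^{\beta}\,dV_g \;\leq\; C\,\mathcal{E}(R),$$
where $\mathcal{E}(R)$ is an error supported in the annulus $\{R\leq r\leq 2R\}$ and carrying at least one derivative of $\eta_R$. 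Bounding $\mathcal{E}(R)$ with $u\leq Cr^{\alpha}$ and $K\geq Cr^{-\sigma}$ then yields a negative power of $R$ exactly when $\alpha$ lies below one of the two thresholds in the statement. The two thresholds correspond to two admissible choices of the pair $(a,b)$: the first does not exploit the lower bound on $K$ (hence is independent of $\sigma$), while the second trades growth of $u$ against the decay rate of $K$, which explains the $\max$ in the hypothesis.

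Letting $R\to\infty$ and using Fatou gives $\mathring{\nabla}^2 v\equiv 0$ and $\mathrm{Ric}(\nabla v,\nabla v)\equiv 0$ on the positivity set of $u$. If $u\not\equiv 0$, the strong maximum principle forces $u>0$ on all of $M$, so $v\in C^{2}(M)$ is positive and $\nabla^{2}v = \varphi\,g$ for some $\varphi\in C^{1}(M)$: $\nabla v$ is a nontrivial closed conformal gradient field. Since $(M,g)$ is complete, noncompact, and $\mathrm{Ric}\geq 0$, the classical Tashiro characterization of such fields forces $(M,g)$ to be isometric to $\mathbb{R}^{n}$ and $v(x)=a+b|x-x_0|^{2}$ for some $a,b>0$. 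Inverting the substitution yields $u(x)=(a+b|x-x_0|^{2})^{-(n-2)/2}$, and substitution into the equation forces $K\equiv n(n-2)ab$.

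The core obstacle is the bookkeeping in the integration-by-parts step: the two thresholds on $\alpha$ in the statement are the precise break-even points of the error estimate, so the proof has no slack. One has to choose the exponents $(a,b,\beta)$ and the order of the integrations by parts so that no term of uncontrollable sign is produced and every error term picks up a genuine negative power of $R$. In particular, the constraints arising from the $\Delta K\geq 0$ and $|\nabla K|\leq (C/r)K$ hypotheses have to be compatible simultaneously with the cutoff algebra and with the two separate choices of weights producing the two components of the $\max$ in the admissible range for $\alpha$.
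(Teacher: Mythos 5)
Your overall strategy matches the paper's: set $v=u^{-2/(n-2)}$, apply the Bochner formula to $v$, test against a radial cutoff, use $\mathrm{Ric}\geq 0$, $\Delta K\geq 0$ and $|\nabla K|\leq CK/r$ to get favorable signs, send the radius to infinity to obtain $\mathring\nabla^2 v\equiv 0$ and $\mathrm{Ric}(\nabla v,\nabla v)\equiv 0$, and then invoke Tashiro-type rigidity (the product-splitting alternative in Lemma~\ref{lem5} is ruled out because $\Delta v>0$). That part is fine and is exactly the paper's route.

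The part you leave schematic, however, is precisely where the theorem's content lies, and the mechanism you propose for it is not the paper's and is not shown to close. You suggest putting a factor $K^a$ into the Bochner multiplier and tuning $(a,b)$; the paper never weights the Bochner identity by a power of $K$. Instead it first runs Corollary~\ref{cor1} and Lemma~\ref{lem1} with the plain weight $\psi^q u^{2(n-1)/(n-2)}$, reduces the error to $\int\psi^{q-2}u^{-(2+\gamma)/(n-2)}|\nabla u|^2\,u^{\gamma/(n-2)}|\nabla\psi|^2$ plus a $K$-term, and then produces a \emph{separate} $L^2$ gradient estimate by multiplying the PDE itself by $\psi^{q-2}u^{(n-4-\gamma)/(n-2)}$, for a free parameter $\gamma\in(n-4,2n-6)$. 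Two things are crucial there and are absent from your sketch: (i) the term $-\int\psi^{q-2}Ku^{(2n-2-\gamma)/(n-2)}$ is kept with its negative sign and is used to cancel the matching $K$-term coming from the Bochner side; (ii) Young's inequality on the cross term produces a factor $K^{-(2n-6-\gamma)/4}$, which is where the lower bound $K\geq Cr^{-\sigma}$ enters, together with $u\leq Cr^\alpha$ and Bishop--Gromov. The final exponent $\lambda(\gamma)=\gamma(\tfrac{\alpha}{n-2}+\tfrac12-\tfrac\sigma4)+\tfrac{n-3}{2}\sigma-1$ is affine in $\gamma$, so requiring $\lambda(\gamma)<0$ for some $\gamma\in(n-4,2n-6)$ reduces to checking the two endpoints, and these give exactly the two entries of the $\max$ in the hypothesis. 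Without this explicit bookkeeping — the auxiliary test of the PDE, the retained negative $K$-term, and the $\gamma$-optimization — your argument does not yet establish that the error decays, and it is not at all clear that the $K^a$-weighted Bochner identity you propose reproduces the same thresholds.

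You also do not address $n=3$, where the paper runs the argument with $\gamma=0$, $q=4$ and needs neither the upper bound on $u$ nor the lower bound on $K$; this should at least be noted since the statement only imposes those hypotheses for $n\geq 4$.
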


From this theorem, we have the following

\begin{corollary} A complete, noncompact, nonflat, three-dimensional Riemannian manifold with nonnegative Ricci curvature does not admit any nonnegative, nontrivial solution of the critical equation
$$
-\Delta u = Ku^{5},
$$
with $0\not\equiv K\in C^2(M)$, $K\geq 0$, $\Delta K\geq0$ and $$|\nabla K(x)|\leq \frac{C}{r(x)}K(x)$$
outside a compact set of $M$.
\end{corollary}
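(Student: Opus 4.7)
The plan is to recognize this statement as a direct specialization of Theorem \ref{t-cri2} to the dimension $n=3$. First I would verify that the hypotheses match. With $n=3$, the critical Sobolev exponent $\tfrac{n+2}{n-2}$ equals $5$, so the equation $-\Delta u = K u^{5}$ is exactly the critical equation treated in Theorem \ref{t-cri2}. The assumed conditions $0 \not\equiv K \in C^2(M)$, $K \geq 0$, $\Delta K \geq 0$, and $|\nabla K(x)| \leq \tfrac{C}{r(x)} K(x)$ outside a compact set coincide with the structural hypotheses imposed in Theorem \ref{t-cri2}. The additional assumptions on $u$ (the polynomial bound $u \leq C r^{\alpha}$) and on the lower decay of $K$ (the bound $K \geq C/r^{\sigma}$) in Theorem \ref{t-cri2} are explicitly required only in the range $n \geq 4$, so in the three-dimensional setting they become vacuous and impose no additional restriction.

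Then I would invoke Theorem \ref{t-cri2} to obtain the dichotomy: either $u \equiv 0$ on $M$, or $(M^3, g)$ is isometric to $\RR^3$ with the Euclidean metric and $u$, $K$ are given by the Aubin--Talenti form. Since the hypothesis of the corollary requires $(M, g)$ to be nonflat, the second alternative is automatically excluded (Euclidean $\RR^3$ is flat, hence any manifold isometric to it is flat). Only the trivial possibility $u \equiv 0$ remains, which is the desired conclusion.

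There is essentially no obstacle here, since the substance of the proof is entirely carried by Theorem \ref{t-cri2}: the Bochner-type integral identity applied to a suitable power of $u$, the vanishing of the traceless Hessian and of the Ricci tensor along $\nabla u$, and the resulting rigidity via the characterization of conformal gradient vector fields. The only point worth emphasizing in the sketch is that the three-dimensional case avoids the auxiliary upper bound on $u$ and lower bound on $K$ needed in higher dimensions, which is why the corollary can be stated with such clean hypotheses and parallels the $n=3$ special case of Theorem \ref{t-cri2k1} with $K \equiv 1$.
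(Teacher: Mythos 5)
Your proof is correct and is exactly the paper's intended argument: the corollary is stated immediately after Theorem \ref{t-cri2} with the words ``From this theorem, we have the following,'' and for $n=3$ the extra growth/decay hypotheses on $u$ and $K$ are indeed not required, so the nonflatness assumption rules out the rigidity alternative and forces $u\equiv 0$.
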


\medskip

In the proofs of Theorem \ref{t-sub} and \ref{t-cri2} we use Bishop-Gromov volume estimate which ensures that the volume of geodesic balls of radius $\rho$ grows at most as $C \rho^n$ as $\rho$ tends to infinity. As it is clear from the proofs, a slower rate of growth allows for weaker assumption on $u$ and/or $K$. We leave the details to the interested reader.

\medskip

Our last theorem deals with the case $n=2$ and generalizes Theorem \ref{t-cri3k1}.
\begin{theorem}\label{t-cri3} Let $(M^2,g)$ be a  complete noncompact Riemannian surface with nonnegative scalar curvature. Let $u\in C^{2}(M)$ be a solution of
$$\begin{cases}-\Delta u=K e^u \\ \int_M K e^u <+\infty \end{cases} $$
with $0\not\equiv K\in C^2(M)$, $K\geq 0$ and $\Delta K\geq0$. Assume that, outside a compact set of $M$,
$$
u(x)\geq -4\log r(x)-2\gamma\log\log r(x),\quad \gamma\in[0,1)
$$
and, for some $C>0$,
$$
(i)\,K(x)\leq C(1+r(x)^2)\,\,\text{and }u(x)\leq C \log r(x),\qquad\text{or}\qquad (ii)\,|\nabla K(x)|\leq \frac{C}{r(x)}K(x).
$$
Then $(M^2,g)$ is isometric to $\RR^2$ with the Euclidean metric and
$$
u(x)=\log\frac{1}{(a+b|x-x_0|^2)^2},\quad K\equiv 8ab
$$
for some $a,b>0$ and $x_0\in\RR^2$.
\end{theorem}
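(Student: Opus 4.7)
The plan is to adapt the Bochner-plus-test-functions strategy of Theorems \ref{t-cri} and \ref{t-cri2} to the two-dimensional exponential setting. The crucial substitution is
$$
w := e^{-u/2},
$$
which sends the flat bubble $u(x)=\log\bigl[1/(a+b|x-x_0|^2)^2\bigr]$ to the quadratic polynomial $w(x) = a + b|x-x_0|^2$, whose traceless Hessian $\mathring{\nabla}^2 w$ vanishes identically. A short computation converts $-\Delta u = K e^u$ into
$$
w\,\Delta w - |\nabla w|^2 = \frac{K}{2},
$$
the finiteness condition $\int_M K e^u\,dV_g < +\infty$ becomes $\int_M K\,w^{-2}\,dV_g < +\infty$, and the a priori lower bound on $u$ translates into the upper bound $w(x) \leq C\,r(x)^2 (\log r(x))^\gamma$ outside a compact set.

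The core of the argument is to apply the Bochner formula to $w$,
$$
\tfrac{1}{2}\Delta |\nabla w|^2 = |\nabla^2 w|^2 + \operatorname{Ric}(\nabla w,\nabla w) + \langle \nabla w, \nabla \Delta w\rangle,
$$
to replace $\Delta w$ by $|\nabla w|^2/w + K/(2w)$ using the PDE, and to exploit the two-dimensional Hessian decomposition $|\nabla^2 w|^2 = |\mathring{\nabla}^2 w|^2 + \tfrac{1}{2}(\Delta w)^2$. I would then multiply by a test function of the form $w^{-a}\,\eta^{2k}$, with $\eta$ a standard cutoff supported in $B_{2\rho}$ and equal to $1$ on $B_\rho$, and integrate by parts to produce an estimate of the form
$$
\int_M \bigl(|\mathring{\nabla}^2 w|^2 + \operatorname{Ric}(\nabla w,\nabla w)\bigr)\,w^{-a}\,\eta^{2k}\,dV_g \leq I_\rho,
$$
where the remainder $I_\rho$ is controlled, via H\"older's inequality, the integrability of $K/w^2$, and Bishop--Gromov volume comparison, by terms that vanish as $\rho \to \infty$. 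The hypothesis $\Delta K \geq 0$ contributes a favorable sign when an integration by parts falls on $\Delta K$; the two alternative conditions (i) and (ii) on $K$ are used precisely to absorb the $\nabla K$ terms that arise from the same procedure.

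Letting $\rho \to \infty$, I would conclude $\mathring{\nabla}^2 w \equiv 0$ and $\operatorname{Ric}(\nabla w,\nabla w) \equiv 0$ on $M$. The function $w$ cannot be constant, since otherwise the PDE would force $K \equiv 0$; hence $\nabla w$ is a nontrivial conformal gradient vector field. The classification of such fields on complete manifolds with nonnegative Ricci curvature, invoked in the same way as in Theorems \ref{t-crik1}--\ref{t-cri2}, forces $(M^2,g)$ to be isometric to $(\mathbb{R}^2, g_{\mathrm{eucl}})$. On Euclidean $\mathbb{R}^2$ the identity $\nabla^2 w = \tfrac{1}{2}(\Delta w)\,g$ forces $w$ to be a rotationally symmetric quadratic polynomial $a + b|x-x_0|^2$ with $a, b > 0$, and substituting back into $w\Delta w - |\nabla w|^2 = K/2$ yields $K \equiv 8ab$ together with the stated formula for $u = -2\log w$.

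The hard part will be the test-function step: the exponent $a$ on $w$, the exponent $k$ on the cutoff, and the order of the integrations by parts must be tuned so that every remainder term is controlled simultaneously. Under hypothesis (ii) the natural expansion produces terms in $\nabla K$, absorbed through $|\nabla K|/K \leq C/r$ and Cauchy--Schwarz; under (i) the polynomial bounds on $K$ and $u$ close the estimate directly via the integrability of $K/w^2 = K e^u$ and the upper bound $w \leq C r^2 (\log r)^\gamma$. Balancing the exponents so that both scenarios work, while respecting the logarithmic slack $\gamma < 1$ (which enters as a borderline case of the integrability estimate), is the chief technical difficulty; the two-dimensional specialization of the Bochner identity, where the coefficient is $1/2$ rather than $1/n$, makes the algebra somewhat more delicate than in the higher-dimensional analog of Theorem \ref{t-cri2}.
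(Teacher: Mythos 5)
Your framework is the same as the paper's: work with $w = e^{-u/2}$ (so that the bubble becomes a quadratic polynomial), apply Bochner, split the Hessian in dimension~$2$ as $|\nabla^2 w|^2 = |\mathring{\nabla}^2 w|^2 + \tfrac12(\Delta w)^2$, weight by a power of $w$ and a cutoff, use $\Delta K\geq0$ for a favorable sign, prove $\mathring{\nabla}^2 w\equiv 0$ and $\operatorname{Ric}(\nabla w,\nabla w)\equiv 0$, and finish via the classification of conformal gradient vector fields. That much is right, and the final algebra ($K\equiv 8ab$ from $w\Delta w-|\nabla w|^2=K/2$) is correct. You should also note, when invoking the classification lemma, that the cylinder alternative is ruled out because $\Delta w = (|\nabla w|^2 + K/2)/w \geq 0$ with strict inequality somewhere (since $K\not\equiv 0$), whereas a function affine in the split coordinate is harmonic.

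The genuine gap is the step you flag yourself as ``the hard part,'' and it is not merely a matter of tuning constants. Two pieces of structure are missing. First, the weight $w^{-a}$ is not free: the only exponent that works is $a=1$, i.e. weighting by $e^{u/2}$, because this is precisely the choice (corresponding to $m=-\tfrac12$, $d=\tfrac12$ in the paper's Lemma~\ref{lem4}) for which the coefficients of the dangerous terms $\int \eta e^{(2m+d)u}|\nabla u|^4$ and $\int \eta K e^{(2m+d+1)u}|\nabla u|^2$ vanish identically — the two-dimensional analogue of the $\alpha=\beta=0$ cancellation in the critical case for $n\geq 3$. Without identifying this cancellation, the Bochner identity leaves uncontrolled sign-indefinite bulk terms. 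Second, the ``standard cutoff supported in $B_{2\rho}$, equal to $1$ on $B_\rho$,'' with $|\nabla\eta|\sim\rho^{-1}$, does not close the estimate: with $w\leq C r^2(\log r)^\gamma$ one only gets a bound of order $(\log\rho)^{1+\gamma}$ on the relevant Dirichlet-type integral, so a $\rho^{-2}$ decay of $|\nabla\eta|^2$ is useless and one needs cutoffs with $|\nabla\eta|\lesssim (r\log\rho)^{-1}$ supported on a \emph{dyadic-in-log} annulus (transitioning from $B_{\rho^\theta}$ to $B_\rho$, or from $B_{\sqrt\rho}$ to $B_\rho$). It is exactly the extra $(\log\rho)^{-2}$ from such cutoffs, against the $(\log\rho)^{1+\gamma}$ from the lower bound on $u$, that produces a net decay $(\log\rho)^{-(1-\gamma)}$ and uses the hypothesis $\gamma<1$ at the last moment. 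With a polynomial cutoff the argument would fail.
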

It is interesting to observe that Chen and Li in \cite{cheli2} exhibited the explicit radial solutions
$$
u_\alpha(r)=(2+\alpha)\log\frac{4}{4+r^2}
$$
in $\RR^2$ for
$$
K(r)=(2+\alpha)\left(\frac{4+r^2}{4}\right)^\alpha,
$$
for every $\alpha\in\RR$. Note that in these examples $\Delta K\geq 0$ in $\RR^2$ if and only if $\alpha\geq 0$, while our lower bound on $u_\alpha$ is satisfied if and only if $\alpha\leq 0$. In the same paper, the authors provide some sufficient conditions on $K$ ensuring the validity of upper and lower bounds for solutions on $\RR^2$.
\medskip

Finally, to the best of our knowledge, Theorems \ref{t-cri}, \ref{t-cri2} and \ref{t-cri3} are new even in the Euclidean setting. In the more general Riemannian setting, we point out some existence results of variational solutions for the Yamabe equation under conditions on the Yamabe constant and the Yamabe constant at infinity, see \cite{kim, wei}. We explicitly note that the Yamabe equation reduces to \eqref{eq-cp} when $n\geq 3$ and the manifold is scalar flat. Of course, the conditions in the cited references cannot hold on manifolds with nonnegative Ricci curvature.

\medskip

The paper is organized as follows: in Section \ref{s-1} we collect all the technical lemmas that we will need in the proofs of our main theorems; in Section \ref{s-sub} we prove Theorem \ref{t-sub} concerning nonexistence of nontrivial solutions in the subcritical case; in Sections \ref{s-cri} and \ref{s-cri2} we prove Theorems \ref{t-cri} and \ref{t-cri2} which deal with the critical case, when $n\geq 3$; finally in Section \ref{s-cri3} we prove Theorem \ref{t-cri3} on the critical equation on surfaces.

\

\section{Preliminary lemmas}\label{s-1}

\subsection{Part I} We collect here all technical lemmas that we will need in the study of equation \eqref{eq-eq}. We start showing the following key technical identity. We note that in the Euclidean case Bidaut-V\'eron and Raoux in \cite[Lemma~3.1]{bidrao} showed an estimate without including two extra terms that are crucial in proving our result in the critical case.

\begin{lemma}\label{l-BV}
Let $(M^n,g)$, $n\geq 2$, be a Riemannian manifold. For any positive function $w\in C^{2}(M)$, any nonnegative $\eta \in C^{2}_{c}(M)$ and any real numbers $d,m\in\RR$ such that $d\neq m+2$, the following identity holds:
\begin{align*}
&\frac{2(n-m)d-(n-1)(m^2+d^2)}{4n}\int_{M}\eta w^{m-2}|\nabla w|^{4}-\frac{n-1}{n}\int_{M}\eta w^m(\Delta w)^{2}\\
&\quad-\frac{2(n-1)m+(n+2)d}{2n}\int_{M}\eta w^{m-1}|\nabla w|^{2}\Delta w \\
&\quad+\frac{4}{(m+2-d)^2}\int_M \eta w^d \left| \mathring{\nabla}^2 w^{\frac{m+2-d}{2}}\right|^2+\frac{4}{(m+2-d)^2}\int_M \eta w^d \operatorname{Ric}\left(\nabla w^{\frac{m+2-d}{2}}, \nabla w^{\frac{m+2-d}{2}}\right)\\
&= \frac{m+d}{2}\int_{M}w^{m-1}|\nabla w|^{2}\langle\nabla w, \nabla \eta\rangle + \int_{M}w^m\Delta w \langle\nabla w, \nabla \eta\rangle+\frac12 \int_{M}w^m|\nabla w|^{2}\Delta\eta.
\end{align*}
\end{lemma}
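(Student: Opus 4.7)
The identity reads as an integrated, weighted version of Bochner's formula applied to the auxiliary function $f := w^{\alpha}$ with $\alpha := (m+2-d)/2$, well-defined precisely because $d\neq m+2$. The plan is to start from
$$\tfrac{1}{2}\Delta |\nabla f|^2 = |\mathring{\nabla}^2 f|^2 + \tfrac{(\Delta f)^2}{n} + \operatorname{Ric}(\nabla f,\nabla f) + \langle \nabla f, \nabla \Delta f\rangle,$$
obtained by combining the Bochner formula recalled in the introduction with the pointwise orthogonal decomposition $|\nabla^2 f|^2 = |\mathring{\nabla}^2 f|^2 + (\Delta f)^2/n$, and to multiply both sides by $\eta w^d/\alpha^2$ and integrate over $M$. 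Since $1/\alpha^2 = 4/(m+2-d)^2$, the Hessian and Ricci contributions already coincide with the two curvature integrals appearing in the statement. Everything else has to be produced from the remaining two terms by integration by parts together with an algebraic rewriting.

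The rewriting is pure chain rule: $\nabla f = \alpha w^{\alpha-1}\nabla w$ and $\Delta f = \alpha w^{\alpha-1}\Delta w + \alpha(\alpha-1) w^{\alpha-2}|\nabla w|^2$, and the exponent identity $2\alpha-2 = m-d$ produces $\tfrac{w^d}{\alpha^2}|\nabla f|^2 = w^m|\nabla w|^2$ and the clean expansion
$$\tfrac{w^d}{\alpha^2}(\Delta f)^2 = w^m(\Delta w)^2 + (m-d)\, w^{m-1}|\nabla w|^2\Delta w + \tfrac{(m-d)^2}{4}\, w^{m-2}|\nabla w|^4,$$
so that precisely the three monomials on the LHS of the target identity appear, with no stray exponents.

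Next, I perform two integrations by parts, both legitimate since $\eta\in C^2_c(M)$. First, $\int \tfrac{\eta w^d}{2\alpha^2}\Delta|\nabla f|^2 = \int \tfrac{|\nabla f|^2}{2\alpha^2}\Delta(\eta w^d)$; expanding $\Delta(\eta w^d)$ by the product rule produces the boundary terms $\tfrac{1}{2}\int w^m|\nabla w|^2\Delta\eta$ and $d\int w^{m-1}|\nabla w|^2\langle\nabla\eta,\nabla w\rangle$, together with bulk contributions $\tfrac{d}{2}\int \eta w^{m-1}|\nabla w|^2\Delta w$ and $\tfrac{d(d-1)}{2}\int \eta w^{m-2}|\nabla w|^4$. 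Second, $-\int\tfrac{\eta w^d}{\alpha^2}\langle\nabla f,\nabla\Delta f\rangle = \int\operatorname{div}(\tfrac{\eta w^d}{\alpha^2}\nabla f)\,\Delta f$ splits, via the product rule on the divergence, into a $\nabla\eta$ boundary piece, a bulk piece originating from $\nabla w^d$, and the term $\int \tfrac{\eta w^d}{\alpha^2}(\Delta f)^2$, which partially cancels the $-(\Delta f)^2/n$ left over from Bochner and supplies the coefficient $\tfrac{n-1}{n}$ in front of $\int \eta w^m(\Delta w)^2$.

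The last step is to collect contributions term by term. The coefficient of $\int \eta w^m(\Delta w)^2$ reduces to $1 - \tfrac{1}{n} = \tfrac{n-1}{n}$; the coefficient of $\int \eta w^{m-1}|\nabla w|^2\Delta w$ to $\tfrac{d}{2} + d + (m-d) - \tfrac{m-d}{n} = \tfrac{2(n-1)m+(n+2)d}{2n}$; and the coefficient of $\int \eta w^{m-2}|\nabla w|^4$ to $\tfrac{d(d-1)}{2}+\tfrac{d(m-d)}{2}+\tfrac{(n-1)(m-d)^2}{4n}$, which after expansion equals $\tfrac{(n-1)(m^2+d^2)-2(n-m)d}{4n}$. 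Transposing these bulk terms to the LHS and using $d + \tfrac{m-d}{2} = \tfrac{m+d}{2}$ to combine the two boundary $\langle\nabla\eta,\nabla w\rangle$ contributions yields exactly the stated identity. The only real obstacle is the algebraic bookkeeping: tracking signs and exponents across the two integrations by parts, and in particular verifying the nontrivial simplification of the $w^{m-2}|\nabla w|^4$ coefficient to the form stated.
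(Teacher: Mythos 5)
Your proof is correct and follows essentially the same strategy as the paper: apply the Bochner formula to $f=w^{(m+2-d)/2}$, multiply by $\eta w^d/\alpha^2$, integrate, and integrate by parts to collect the coefficients; I checked the algebra of all the coefficients and they all come out right. The paper organizes the bookkeeping by expanding everything in terms of $w$ first and then integrating by parts three times (on $\langle\nabla w,\nabla\Delta w\rangle$, $\langle\nabla w,\nabla|\nabla w|^2\rangle$ and $\Delta(w^{2t-2}|\nabla w|^2)$), setting $m=d+2t-2$ only at the end, whereas you keep $|\nabla f|^2$ and $\Delta f$ intact longer and integrate by parts in two cleaner moves — but this is a presentational difference, not a different argument.
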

\begin{proof} Let $f:=w^t$, for some $t\neq 0$. Then
$$
\nabla f = t w^{t-1}\nabla w,\qquad \Delta f=t w^{t-1}\Delta w + t(t-1)w^{t-2}|\nabla w|^2.
$$
Now we use the Bochner formula
\begin{align*}
\frac12 \Delta|\nabla f|^2&=|\nabla^2 f|^2+ \operatorname{Ric}(\nabla f,\nabla f)+\langle \nabla f,  \nabla \Delta f\rangle \\
&= \left|\mathring{\nabla}^2f\right|^2+ \frac{1}{n}(\Delta f)^2+ \operatorname{Ric}(\nabla f,\nabla f) +\langle \nabla f, \nabla\Delta f\rangle\\
&= \left|\mathring{\nabla}^2 w^t\right|^2+ \operatorname{Ric}(\nabla w^t,\nabla w^t)+\frac{t^2}{n}w^{2t-2}(\Delta w)^2 + \frac{t^2(t-1)^2}{n}w^{2t-4}|\nabla w|^4 \\
&\quad+ \frac{2t^2(t-1)}{n}w^{2t-3}|\nabla w|^2 \Delta w +t^2(t-1)w^{2t-3}|\nabla w|^2\Delta w\\
&\quad+ t^2w^{2t-2}\langle\nabla w,\nabla\Delta w\rangle+t^2(t-1)w^{2t-3}\langle\nabla w,\nabla |\nabla w|^2\rangle \\
&\quad+ t^2(t-1)(t-2)w^{2t-4}|\nabla w|^4.
\end{align*}
Let $d\in\RR$, multiplying the above equation by $t^{-2}\eta w^d$ and integrating over $M$, we obtain the following
\begin{align*}
&\left(\frac{(t-1)^2}{n}+(t-1)(t-2)\right)\int_M \eta w^{d+2t-4}|\nabla w|^4 +\frac{(t-1)(n+2)}{n}\int_M\eta w^{d+2t-3}|\nabla w|^2\Delta w \\
&\quad+\int_M\eta w^{d+2t-2}\langle\nabla w,\nabla\Delta w\rangle+\frac{1}{n}\int_M \eta w^{d+2t-2}(\Delta w)^2\\
&\quad+(t-1)\int_M \eta w^{d+2t-3}\langle\nabla w,\nabla|\nabla w|^2\rangle-\frac12\int_M \eta w^d \Delta\left(w^{2t-2}|\nabla w|^2\right)\\
&\quad+t^{-2}\int_M \eta w^d \left| \mathring{\nabla}^2 w^t\right|^2+t^{-2}\int_M \eta w^d \operatorname{Ric}(\nabla w^t, \nabla w^t) =0.
\end{align*}
Integrating by parts, we obtain
\begin{align*}
\int_M\eta w^{d+2t-2}\langle\nabla w,\nabla\Delta w\rangle &= -\int_M w^{d+2t-2}\Delta w \langle\nabla w,\nabla \eta\rangle \\
&\quad-(d+2t-2)\int_M \eta w^{d+2t-3}|\nabla w|^2\Delta w - \int_M \eta w^{d+2t-2}(\Delta w)^2,
\end{align*}
\begin{align*}
\int_M \eta w^{d+2t-3}\langle\nabla w,\nabla|\nabla w|^2\rangle &= -\int_M w^{d+2t-3}|\nabla w|^2 \langle\nabla w,\nabla \eta\rangle \\
&\quad-(d+2t-3)\int_M \eta w^{d+2t-4}|\nabla w|^4 - \int_M \eta w^{d+2t-3}|\nabla w|^2\Delta w,
\end{align*}
\begin{align*}
\int_M \eta w^d \Delta\left(w^{2t-2}|\nabla w|^2\right) &= \int_M w^{d+2t-2}|\nabla w|^2 \Delta \eta +2d\int_M w^{d+2t-3}|\nabla w|^2\langle\nabla w,\nabla\eta\rangle\\
&\quad+d(d-1)\int_M \eta w^{d+2t-4}|\nabla w|^4 +d \int_M \eta w^{d+2t-3}|\nabla w|^2\Delta w.
\end{align*}
Substituting in the above identity, rearranging terms and setting $m:=d+2t-2$ we conclude.
\end{proof}

In the next lemma we apply the previous identity to positive solutions of equation \eqref{eq-eq}.

\begin{lemma}\label{l-key}
Let $(M^n,g)$, $n\geq 2$, be a  Riemannian manifold. For any positive solution $u\in C^{2}(M)$ of $$-\Delta u=Ku^p,$$ with $p\in\RR$, $K\in C^2(M)$, any nonnegative $\psi \in C^{2}_{c}(M)$ and any real numbers $q\geq 2 $, $d,m\in\RR$ such that $d\neq m+2$, $m+p+1\neq0$ the following identity holds
\begin{align*}
&\alpha\int_{M}\psi^q u^{m-2}|\nabla u|^{4} + \beta \int_M \psi^qK u^{m+p-1}|\nabla u|^2+\frac{n-1}{n(m+p+1)}\int_M\psi^q\Delta K\, u^{m+p+1}\\
&\quad\quad+\frac{4}{(m+2-d)^2}\int_M \psi^q u^d \left| \mathring{\nabla}^2 u^{\frac{m+2-d}{2}}\right|^2+\frac{4}{(m+2-d)^2}\int_M \psi^q u^d \operatorname{Ric}\left(\nabla u^{\frac{m+2-d}{2}}, \nabla u^{\frac{m+2-d}{2}}\right)\\
&= \frac{n-2}{n}\int_M  Ku^{m+p}\langle \nabla u, \nabla\psi^q\rangle+\frac{m+d}{2}\int_{M}u^{m-1}|\nabla u|^{2}\langle\nabla u, \nabla \psi^q\rangle\\
&\quad\quad+\frac12 \int_{M}u^m|\nabla u|^{2}\Delta\psi^q+\frac{n-1}{n(m+p+1)}\int_M\Delta\psi^q K u^{m+p+1},
\end{align*}
where
$$
\alpha=\frac{2(n-m)d-(n-1)(m^2+d^2)}{4n}\qquad\text{and}\qquad \beta=\frac{(n+2)d-2(n-1)p}{2n}.
$$
\end{lemma}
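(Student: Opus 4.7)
The plan is to apply Lemma \ref{l-BV} with $w=u$ and test function $\eta=\psi^q$ (which lies in $C^2_c(M)$ since $q\geq 2$ and $\psi\in C^2_c(M)$), then use the PDE $\Delta u=-Ku^p$ to replace every $\Delta u$ that appears. After this direct substitution the coefficient $\alpha$ appears exactly as claimed, the Hessian and Ricci terms transfer verbatim, the $|\nabla u|^2\Delta u$-piece contributes an initial $\tfrac{2(n-1)m+(n+2)d}{2n}\int\psi^q Ku^{m+p-1}|\nabla u|^2$ on the left, the $u^m(\Delta u)^2$-piece becomes $-\tfrac{n-1}{n}\int\psi^q K^2u^{m+2p}$ on the left, and on the right the mixed $u^m\Delta u\langle\nabla u,\nabla\psi^q\rangle$-term becomes $-\int Ku^{m+p}\langle\nabla u,\nabla\psi^q\rangle$, while the $\tfrac{m+d}{2}$-term and the $\Delta\psi^q$-term persist unchanged. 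Neither the $\Delta K$-integral nor the sought coefficient $\beta$ is yet present; producing them is the content of the proof.

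The key manoeuvre is three successive integrations by parts that trade the $K^2$-term for a $\Delta K$-term. First, use the PDE once more to rewrite $-\tfrac{n-1}{n}\int\psi^q K^2u^{m+2p}=\tfrac{n-1}{n}\int\psi^q Ku^{m+p}\,\Delta u$ and integrate by parts against $\nabla u$. This splits into three pieces: a $\int\psi^q Ku^{m+p-1}|\nabla u|^2$-contribution of coefficient $-\tfrac{(n-1)(m+p)}{n}$ that combines with the pre-existing one to give precisely $\beta=\tfrac{(n+2)d-2(n-1)p}{2n}$, a term $-\tfrac{n-1}{n}\int Ku^{m+p}\langle\nabla u,\nabla\psi^q\rangle$, and a new $K$-derivative piece $-\tfrac{n-1}{n}\int\psi^q u^{m+p}\langle\nabla K,\nabla u\rangle$. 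Second, rewrite this last piece as $-\tfrac{n-1}{n(m+p+1)}\int\psi^q\langle\nabla K,\nabla u^{m+p+1}\rangle$ (here the hypothesis $m+p+1\ne 0$ enters) and integrate by parts once more, moving the derivative onto $K$. This produces the sought $\tfrac{n-1}{n(m+p+1)}\int\psi^q\Delta K\,u^{m+p+1}$ on the left, plus a leftover $-\tfrac{n-1}{n(m+p+1)}\int u^{m+p+1}\langle\nabla\psi^q,\nabla K\rangle$. Third, integrate this leftover by parts again, this time shifting the derivative off $\nabla K$: the outcome is the desired $\tfrac{n-1}{n(m+p+1)}\int Ku^{m+p+1}\Delta\psi^q$ on the right, together with a second contribution to the $\int Ku^{m+p}\langle\nabla u,\nabla\psi^q\rangle$-integrand.

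It remains only to collect coefficients, which is the main bookkeeping burden. Three contributions combine to form the coefficient of $\int Ku^{m+p}\langle\nabla u,\nabla\psi^q\rangle$ on the right-hand side: the initial $-1$ inherited from Lemma \ref{l-BV}, a $+\tfrac{n-1}{n}$ from the first integration by parts, and a $+\tfrac{n-1}{n}$ from the third, summing to exactly $\tfrac{n-2}{n}$. The hypothesis $d\ne m+2$ is inherited from Lemma \ref{l-BV} so that $\mathring{\nabla}^2 u^{(m+2-d)/2}$ is well-defined, while $m+p+1\ne 0$ is precisely what enables the second integration by parts. Conceptually the argument is straightforward, but a single arithmetic slip in the three-step IBP chain breaks the identity; verifying that the corrections to $\beta$ and to $\tfrac{n-2}{n}$ align is the real content of the computation.
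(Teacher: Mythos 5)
Your proposal is correct and follows exactly the paper's route: apply Lemma~\ref{l-BV} with $w=u$, $\eta=\psi^q$, substitute $\Delta u=-Ku^p$, and then perform the same chain of three integrations by parts to trade the $K$-terms for $\Delta K$, $\Delta\psi^q$, and $\nabla\psi^q$ contributions, with the coefficients $\beta$ and $\tfrac{n-2}{n}$ emerging precisely as you compute. One small bookkeeping remark: the leftover after your second integration by parts is $+\tfrac{n-1}{n(m+p+1)}\int u^{m+p+1}\langle\nabla\psi^q,\nabla K\rangle$ when kept on the same side as the $\Delta K$-term (not $-$), but since the coefficients you then report for the $\Delta\psi^q\,K u^{m+p+1}$-term and for the final $\tfrac{n-2}{n}$ are correct, this is merely a sign slip in the narration and does not affect the argument.
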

\begin{proof}
Applying Lemma~\ref{l-BV} to $w=u$ and $\eta=\psi^q$, for any reals $d,m$ with $d\neq m+2$, we get
\begin{align*}
&\frac{2(n-m)d-(n-1)(m^2+d^2)}{4n}\int_{M}\psi^q u^{m-2}|\nabla u|^{4}-\frac{n-1}{n}\int_{M}\psi^q u^m(\Delta u)^{2}\\
&\quad\quad-\frac{2(n-1)m+(n+2)d}{2n}\int_{M}\psi^q u^{m-1}|\nabla u|^{2}\Delta u \\
&\quad\quad+\frac{4}{(m+2-d)^2}\int_M \psi^q u^d \left| \mathring{\nabla}^2 u^{\frac{m+2-d}{2}}\right|^2+\frac{4}{(m+2-d)^2}\int_M \psi^q u^d \operatorname{Ric}\left(\nabla u^{\frac{m+2-d}{2}}, \nabla u^{\frac{m+2-d}{2}}\right)\\
&= \frac{m+d}{2}\int_{M}u^{m-1}|\nabla u|^{2}\langle\nabla u, \nabla \psi^q\rangle + \int_{M}u^m\Delta u \langle\nabla u, \nabla \psi^q\rangle+\frac12 \int_{M}u^m|\nabla u|^{2}\Delta\psi^q.
\end{align*}
Hence, using the equation $\Delta u=-Ku^p$, we obtain
\begin{align}
&\frac{2(n-m)d-(n-1)(m^2+d^2)}{4n}\int_{M}\psi^q u^{m-2}|\nabla u|^{4}\nonumber\\
&\quad\quad+\frac{n-1}{n}\int_{M}\psi^q Ku^{m+p}\Delta u+\frac{2(n-1)m+(n+2)d}{2n}\int_{M}\psi^q Ku^{m+p-1}|\nabla u|^{2}\nonumber\\
&\quad\quad+\frac{4}{(m+2-d)^2}\int_M \psi^q u^d \left| \mathring{\nabla}^2 u^{\frac{m+2-d}{2}}\right|^2+\frac{4}{(m+2-d)^2}\int_M \psi^q u^d \operatorname{Ric}\left(\nabla u^{\frac{m+2-d}{2}}, \nabla u^{\frac{m+2-d}{2}}\right)\\
&= \frac{m+d}{2}\int_{M}u^{m-1}|\nabla u|^{2}\langle\nabla u, \nabla \psi^q\rangle-\int_{M}Ku^{m+p}\langle\nabla u, \nabla \psi^q\rangle+\frac12 \int_{M}u^m|\nabla u|^{2}\Delta\psi^q.\label{ineq333}
\end{align}
Integrating by parts in the first integral in the second line above, we get
\begin{align*}
&\frac{n-1}{n}\int_{M}\psi^q Ku^{m+p}\Delta u\\
&=-\frac{n-1}{n}\int_M  Ku^{m+p}\langle \nabla u, \nabla\psi^q\rangle-\frac{(m+p)(n-1)}{n}\int_M K\psi^q u^{m+p-1} |\nabla u|^2\\
&\,\,\,\,\,\,\,-\frac{n-1}{n}\int_M\psi^qu^{m+p}\langle\nabla K,\nabla u\rangle,
\end{align*}
and
\begin{align*}
&-\frac{n-1}{n}\int_M\psi^q u^{m+p}\langle\nabla K,\nabla u\rangle=-\frac{n-1}{n(m+p+1)}\int_M\psi^q\langle\nabla K,\nabla u^{m+p+1}\rangle\\
&\,\,\,\,\,\,\,=\frac{n-1}{n(m+p+1)}\int_M\langle\nabla\psi^q,\nabla K\rangle u^{m+p+1}+\frac{n-1}{n(m+p+1)}\int_M \psi^q\Delta K\,u^{m+p+1}\\
&\,\,\,\,\,\,\,=-\frac{n-1}{n(m+p+1)}\int_M\Delta\psi^q\,Ku^{m+p+1}-\frac{n-1}{n}\int_M Ku^{m+p}\langle\nabla\psi^q,\nabla u\rangle\\
&\,\,\,\,\,\,\,\,\,\,\,\,\,\,+\frac{n-1}{n(m+p+1)}\int_M \psi^q\Delta K\,u^{m+p+1}.
\end{align*}
Thus, substituting and setting
$$
I_1=\int_M  Ku^{m+p}\langle \nabla u, \nabla\psi^q\rangle,\quad I_2=\int_{M}u^{m-1}|\nabla u|^{2}\langle\nabla u, \nabla \psi^q\rangle,\quad I_3=\int_{M}u^m|\nabla u|^{2}\Delta\psi^q,
$$
equality~\eqref{ineq333} becomes
\begin{align*}
&\frac{2(n-m)d-(n-1)(m^2+d^2)}{4n}\int_{M}\psi^q u^{m-2}|\nabla u|^{4}+\frac{n-1}{n(m+p+1)}\int_M \psi^q\Delta K\,u^{m+p+1}\\
&\quad\quad-2\frac{n-1}{n}I_1-\frac{(m+p)(n-1)}{n}\int_M \psi^q Ku^{m+p-1} |\nabla u|^2\\
&\quad\quad+\frac{2(n-1)m+(n+2)d}{2n}\int_{M}\psi^q Ku^{m+p-1}|\nabla u|^{2}-\frac{n-1}{n(m+p+1)}\int_M\Delta\psi^q\,Ku^{m+p+1}\\
&\quad\quad+\frac{4}{(m+2-d)^2}\int_M \psi^q u^d \left| \mathring{\nabla}^2 u^{\frac{m+2-d}{2}}\right|^2+\frac{4}{(m+2-d)^2}\int_M \psi^q u^d \operatorname{Ric}\left(\nabla u^{\frac{m+2-d}{2}}, \nabla u^{\frac{m+2-d}{2}}\right)\\
&= \frac{m+d}{2}I_2-I_1+\frac{1}{2}I_3,
\end{align*}
Hence, rearranging and simplifying, we conclude
\begin{align}
&\alpha\int_{M}\psi^q u^{m-2}|\nabla u|^{4} + \beta \int_M \psi^qK u^{m+p-1}|\nabla u|^2+\frac{n-1}{n(m+p+1)}\int_M \psi^q\Delta K\,u^{m+p+1}\\
&\quad\quad+\frac{4}{(m+2-d)^2}\int_M \psi^q u^d \left| \mathring{\nabla}^2 u^{\frac{m+2-d}{2}}\right|^2+\frac{4}{(m+2-d)^2}\int_M \psi^q u^d \operatorname{Ric}\left(\nabla u^{\frac{m+2-d}{2}}, \nabla u^{\frac{m+2-d}{2}}\right)\\
&= \frac{n-2}{n}I_1+\frac{m+d}{2}I_2+\frac12 I_3+\frac{n-1}{n(m+p+1)}\int_M\Delta\psi^q\,Ku^{m+p+1},\label{eq-ab}
\end{align}
where
$$
\alpha=\frac{2(n-m)d-(n-1)(m^2+d^2)}{4n}\qquad\text{and}\qquad \beta=\frac{(n+2)d-2(n-1)p}{2n}.
$$
\end{proof}

In the subcritical case $1<p<p_c=\frac{n+2}{n-2}$ we neglect some nonnegative terms in Lemma \ref{l-key}, thus obtaining the following integral gradient estimate.

\begin{corollary}\label{cor3}
Let $(M^n,g)$, $n\geq 2$, be a  Riemannian manifold with $\operatorname{Ric}\geq0$. For any positive solution $u\in C^{2}(M)$ of $$-\Delta u=Ku^p,$$ with $p\in\RR$ , $K\in C^2(M)$, $\Delta K\geq0$, any nonnegative $\psi \in C^{2}_{c}(M)$ and any real numbers $q\geq 2 $, $d,m\in\RR$ such that $m+p+1>0$ the following estimate holds
\begin{align*}
&\alpha\int_{M}\psi^q u^{m-2}|\nabla u|^{4} + \beta \int_M \psi^qK u^{m+p-1}|\nabla u|^2+\\
&\leq \frac{n-2}{n}\int_M  Ku^{m+p}\langle \nabla u, \nabla\psi^q\rangle+\frac{m+d}{2}\int_{M}u^{m-1}|\nabla u|^{2}\langle\nabla u, \nabla \psi^q\rangle\\
&\quad\quad+\frac12 \int_{M}u^m|\nabla u|^{2}\Delta\psi^q+\frac{n-1}{n(m+p+1)}\int_M\Delta\psi^q K u^{m+p+1},
\end{align*}
where
$$
\alpha=\frac{2(n-m)d-(n-1)(m^2+d^2)}{4n}\qquad\text{and}\qquad \beta=\frac{(n+2)d-2(n-1)p}{2n}.
$$
\end{corollary}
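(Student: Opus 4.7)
The proof is essentially a direct consequence of Lemma~\ref{l-key}: the strategy is to identify three terms on the left-hand side of the identity \eqref{eq-ab} that are manifestly nonnegative under the stated hypotheses, and then drop them to obtain the desired inequality.

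The plan is the following. First, I would apply Lemma~\ref{l-key} verbatim with the given choices of $q,d,m,\psi$. Note that the assumption $m+p+1>0$ in the corollary is strictly stronger than $m+p+1\neq 0$ required by Lemma~\ref{l-key}, and implicitly one keeps the condition $d\neq m+2$ so that the Hessian and Ricci terms make sense. This yields exactly the identity stated in Lemma~\ref{l-key}, with an equality sign.

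Next, I would argue that each of the three ``extra'' terms on the left-hand side of the identity is nonnegative. Concretely:
\begin{align*}
\frac{4}{(m+2-d)^2}\int_M \psi^q u^d \left| \mathring{\nabla}^2 u^{\frac{m+2-d}{2}}\right|^2 &\geq 0,
\end{align*}
since the integrand is a product of the nonnegative quantities $\psi^q$, $u^d$ (as $u>0$) and $\bigl|\mathring{\nabla}^2 u^{(m+2-d)/2}\bigr|^2$; next,
\begin{align*}
\frac{4}{(m+2-d)^2}\int_M \psi^q u^d \operatorname{Ric}\!\left(\nabla u^{\frac{m+2-d}{2}}, \nabla u^{\frac{m+2-d}{2}}\right) &\geq 0,
\end{align*}
by the hypothesis $\operatorname{Ric}\geq 0$; and finally
\begin{align*}
\frac{n-1}{n(m+p+1)}\int_M \psi^q \Delta K\, u^{m+p+1} &\geq 0,
\end{align*}
because $\Delta K\geq 0$, $u>0$, $\psi^q\geq 0$, and the prefactor $\tfrac{n-1}{n(m+p+1)}$ is positive thanks to $m+p+1>0$.

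The last step is simply to transpose these three nonnegative quantities to the right-hand side with the appropriate sign, equivalently dropping them from the left-hand side, which turns the equality of Lemma~\ref{l-key} into the claimed inequality $\leq$. There is no genuine obstacle here: the only point requiring a bit of care is checking the sign of the coefficient $\tfrac{n-1}{n(m+p+1)}$, which is precisely why the hypothesis is strengthened from $m+p+1\neq 0$ to $m+p+1>0$. Everything else is a direct consequence of $u>0$, $\psi^q\geq 0$, $\operatorname{Ric}\geq 0$ and $\Delta K\geq 0$.
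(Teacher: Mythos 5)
Your proof is correct and matches exactly what the paper does: the paper introduces Corollary~\ref{cor3} with the single remark that one ``neglects some nonnegative terms'' from Lemma~\ref{l-key}, and you have spelled out precisely which three terms are dropped and why each is nonnegative, including the observation that the hypothesis $m+p+1>0$ (rather than merely $m+p+1\neq 0$) is what fixes the sign of the $\Delta K$ term.
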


For $n\geq 3$, in the critical case $p=p_c=\frac{n+2}{n-2}$, by choosing
$$
m=-\frac{2}{n-2},\quad d=\frac{2(n-1)}{n-2}
$$
in Lemma \ref{l-key}, we get $\alpha=\beta=0$, thus obtaining the following identity.
\begin{corollary}\label{cor1}
Let $(M^n,g)$, $n\geq 3$, be a  Riemannian manifold. For any positive solution $u\in C^{2}(M)$ of $$-\Delta u=Ku^{\frac{n+2}{n-2}},$$ with $K\in C^2(M)$, any nonnegative $\psi \in C^{2}_{c}(M)$ and any real number $q\geq 2 $, the following identity holds
\begin{align*}
&\frac{(n-2)^2}{4}\int_M \psi^q u^\frac{2(n-1)}{n-2} \left| \mathring{\nabla}^2 u^{-\frac{2}{n-2}}\right|^2+\frac{(n-2)^2}{4}\int_M \psi^q u^\frac{2(n-1)}{n-2} \operatorname{Ric}\left(\nabla u^{-\frac{2}{n-2}}, \nabla u^{-\frac{2}{n-2}}\right)\\
&\quad\quad+\frac{n-2}{2n}\int_M \psi^q\Delta K\,u^{2\frac{n-1}{n-2}}\\
&= \frac{n-2}{n}\int_M  Ku^{\frac{n}{n-2}}\langle \nabla u, \nabla\psi^q\rangle+\int_{M}u^{-\frac{n}{n-2}}|\nabla u|^{2}\langle\nabla u, \nabla \psi^q\rangle+\frac12 \int_{M}u^{-\frac{2}{n-2}}|\nabla u|^{2}\Delta\psi^q\\
&\quad\quad+\frac{n-2}{2n}\int_M\Delta\psi^q\,Ku^{2\frac{n-1}{n-2}},
\end{align*}
\end{corollary}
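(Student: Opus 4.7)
\medskip

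The plan is to apply Lemma \ref{l-key} directly with the specific choices already flagged in the text preceding the statement, namely $p = \tfrac{n+2}{n-2}$, $m = -\tfrac{2}{n-2}$, and $d = \tfrac{2(n-1)}{n-2}$. First I would verify the hypotheses of Lemma \ref{l-key}: a short computation gives $d - (m+2) = \tfrac{4}{n-2} \neq 0$ and $m + p + 1 = \tfrac{2(n-1)}{n-2} \neq 0$ for every $n \geq 3$, so the identity of Lemma \ref{l-key} is available.

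\medskip

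The central arithmetic step is the verification that both coefficients $\alpha$ and $\beta$ from Lemma \ref{l-key} vanish for this choice of parameters. For $\alpha$, this amounts to the identity $2(n-m)d = (n-1)(m^2 + d^2)$: after substitution, both sides reduce to $\tfrac{4(n-1)(n^2-2n+2)}{(n-2)^2}$. For $\beta$, one checks $(n+2)d = 2(n-1)p$, which is immediate since both equal $\tfrac{2(n-1)(n+2)}{n-2}$. This simultaneous cancellation is the reason behind the specific choice of $(m,d)$: it removes from the identity the two terms of otherwise indefinite sign, namely those involving $|\nabla u|^4$ and $Ku^{m+p-1}|\nabla u|^2$.

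\medskip

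It then only remains to identify the remaining coefficients and powers of $u$ with those in the statement. From $m + 2 - d = -\tfrac{4}{n-2}$ one gets $\tfrac{4}{(m+2-d)^2} = \tfrac{(n-2)^2}{4}$, while $\tfrac{m+2-d}{2} = -\tfrac{2}{n-2}$ produces the exponent appearing in the Hessian and Ricci integrals; the exponents $u^d = u^{2(n-1)/(n-2)}$ and $u^{m+p+1} = u^{2(n-1)/(n-2)}$ account for the remaining left-hand side powers of $u$, with prefactor $\tfrac{n-1}{n(m+p+1)} = \tfrac{n-2}{2n}$. On the right, $\tfrac{m+d}{2} = 1$ together with $m-1 = -\tfrac{n}{n-2}$, $m = -\tfrac{2}{n-2}$ and $m+p = \tfrac{n}{n-2}$ reproduces exactly the three integrals there. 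No genuine obstacle is expected here beyond careful bookkeeping; the conceptual point is that this particular $(m,d)$ is precisely the one eliminating the sign-indefinite terms, leaving only manifestly nonnegative quantities on the left-hand side when $\operatorname{Ric}\geq 0$ and $\Delta K \geq 0$, which is exactly what the subsequent analysis in the critical case will exploit.
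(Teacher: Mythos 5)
Your proposal is correct and follows exactly the paper's route: apply Lemma \ref{l-key} with $p=\tfrac{n+2}{n-2}$, $m=-\tfrac{2}{n-2}$, $d=\tfrac{2(n-1)}{n-2}$, verify $\alpha=\beta=0$, and match the remaining coefficients and exponents. The arithmetic checks (notably $2(n-m)d=(n-1)(m^2+d^2)=\tfrac{4(n-1)(n^2-2n+2)}{(n-2)^2}$ and $(n+2)d=2(n-1)p$) are all right, and the bookkeeping of the surviving terms reproduces the stated identity.
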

In the next lemma we estimate some of the terms in the right hand side of the above identity in terms of controlled quantities. This inequality will be useful in the critical case.
\begin{lemma}\label{lem1}
 Let $(M^n,g)$, $n\geq 3$, be a  Riemannian manifold. For any positive solution $u\in C^{2}(M)$ of $$-\Delta u=Ku^{\frac{n+2}{n-2}},$$ with $K\in C^0(M)$, any nonnegative $\psi \in C^{2}_{c}(M)$ and any real numbers $q\geq 2$, $\eps>0$ the following estimate holds
\begin{align*}
&-\frac{1}{n}\int_M  Ku^{\frac{n}{n-2}}\langle \nabla u, \nabla\psi^q\rangle+\int_{M}u^{-\frac{n}{n-2}}|\nabla u|^{2}\langle\nabla u, \nabla \psi^q\rangle+\frac12 \int_{M}u^{-\frac{2}{n-2}}|\nabla u|^{2}\Delta\psi^q \\
&\leq \frac{q(n-2)^2\eps}{8}\int_M \psi^q u^\frac{2(n-1)}{n-2} \left| \mathring{\nabla}^2 u^{-\frac{2}{n-2}}\right|^2+\frac{q}{2\eps}\int_{M}\psi^{q-2}u^{-\frac{2}{n-2}}|\nabla u|^{2}|\nabla \psi|^2.
\end{align*}
\end{lemma}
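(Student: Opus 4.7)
The plan is to manipulate the left-hand side so that only a single bilinear form in $\mathring{\nabla}^2 v$ survives, where $v:=u^{-2/(n-2)}$, and then close via Cauchy--Schwarz and Young's inequality; the critical exponent $-2/(n-2)$ is the conformal choice singled out in Corollary \ref{cor1}, and it is precisely what will trigger the decisive cancellation. First I use the equation $-\Delta u=Ku^{(n+2)/(n-2)}$, i.e.\ $Ku^{n/(n-2)}=-u^{-2/(n-2)}\Delta u$, to rewrite the first integral on the left as $\tfrac{1}{n}\int_M u^{-2/(n-2)}\Delta u\,\langle\nabla u,\nabla\psi^q\rangle$. Combining with the pointwise identity $\tfrac{\Delta u}{n}\nabla u=\tfrac12\nabla|\nabla u|^2-\mathring{\nabla}^2 u(\nabla u)$ (obtained from $\tfrac12\nabla|\nabla u|^2=\nabla^2 u(\nabla u)$ and the splitting of the Hessian into trace-free and trace parts), and integrating by parts the third integral $\tfrac12\int_M u^{-2/(n-2)}|\nabla u|^2\Delta\psi^q$, the two ``divergence pieces'' of the form $\tfrac12 u^{-2/(n-2)}\langle\nabla|\nabla u|^2,\nabla\psi^q\rangle$ that appear cancel. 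After collecting the remaining terms, the left-hand side reduces to
\begin{equation*}
\tfrac{n-1}{n-2}\int_M u^{-n/(n-2)}|\nabla u|^2\langle\nabla u,\nabla\psi^q\rangle-\int_M u^{-2/(n-2)}\mathring{\nabla}^2 u(\nabla u,\nabla\psi^q).
\end{equation*}

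Next I convert $\mathring{\nabla}^2 u$ into $\mathring{\nabla}^2 v$ via the chain-rule identity
\begin{equation*}
\mathring{\nabla}^2 v=-\tfrac{2}{n-2}u^{-n/(n-2)}\mathring{\nabla}^2 u+\tfrac{2n}{(n-2)^2}u^{-2(n-1)/(n-2)}\Bigl(\nabla u\otimes\nabla u-\tfrac{|\nabla u|^2}{n}\,g\Bigr).
\end{equation*}
Multiplying by $u$ and evaluating on $(\nabla u,\nabla\psi^q)$, the trace-free tensor on the right produces $\tfrac{n-1}{n}|\nabla u|^2\langle\nabla u,\nabla\psi^q\rangle$, hence a term $\tfrac{n-1}{n-2}u^{-n/(n-2)}|\nabla u|^2\langle\nabla u,\nabla\psi^q\rangle$, which exactly cancels the first summand in the previous display. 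Using also $\nabla u=-\tfrac{n-2}{2}u^{n/(n-2)}\nabla v$, the left-hand side collapses to
\begin{equation*}
\text{LHS}=-\tfrac{(n-2)^2}{4}\int_M u^{2(n-1)/(n-2)}\mathring{\nabla}^2 v(\nabla v,\nabla\psi^q).
\end{equation*}

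To conclude, I apply the pointwise Cauchy--Schwarz inequality $|\mathring{\nabla}^2 v(\nabla v,\nabla\psi^q)|\le|\mathring{\nabla}^2 v|\,|\nabla v|\,|\nabla\psi^q|$ together with $|\nabla\psi^q|=q\psi^{q-1}|\nabla\psi|$, then split $\psi^{q-1}=\psi^{q/2}\cdot\psi^{q/2-1}$ and use Young's inequality $ab\le\tfrac{\varepsilon}{2}a^2+\tfrac{1}{2\varepsilon}b^2$. The first resulting integral carries the constant $\tfrac{q(n-2)^2\varepsilon}{8}$ as stated; the second, by the identity $u^{2(n-1)/(n-2)}|\nabla v|^2=\tfrac{4}{(n-2)^2}u^{-2/(n-2)}|\nabla u|^2$, reduces exactly to $\tfrac{q}{2\varepsilon}\int_M\psi^{q-2}u^{-2/(n-2)}|\nabla u|^2|\nabla\psi|^2$. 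The delicate point of the argument is the cancellation in the second paragraph: it is valid only because the exponent $-2/(n-2)$ is the conformal one, and any other choice would leave a stray bulk term that could not be controlled by either quantity on the right of the estimate.
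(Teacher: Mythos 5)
Your proof is correct, and it reaches the same final Young's-inequality step as the paper, but the intermediate algebra is organized quite differently. The paper's proof fixes the term $I_2=\int_M u^{-\frac{n}{n-2}}|\nabla u|^{2}\langle\nabla u,\nabla\psi^q\rangle$, rewrites it as $\frac{(n-2)^3}{8(n-1)}\int_M|\nabla u^{-\frac{2}{n-2}}|^2\langle\nabla u^{\frac{2(n-1)}{n-2}},\nabla\psi^q\rangle$, integrates by parts, splits $\nabla^2=\mathring{\nabla}^2+\frac{\Delta}{n}g$, uses the formula for $\Delta u^{-\frac{2}{n-2}}$ from the equation, and thereby lands on a ``self-referential'' relation in which $I_1,I_2,I_3$ all reappear; it then applies Young and solves for the target combination. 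You instead work with the full left-hand side, insert the PDE directly into the $I_1$-term, use the pointwise identity $\tfrac{\Delta u}{n}\nabla u=\tfrac12\nabla|\nabla u|^2-\mathring{\nabla}^2u(\nabla u)$, integrate by parts on $I_3$ so the two $\langle\nabla|\nabla u|^2,\nabla\psi^q\rangle$ pieces cancel, and then use the chain rule for $\mathring{\nabla}^2(u^{-\frac{2}{n-2}})$ to eliminate the remaining bulk term. The payoff of your organization is that it makes the exact identity
\begin{equation*}
-\tfrac{1}{n}I_1+I_2+\tfrac12 I_3=-\tfrac{(n-2)^2}{4}\int_M u^{\frac{2(n-1)}{n-2}}\,\mathring{\nabla}^2 u^{-\frac{2}{n-2}}\bigl(\nabla u^{-\frac{2}{n-2}},\nabla\psi^q\bigr)
\end{equation*}
explicit \emph{before} any inequality is used (the paper only has this implicitly, since it applies Young mid-computation); this exact identity is exactly what then yields the stated bound by Cauchy--Schwarz and Young, and it also isolates cleanly why the conformal exponent $-\tfrac{2}{n-2}$ is forced. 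Both routes use the same essential ingredients (the PDE, one integration by parts, the trace-free Hessian decomposition, the conformal change of variable, Young's inequality) and the same amount of computation; yours is a little more transparent about where the cancellation comes from.
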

\begin{proof} Let
$$
I_1= \int_M  Ku^{\frac{n}{n-2}}\langle \nabla u, \nabla\psi^q\rangle,\,\,\,\, I_2=\int_{M}u^{-\frac{n}{n-2}}|\nabla u|^{2}\langle\nabla u, \nabla \psi^q\rangle,\,\,\,\, I_3=\int_{M}u^{-\frac{2}{n-2}}|\nabla u|^{2}\Delta\psi^q.
$$
Integrating by parts, we obtain
\begin{align*}
I_2 &= \frac{(n-2)^3}{8(n-1)} \int_{M}|\nabla u^{-\frac{2}{n-2}}|^{2}\langle\nabla u^\frac{2(n-1)}{n-2}, \nabla \psi^q\rangle\\
&= -\frac{(n-2)^3}{8(n-1)} \int_{M} u^\frac{2(n-1)}{n-2} \langle \nabla |\nabla u^{-\frac{2}{n-2}}|^{2}, \nabla \psi^q\rangle-\frac{(n-2)^3}{8(n-1)} \int_{M} u^\frac{2(n-1)}{n-2}|\nabla u^{-\frac{2}{n-2}}|^{2} \Delta \psi^q\\
&=-\frac{(n-2)^3}{4(n-1)} \int_{M} u^\frac{2(n-1)}{n-2}  \nabla^2 u^{-\frac{2}{n-2}}\left(\nabla u^{-\frac{2}{n-2}}, \nabla\psi^q\right)-\frac{n-2}{2(n-1)} I_3 \\
&= -\frac{(n-2)^3}{4(n-1)} \int_{M} u^\frac{2(n-1)}{n-2}  \mathring{\nabla}^2 u^{-\frac{2}{n-2}}\left(\nabla u^{-\frac{2}{n-2}}, \nabla\psi^q\right)\\
&\quad-\frac{(n-2)^3}{4n(n-1)} \int_{M} u^\frac{2(n-1)}{n-2}  \Delta u^{-\frac{2}{n-2}}\langle \nabla u^{-\frac{2}{n-2}}, \nabla\psi^q\rangle-\frac{n-2}{2(n-1)} I_3.
\end{align*}
Since
$$
\Delta u^{-\frac{2}{n-2}}= \frac{2}{n-2}Ku^{\frac{2}{n-2}}+\frac{2n}{(n-2)^2}u^{-\frac{2(n-1)}{n-2}}|\nabla u|^2,
$$
using Young's inequality on the first integral in the last equality, we obtain
\begin{align*}
I_2 &\leq \frac{q(n-2)^3\eps}{8(n-1)} \int_M \psi^q u^\frac{2(n-1)}{n-2} \left| \mathring{\nabla}^2 u^{-\frac{2}{n-2}}\right|^2+\frac{q(n-2)^3}{8(n-1)\eps} \int_M u^{\frac{2(n-1)}{n-2}}\psi^{q-2}|\nabla u^{-\frac{2}{n-2}}|^2 |\nabla \psi|^2 \\
&\quad-\frac{(n-2)^3}{4n(n-1)} \int_{M} u^\frac{2(n-1)}{n-2}  \left(\frac{2}{n-2}Ku^{\frac{2}{n-2}}+\frac{2n}{(n-2)^2}u^{-\frac{2(n-1)}{n-2}}|\nabla u|^2\right)\langle \nabla u^{-\frac{2}{n-2}}, \nabla\psi^q\rangle\\
&\quad-\frac{n-2}{2(n-1)} I_3\\
&= \frac{q(n-2)^3\eps}{8(n-1)} \int_M \psi^q u^\frac{2(n-1)}{n-2} \left| \mathring{\nabla}^2 u^{-\frac{2}{n-2}}\right|^2+\frac{q(n-2)^3}{8(n-1)\eps} \int_M u^{\frac{2(n-1)}{n-2}}\psi^{q-2}|\nabla u^{-\frac{2}{n-2}}|^2 |\nabla \psi|^2 \\
&\quad+\frac{n-2}{n(n-1)}I_1+\frac{1}{n-1}I_2-\frac{n-2}{2(n-1)} I_3.
\end{align*}
Rearranging terms, we conclude.
\end{proof}

\subsection{Part II} We collect here all technical lemmas that we will need in the study of equation \eqref{eq-eq2}. We start showing the following key technical identity, which is the counterpart of Lemma \ref{l-BV}.

\begin{lemma}\label{lem4}
Let $(M^n,g)$, $n\geq 2$, be a Riemannian manifold. For any function $w\in C^{2}(M)$, any nonnegative $\eta \in C^{2}_{c}(M)$ and any real numbers $d,m\in\RR\setminus\{0\}$, the following identity holds:
\begin{align*}
&-\left(\frac{(n-1)m^2}{n}+md+\frac{d^2}{2}\right)\int_{M}\eta e^{(2m+d)w}|\nabla w|^{4}-\frac{n-1}{n}\int_{M}\eta e^{(2m+d)w}(\Delta w)^{2}\\
&\quad\quad-\left(\frac{2(n-1)m}{n}+\frac{3}{2}d\right)\int_{M}\eta e^{(2m+d)w}|\nabla w|^{2}\Delta w \\
&\quad\quad+\frac{1}{m^2}\int_M \eta e^{dw} \left| \mathring{\nabla}^2 e^{mw}\right|^2+\frac{1}{m^2}\int_M \eta e^{dw} \operatorname{Ric}\left(\nabla e^{mw}, \nabla e^{mw}\right)\\
&= (m+d)\int_{M}e^{(2m+d)w}|\nabla w|^{2}\langle\nabla w, \nabla \eta\rangle + \int_{M}e^{(2m+d)w}\Delta w \langle\nabla w, \nabla \eta\rangle+\frac12 \int_{M}e^{(2m+d)w}|\nabla w|^{2}\Delta\eta.
\end{align*}
\end{lemma}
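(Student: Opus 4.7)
The plan is to repeat the proof of Lemma \ref{l-BV} with the polynomial substitution $f=w^{t}$ replaced by the exponential substitution $f:=e^{mw}$. A direct computation gives
$$\nabla f=m\,e^{mw}\nabla w,\qquad \Delta f=m\,e^{mw}\Delta w+m^{2}\,e^{mw}|\nabla w|^{2},$$
so that $(\Delta f)^{2}$ and $\langle\nabla f,\nabla\Delta f\rangle$ become explicit expressions in $w$ that each carry an overall factor $e^{2mw}$; I would leave $|\mathring{\nabla}^{2}f|^{2}$ and $\operatorname{Ric}(\nabla f,\nabla f)$ in the compact form $|\mathring{\nabla}^{2}e^{mw}|^{2}$ and $\operatorname{Ric}(\nabla e^{mw},\nabla e^{mw})$. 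Applying the pointwise Bochner formula to $f$, multiplying by $m^{-2}\eta\,e^{dw}$ and integrating over $M$ then produces the coefficient $1/m^{2}$ in front of the two curvature integrals, while every other volume integrand carries the factor $\eta\,e^{(2m+d)w}$ prescribed by the statement.

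The remaining step is a short sequence of integrations by parts, carried out exactly as in Lemma \ref{l-BV}, on the three integrals
$$\int_{M}\eta\,e^{dw}\Delta\!\bigl(e^{2mw}|\nabla w|^{2}\bigr),\qquad \int_{M}\eta\,e^{(2m+d)w}\langle\nabla w,\nabla\Delta w\rangle,\qquad \int_{M}\eta\,e^{(2m+d)w}\langle\nabla w,\nabla|\nabla w|^{2}\rangle.$$
Whenever a derivative lands on the factor $e^{(2m+d)w}$ it produces either $(2m+d)|\nabla w|^{2}$ or, through $\Delta(\eta e^{dw})$, the combination $d\Delta w+d^{2}|\nabla w|^{2}$. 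Whenever it lands on $\eta$ it yields precisely the three boundary-type integrals on the right-hand side of the claimed identity, with the coefficients $m+d$, $1$ and $\tfrac{1}{2}$.

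The main (and essentially only) obstacle is the algebraic bookkeeping of all the parameters $m$, $d$, $m^{2}$, $md$, $d^{2}$, $m^{3}$, $m^{4}$ that appear after expanding $(\Delta f)^{2}$ and $\langle\nabla f,\nabla\Delta f\rangle$. After collecting like terms, the coefficient of $\int_{M}\eta e^{(2m+d)w}|\nabla w|^{4}$ should assemble from the four contributions $\tfrac{d^{2}}{2}$, $-\tfrac{m^{2}}{n}$, $-m^{2}$ and $m(2m+d)$ into $\tfrac{(n-1)m^{2}}{n}+md+\tfrac{d^{2}}{2}$; the coefficient of $\int_{M}\eta e^{(2m+d)w}(\Delta w)^{2}$ should collapse from $-\tfrac{1}{n}+1$ to $\tfrac{n-1}{n}$; and the coefficient of $\int_{M}\eta e^{(2m+d)w}|\nabla w|^{2}\Delta w$ should collapse from $\tfrac{d}{2}-\tfrac{2m}{n}+(2m+d)-m+m$ to $\tfrac{2(n-1)m}{n}+\tfrac{3d}{2}$, all appearing with the correct sign in the final identity. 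The hypothesis $m\neq 0$ is used only to divide by $m^{2}$.
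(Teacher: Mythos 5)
Your proposal is correct and follows exactly the same route as the paper's proof: substitute $f=e^{mw}$ into the traceless Bochner identity, multiply by $m^{-2}\eta e^{dw}$, integrate, integrate by parts on the three indicated integrals, and collect coefficients. The coefficient bookkeeping you report for the $|\nabla w|^4$, $(\Delta w)^2$, and $|\nabla w|^2\Delta w$ terms indeed reproduces the stated identity, matching the paper's intermediate step where the pre-integration-by-parts coefficients are $\tfrac{(n+1)m^2}{n}$, $\tfrac{1}{n}$, and $\tfrac{(n+2)m}{n}$ respectively.
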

\begin{proof}
  For $f=e^{mw}$, we have
  $$\nabla f=me^{mw}\nabla w\qquad\Delta f=me^{mw}\Delta w+m^2e^{mw}|\nabla w|^2.$$
  We use the Bochner formula
  \begin{align*}
  \frac{1}{2}\Delta|\nabla f|^2&=\left|\nabla^2f\right|^2+\operatorname{Ric}(\nabla f,\nabla f)+\langle \nabla f,\nabla\Delta f\rangle\\
  &=\left|\mathring{\nabla}^2f\right|^2+\frac{1}{n}(\Delta f)^2+\operatorname{Ric}(\nabla f,\nabla f)+\langle \nabla f,\nabla\Delta f\rangle\\
  &=\left|\mathring{\nabla}^2 e^{mw} \right|^2+\operatorname{Ric}(\nabla e^{mw},\nabla e^{mw})+\frac{m^2}{n}e^{2mw}(\Delta w)^2+\frac{m^4}{n}e^{2mw}|\nabla w|^4\\
  &\quad+\frac{2m^3}{n}e^{2mw}|\nabla w|^2\Delta w+m^3e^{2mw}\Delta w|\nabla w|^2 +m^2e^{2mw}\langle\nabla w,\nabla\Delta w\rangle\\
  &\quad+ m^4e^{2mw}|\nabla w|^4+m^3e^{2mw}\langle\nabla w,\nabla|\nabla w|^2\rangle
  \end{align*}
  Now we multiply by $m^{-2}\eta e^{dw}$ and integrate over $M$ to obtain
\begin{align}
  \nonumber&m^{-2}\int_M\eta e^{dw}\left|\mathring{\nabla}^2 e^{mw}\right|^2+m^{-2}\int_M\eta e^{dw}\operatorname{Ric}(\nabla e^{mw},\nabla e^{mw})+
  \frac{1}{n}\int_M\eta e^{(2m+d)w}(\Delta w)^2\\
  \label{eq-1}&\,\,\,\,\,+\frac{(n+1)m^2}{n}\int_M\eta e^{(2m+d)w}|\nabla w|^4+\frac{(2+n)m}{n}\int_M\eta e^{(2m+d)w}|\nabla w|^2\Delta w \\
  \nonumber&\,\,\,\,\,+\int_M\eta e^{(2m+d)w}\langle\nabla w,\nabla\Delta w\rangle+m\int_M\eta e^{(2m+d)w}\langle\nabla w,\nabla|\nabla w|^2\rangle\\
  \nonumber&\,\,\,\,\,-\frac{1}{2}\int_M\eta e^{dw}\Delta\left(e^{2mw}|\nabla w|^2\right)=0.
\end{align}
Integrating by parts
\begin{align*}
&\int_M\eta e^{(2m+d)w}\langle\nabla w,\nabla\Delta w\rangle\\
&\quad=-\int_Me^{(2m+d)w}\Delta w \langle\nabla\eta,\nabla w\rangle-(2m+d)\int_M\eta e^{(2m+d)w}\Delta w|\nabla w|^2-
\int_M\eta e^{(2m+d)w}(\Delta w)^2
\end{align*}
\begin{align*}
  & \int_M \eta e^{(2m+d)w}\langle\nabla w,\nabla|\nabla w|^2\rangle \\
  &\quad= -\int_Me^{(2m+d)w}|\nabla w|^2\langle\nabla\eta,\nabla w\rangle-\int_M\eta e^{(2m+d)w}|\nabla w|^2\Delta w-(2m+d)\int_M\eta e^{(2m+d)w}|\nabla w|^4
\end{align*}
and
\begin{align*}
&\int_M\eta e^{dw}\Delta\left(e^{2mw}|\nabla w|^2\right)\\
&\quad=\int_M e^{(2m+d)w}\Delta\eta|\nabla w|^2+2d\int_M e^{(2m+d)w}|\nabla w|^2\langle\nabla w,\nabla\eta\rangle\\
&\quad\,\,\,\,\, +d^2\int_M \eta e^{(2m+d)w}|\nabla w|^4+d\int_M\eta e^{(2m+d)w}|\nabla w|^2\Delta w
\end{align*}
Substituting these identities in \eqref{eq-1} and rearranging terms we obtain the result.
\end{proof}

We apply the previous identity to solutions of equation \eqref{eq-eq2}.

\begin{corollary}\label{cor2}
  Let $(M^2,g)$ be a Riemannian surface, with scalar curvature $R$. For any solution $w\in C^{2}(M)$ of $$-\Delta u=K e^u$$ with $K\in C^2(M)$, any nonnegative $\psi \in C^{2}_{c}(M)$ and any real number $q\geq 2 $ the following identity holds
\begin{align*}
8\int_M \psi^q e^{\frac{u}{2}} \left| \mathring{\nabla}^2 e^{-\frac{u}{2}}\right|^2&+4\int_M \psi^q e^{\frac{u}{2}} R \left|\nabla e^{-\frac{u}{2}}\right|^2+2\int_M \psi^q e^{\frac u2}\Delta K \\
&= 2\int_{M}K e^\frac{u}{2} \Delta \psi^q +\int_{M}e^{-\frac{u}{2}}|\nabla u|^{2}\Delta\psi^q.
\end{align*}
\end{corollary}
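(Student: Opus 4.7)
The plan is to apply Lemma~\ref{lem4} to $w = u$, $\eta = \psi^q$, with the specific choice of parameters $m = -\tfrac{1}{2}$ and $d = \tfrac{1}{2}$. This choice is dictated by two requirements: the factors $\mathring{\nabla}^2 e^{mw}$ and $e^{dw}$ must match the target quantities $\mathring{\nabla}^2 e^{-u/2}$ and $e^{u/2}$; and the relation $m = -d$ makes the first right-hand side term of Lemma~\ref{lem4} vanish (its coefficient being $m+d$) and, specifically for $n = 2$, also kills the coefficient of $\int_M \eta e^{(2m+d)w}|\nabla w|^{4}$, which simplifies to $-\tfrac{(m+d)^2}{2}$. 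With $m = -d = -\tfrac{1}{2}$ one has in addition $2m+d = -\tfrac{1}{2}$ and $\tfrac{1}{m^2} = 4$, so all remaining coefficients become completely explicit.

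\medskip

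Next I would substitute $\Delta u = -Ke^u$ to rewrite $(\Delta u)^2 = K^2 e^{2u}$ and $|\nabla u|^2 \Delta u = -Ke^u|\nabla u|^2$, and use the fact that in dimension two $\operatorname{Ric}(X,X) = \tfrac{R}{2}|X|^2$ for every vector $X$. Multiplying the resulting identity by $2$ produces
\begin{align*}
&8\int_M \psi^q e^{u/2}\bigl|\mathring{\nabla}^2 e^{-u/2}\bigr|^2 + 4\int_M \psi^q e^{u/2} R \bigl|\nabla e^{-u/2}\bigr|^2 - \int_M \psi^q K^2 e^{3u/2} + \tfrac{1}{2}\int_M \psi^q K e^{u/2} |\nabla u|^2 \\
&\qquad = -2\int_M K e^{u/2}\langle \nabla u, \nabla\psi^q\rangle + \int_M e^{-u/2}|\nabla u|^2 \Delta \psi^q,
\end{align*}
which already carries the correct coefficients $8$ and $4$ in front of the Hessian and curvature terms.

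\medskip

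It remains to eliminate the spurious $K^2 e^{3u/2}$ and $K e^{u/2}|\nabla u|^2$ terms and introduce the $\Delta K$ contribution. Writing $e^{u/2}|\nabla u|^2 = 2\langle\nabla u, \nabla e^{u/2}\rangle$ and integrating the divergence of $\psi^q K \nabla u$ against $e^{u/2}$, then using the equation once more, yields
\[
\tfrac{1}{2}\int_M \psi^q K e^{u/2}|\nabla u|^2 - \int_M \psi^q K^2 e^{3u/2} = -\int_M \psi^q e^{u/2}\langle \nabla K, \nabla u\rangle - \int_M K e^{u/2}\langle \nabla \psi^q, \nabla u\rangle.
\]
Two further integrations by parts on $\int_M \psi^q e^{u/2}\langle \nabla K, \nabla u\rangle = 2\int_M \psi^q \langle \nabla K, \nabla e^{u/2}\rangle$, first moving $\nabla e^{u/2}$ off to produce $\Delta K$ and $\langle \nabla \psi^q, \nabla K\rangle$, then moving $\nabla K$ off the latter to produce $\Delta \psi^q$, will express the same quantity as $\int_M K e^{u/2}\langle \nabla u, \nabla \psi^q\rangle + 2\int_M K e^{u/2}\Delta \psi^q - 2\int_M \psi^q e^{u/2}\Delta K$. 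Substituting both identities back into the equation above, the $K^2$ term, the $K|\nabla u|^2$ term, and one $\langle \nabla u, \nabla \psi^q\rangle$ term all cancel, leaving exactly the claimed identity.

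\medskip

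The main obstacle is purely computational: the chain of integrations by parts demands careful bookkeeping of signs and coefficients, and the cancellations close up only because the initial choice $m = -d = -\tfrac{1}{2}$ kills both the $|\nabla u|^{4}$ term and the $(m+d)$ right-hand side term of Lemma~\ref{lem4} at the outset, leaving an identity whose remaining pieces can be reshaped by integration by parts alone.
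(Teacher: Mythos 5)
Your proposal is correct and follows essentially the same route as the paper: apply Lemma~\ref{lem4} with $w=u$, $\eta=\psi^q$, substitute the equation $\Delta u = -Ke^u$, and handle the resulting $K$-dependent terms by repeated integration by parts to produce the $\Delta K$ and $\Delta\psi^q$ contributions. The only cosmetic difference is the order of operations: you specialize to $n=2$, $m=-\tfrac12$, $d=\tfrac12$ at the outset (so the $|\nabla u|^4$ and several right-hand side coefficients vanish immediately) and then organize the integrations by parts around the combined quantity $\tfrac12 K e^{u/2}|\nabla u|^2 - K^2 e^{3u/2}$, whereas the paper carries general $n,m,d$ through the integration by parts of $(\Delta u)^2$ and only specializes at the very end; the underlying cancellations are identical.
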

\begin{proof}
  We start from Lemma \ref{lem4} with $w=u$, $\eta=\psi^q$. Integrating by parts, using the equation on $u$, we obtain
  \begin{align*}
     \int_M \psi^q e^{(2m+d)u}(\Delta u)^2&=-\int_M \psi^q K e^{(2m+d+1)u}\Delta u\\
    &= \int_M K e^{(2m+d+1)u}\langle\nabla\psi^q,\nabla u\rangle+(2m+d+1)\int_M \psi^qK e^{(2m+d+1)u}|\nabla u|^2 \\
    &\quad + \int_M \psi^q e^{(2m+d+1)u} \langle \nabla K , \nabla u\rangle.
  \end{align*}
Assuming $2m+d+1\neq 0$, we have
\begin{align*}
\int_M &\psi^q e^{(2m+d+1)u} \langle \nabla K , \nabla u\rangle =\frac{1}{2m+d+1}\int_M \psi^q \langle\nabla K,\nabla e^{(2m+d+1)u}\rangle\\
&=-\frac{1}{2m+d+1}\int_M \psi^q e^{(2m+d+1)u} \Delta K-\frac{1}{2m+d+1}\int_M e^{(2m+d+1)u} \langle \nabla K,\nabla \psi^q\rangle\\
&=-\frac{1}{2m+d+1}\int_M \psi^q e^{(2m+d+1)u} \Delta K+\frac{1}{2m+d+1}\int_M K e^{(2m+d+1)u} \Delta \psi^q \\
&\quad+\int_M K e^{(2m+d+1)u}\langle \nabla u,\nabla \psi^q\rangle.
\end{align*}
Hence
\begin{align*}
&-\left(\frac{(n-1)m^2}{n}+md+\frac{d^2}{2}\right)\int_{M}\psi^q e^{(2m+d)u}|\nabla u|^{4}+\left(\frac{n+2}{2n}d-\frac{n-1}{n}\right)\int_{M}\psi^q K e^{(2m+d+1)u}|\nabla u|^{2} \\
&\quad\quad+\frac{1}{m^2}\int_M \psi^q e^{du} \left| \mathring{\nabla}^2 e^{mu}\right|^2+\frac{1}{m^2}\int_M \psi^q e^{du} \operatorname{Ric}\left(\nabla e^{mu}, \nabla e^{mu}\right)\\
&\quad\quad+\frac{n-1}{n(2m+d+1)}\int_M \psi^q e^{(2m+d+1)u} \Delta K\\
&= (m+d)\int_{M}e^{(2m+d)u}|\nabla u|^{2}\langle\nabla u, \nabla \psi^q\rangle +\frac{n-2}{n} \int_{M}K e^{(2m+d+1)u} \langle\nabla u, \nabla \psi^q\rangle\\
&\quad+\frac12 \int_{M}e^{(2m+d)u}|\nabla u|^{2}\Delta\psi^q+\frac{n-1}{n(2m+d+1)}\int_M K e^{(2m+d+1)u} \Delta \psi^q.
\end{align*}
  We have $n=2$, hence $\operatorname{Ric}=\frac{1}{2}Rg$, and we choose $m=-\frac{1}{2}$, $d=\frac{1}{2}$ so that
  \begin{align*}
  4\int_M \psi^q e^\frac{u}{2} \left| \mathring{\nabla}^2 e^{-\frac{u}{2}}\right|^2&+2\int_M \psi^q e^\frac{u}{2} R\left|\nabla e^{-\frac{u}{2}}\right|^2+\int_M \psi^q e^{\frac u2}\Delta K\\
   &= \int_{M}K e^{\frac{u}{2}} \Delta \psi^q+\frac12 \int_{M}e^{-\frac{u}{2}}|\nabla u|^{2}\Delta\psi^q.
  \end{align*}
\end{proof}

\subsection{Part III} We collect here some general lemmas that we will need in the proofs of our main theorems.
The first is a lower bound for positive superharmonic functions on Riemannian manifold with nonnegative Ricci curvature. We include a proof for completeness.
\begin{lemma}\label{lem2} Let $(M^n,g)$, $n\geq 2$, be a complete Riemannian manifold with nonnegative Ricci curvature and let $u\in C^2(M)$ be a positive superharmonic function outside a compact set of $M$, i.e. $\Delta u \leq 0$ on $M\setminus\Omega$ with $\Omega$ compact. Then there exist positive constants $\rho,A>0$ such that
$$
u(x) \geq \frac{A}{r(x)^{n-2}}\quad\text{on }B_\rho^c.
$$
\end{lemma}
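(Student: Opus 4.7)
The strategy is to compare $u$ from below, on large annuli around the basepoint, with explicit subharmonic radial model functions, and then let the outer radius tend to infinity. On a complete manifold with $\operatorname{Ric}\geq 0$, the Laplacian comparison theorem gives $\Delta r \leq (n-1)/r$ in the distributional sense (across the cut locus of the basepoint). Since $\Delta f(r) = f''(r) + f'(r)\Delta r$ for a radial $f$, specializing to $f(r) = r^{2-n}$ if $n\geq 3$ and to $f(r) = -\log r$ if $n=2$ yields
\begin{equation*}
\Delta r^{2-n} \geq 0 \ \ (n\geq 3), \qquad \Delta(-\log r) \geq 0 \ \ (n=2),
\end{equation*}
weakly on $M$ away from the basepoint.

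Next, choose $\rho>0$ large enough that $\Omega \subset B_\rho$, so that $\Delta u \leq 0$ on $M\setminus \overline{B_\rho}$, and set $m := \min_{\partial B_\rho} u > 0$. For each $R>\rho$, consider on the annulus $A_R := B_R\setminus \overline{B_\rho}$ the comparison function
\begin{equation*}
v_R(x) := \begin{cases} \dfrac{m\,\bigl(r(x)^{2-n} - R^{2-n}\bigr)}{\rho^{2-n} - R^{2-n}} & \text{if } n\geq 3,\\[6pt] \dfrac{m\,\log(R/r(x))}{\log(R/\rho)} & \text{if } n=2. \end{cases}
\end{equation*}
By the previous paragraph $v_R$ is weakly subharmonic on $A_R$; by construction it also satisfies $v_R = m \leq u$ on $\partial B_\rho$ and $v_R = 0 \leq u$ on $\partial B_R$. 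Hence $v_R - u$ is weakly subharmonic on $A_R$ and nonpositive on $\partial A_R$, and the weak maximum principle gives $v_R \leq u$ throughout $A_R$. Letting $R\to+\infty$ yields $u(x)\geq m\rho^{n-2}\,r(x)^{2-n}$ when $n\geq 3$ and $u(x)\geq m$ when $n=2$; in both cases this is exactly the claimed bound $u(x)\geq A/r(x)^{n-2}$ with $A := m\rho^{n-2}$ (noting $\rho^0=1$).

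The only real subtlety is that $r$ is not smooth on the cut locus of the basepoint, so the Laplacian comparison and the subsequent maximum principle must be interpreted in a weak sense; this is the one technical point of the argument. It can be handled in the standard way, either by invoking the distributional form of the Laplacian comparison theorem together with the weak maximum principle for distributional subsolutions, or by Calabi's upper-barrier trick at any prospective interior minimum point of $u - v_R$: one approximates $r$ from above by the smooth distance to a slightly shifted basepoint along a minimizing geodesic, carries out the comparison smoothly for every approximation, and then passes to the limit.
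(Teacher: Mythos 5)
Your argument is correct, and it rests on the same underlying idea as the paper's: build a radial comparison function whose (weak) sub/superharmonicity follows from $\Delta r\le (n-1)/r$ under $\operatorname{Ric}\ge 0$, and invoke a maximum principle. The implementation differs in two ways. First, the paper works directly on the unbounded set $B_\rho^c$, defining $v=u-A/r^{\alpha}$ there, noting $v\ge 0$ on $\partial B_\rho$ and $\liminf_{r(x)\to\infty}v(x)\ge 0$, and then using the strong maximum principle to rule out an interior negative minimum; you instead exhaust $B_\rho^c$ by bounded annuli $B_R\setminus\overline{B_\rho}$, apply the weak maximum principle on each annulus with the truncated barrier $v_R$, and then let $R\to\infty$. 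Your variant is more elementary in that it only needs the weak maximum principle on a bounded domain and avoids any argument about $v$ at infinity, at the mild cost of an extra limiting step. Second, for $n=2$ the paper uses the power barrier $r^{-\alpha}$ and sends $\alpha\to 0^{+}$ to recover $u\ge m$, whereas you use the natural logarithmic barrier $\log(R/r)/\log(R/\rho)$ and get the same conclusion directly; this is arguably cleaner. You also correctly flag the only real subtlety, namely the nonsmoothness of $r$ on the cut locus, which the paper handles by interpreting the Laplacian comparison weakly, exactly as you suggest (alternatively, Calabi's barrier trick). Altogether the proposal is a sound, self-contained proof.
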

\begin{proof} Let $\rho>0$ be such that $\Delta u\leq0$ on $B_\rho^c$. On $B_\rho^c$ let
$$
v(x):=u(x)- \frac{A}{r(x)^{\alpha}},
$$
with $\alpha=n-2$ if $n\geq3$, $\alpha>0$ if $n=2$ and where $A:=\rho^\alpha\min_{\partial B_\rho} u>0$. Then $v\geq 0$ on $\partial B_\rho$ and
$$
\Delta v\leq 0,
$$
since $\operatorname{Ric}\geq0$ implies $\Delta r \leq \frac{n-1}{r}$ weakly on $B_\rho^c$ by classical Laplacian comparison. Since $\liminf_{r(x)\to\infty} v (x) \geq 0$, if $\inf_{B_\rho^c} v<0$, then $v$ attains its negative absolute minimum at a point in $\overline{B_\rho}^c$. By the strong maximum principle, then $v$ must be constant and negative on its domain, a contradiction. Thus $v\geq 0$ in $B^c_\rho$. The proof is complete if $n\geq3$. If $n=2$, for every $x\in B_\rho^c$ we have $$u(x)\geq \frac{\rho^\alpha\min_{\partial B_\rho}u}{r(x)^\alpha}$$ for every $\alpha>0$, and we conclude passing to the limit as $\alpha$ tends to $0$.
\end{proof}

The next lemma shows that, under suitable assumptions on the potential $K$, subsolutions of the equation \eqref{eq-eq} with $u^{p+1}|K|\in L^1(M)$ automatically have finite energy.

\begin{lemma}\label{lem3} Let $(M^n,g)$, $n\geq 2$, be a  Riemannian manifold and let  $u\in C^{2}(M)$ be a nonnegative solution of $$-\Delta u\leq Ku^p,$$ with $1<p\leq p_c$ and $K\in C^0(M)$. If $K>0$ almost everywhere outside a compact set of $M$, $|K|u^{p+1}\in L^1(M)$ and $$\int_{B_{2\rho}\setminus B_\rho}K^{-\frac{2}{p-1}}\leq C\rho^{2\frac{p+1}{p-1}}\quad\text{ for every }\rho\text{ large enough,}$$ then $|\nabla u|\in L^2(M)$, i.e. $u\in D^{1,2}_{|K|}(M)$.
\end{lemma}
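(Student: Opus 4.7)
The plan is a standard cutoff-plus-H\"older energy estimate. Let $\varphi_\rho\in C^\infty_c(M)$ be a smooth cutoff with $\varphi_\rho\equiv 1$ on $B_\rho$, $\operatorname{supp}\varphi_\rho\subset B_{2\rho}$, and $|\nabla\varphi_\rho|\leq C/\rho$. Testing the inequality $-\Delta u\leq Ku^p$ against the nonnegative function $u\,\varphi_\rho^2$ and integrating by parts yields
\begin{equation*}
\int_M |\nabla u|^2\varphi_\rho^2 + 2\int_M u\,\varphi_\rho\langle\nabla u,\nabla\varphi_\rho\rangle \leq \int_M Ku^{p+1}\varphi_\rho^2,
\end{equation*}
and after absorbing the cross term via Young's inequality one obtains
\begin{equation*}
\tfrac12\int_M |\nabla u|^2\varphi_\rho^2 \leq \int_M Ku^{p+1}\varphi_\rho^2 + 2\int_M u^2|\nabla\varphi_\rho|^2.
\end{equation*}
The first integral on the right is uniformly controlled by $\int_M|K|u^{p+1}<+\infty$, so everything reduces to a uniform bound on the second integral.

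Take $\rho$ so large that $\operatorname{supp}|\nabla\varphi_\rho|\subset B_{2\rho}\setminus B_\rho$ lies entirely in the region where $K>0$ almost everywhere; then $K^{-2/(p-1)}$ is meaningfully integrable on this annulus. I would then apply H\"older's inequality with the conjugate exponents $\frac{p+1}{2}$ and $\frac{p+1}{p-1}$, writing $u^2=(Ku^{p+1})^{2/(p+1)}K^{-2/(p+1)}$, to get
\begin{equation*}
\int_{B_{2\rho}\setminus B_\rho} u^2 \leq \Bigl(\int_M |K|u^{p+1}\Bigr)^{\!\frac{2}{p+1}}\Bigl(\int_{B_{2\rho}\setminus B_\rho} K^{-\frac{2}{p-1}}\Bigr)^{\!\frac{p-1}{p+1}}.
\end{equation*}
The growth hypothesis $\int_{B_{2\rho}\setminus B_\rho}K^{-2/(p-1)}\leq C\rho^{2(p+1)/(p-1)}$ produces a factor of $\rho^2$ from the second bracket, which is exactly cancelled by the factor $|\nabla\varphi_\rho|^2\leq C/\rho^2$; consequently $\int_M u^2|\nabla\varphi_\rho|^2\leq C$ uniformly in $\rho$.

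Combining the two uniform bounds gives $\int_M |\nabla u|^2\varphi_\rho^2\leq C$ independently of $\rho$, so monotone convergence as $\rho\to+\infty$ delivers $|\nabla u|\in L^2(M)$. I do not expect any serious technical obstacle: the exponent $\frac{2(p+1)}{p-1}$ in the hypothesis on $K^{-2/(p-1)}$ has been calibrated precisely so that the H\"older estimate and the cutoff scaling balance, while the compact set on which $K$ is allowed to vanish or change sign contributes nothing since $\nabla\varphi_\rho$ vanishes there for $\rho$ large enough.
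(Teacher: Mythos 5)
Your proposal is correct and follows essentially the same route as the paper's own proof: test $-\Delta u\leq Ku^p$ against $u\varphi_\rho^2$, absorb the cross term via Young's inequality, estimate $\int u^2|\nabla\varphi_\rho|^2$ by H\"older with exponents $\tfrac{p+1}{2}$ and $\tfrac{p+1}{p-1}$ using the factorization $u^2=(Ku^{p+1})^{2/(p+1)}K^{-2/(p+1)}$ on the annulus, and invoke the growth hypothesis on $\int_{B_{2\rho}\setminus B_\rho}K^{-2/(p-1)}$ to cancel the $\rho^{-2}$ from the cutoff. The only cosmetic difference is that the paper keeps a free parameter $\varepsilon$ in the Young step where you fix $\varepsilon=\tfrac12$, and you make explicit the point that for $\rho$ large the annulus avoids the compact set where $K$ might vanish, which the paper leaves implicit.
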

\begin{proof} For any $\rho>1$, let $\psi\in C^2_c(M)$ be such that $\psi\equiv 1$ in $B_\rho$, $\psi\equiv 0$ in $B_{2\rho}^c$, $0\leq \psi\leq 1$ on $M$ and $\psi$ satisfies
$$
|\nabla \psi|^{2} \leq C \rho^{-2}\quad\text{in }B_{2\rho}\setminus B_{\rho}.
$$
Then, for every $0<\eps<1$ we obtain
\begin{align*}
\int_M K\psi^2 u^{p+1} &\geq -\int_M \psi^2 u \Delta u  \\
&\geq \int_M \psi^2 |\nabla u|^2 -2 \int_M \psi u |\nabla u||\nabla \psi|\\
&\geq (1-\eps)\int_M \psi^2 |\nabla u|^2 - \frac{1}{\eps} \int_M u^2 |\nabla \psi|^2.
\end{align*}
For every large enough $\rho$ we have
\begin{align*}
\int_M u^2 |\nabla \psi|^2 &\leq \frac{C}{\rho^2}\left(\int_{B_{2\rho}} |K|u^{p+1} \right)^{\frac{2}{p+1}}\left(\int_{B_{2\rho}\setminus B_\rho}K^{-\frac{2}{p-1}}\right)^\frac{p-1}{p+1}\leq C  \left(\int_{B_{2\rho}} |K|u^{p+1} \right)^{\frac{2}{p+1}}.
\end{align*}
By the previous estimate, we obtain
$$
\int_{B_{\rho}}|\nabla u|^2 \leq C \left(\int_{B_{2\rho}} |K|u^{p+1} \right)^{\frac{2}{p+1}}+ C \int_{B_{2\rho}}  |K|u^{p+1}.
$$
Passing to the limit as $\rho\to\infty$, since $u^{p+1}|K|\in L^1(M)$, we obtain the result.
\end{proof}
Note that if $$K\geq\frac{C}{(1+r(x))^\lambda}\quad\text{ on }M$$
  for some $\lambda\leq\frac{(n+2)-p(n-2)}{2}$, then by Bishop-Gromov volume comparison theorem we have
  $$\int_{B_{2\rho}\setminus B_\rho}K^{-\frac{2}{p-1}}\leq C\rho^{n+\lambda\frac{2}{p-1}}\leq C\rho^{2\frac{p+1}{p-1}}$$ for every $\rho>0$. In particular for the critical equation with $p=p_c=\frac{n+2}{n-2}$, the condition on $K$ reads as
$$K\geq C>0\quad\text{on } M.$$
Note that these conditions are satisfied if $K$ is constant and positive on $M$.

\medskip

We finally recall the following characterization of conformal gradient vector fields which can be deduced from general results in \cite{tas} and \cite{catmanmaz} and which will be used when dealing with critical equations.
\begin{lemma}\label{lem5}
  Let $(M^n,g)$, $n\geq2$, be a complete noncompact Riemannian manifold with nonnegative Ricci curvature and assume there exists a nonconstant function $f$ on $M$ such that $$\mathring{\nabla}^2f\equiv0,\quad \operatorname{Ric}(\nabla f,\nabla f)\equiv0\qquad\text{ on }M.$$
  Then $(M^n,g)$ is isometric to either
   \begin{itemize}
     \item[i)] a direct product $(\mathbb{R}\times N^{n-1}, dr^2+g_N)$, where $(N^{n-1},g_N)$ is a $(n-1)$-dimensional complete Riemannian manifold with nonnegative Ricci curvature and $f=f(r)=ar+b$ for some $a,b\in\mathbb{R}$ with $a\neq0$;
     \item[ii)] $\mathbb{R}^n$ with the Euclidean metric and $f=f(x)=a|x-x_0|^2+b$ for some $x_0\in\mathbb{R}^n$ and $a,b\in\mathbb{R}$ with $a\neq0$.
   \end{itemize}
      \end{lemma}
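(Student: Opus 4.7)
The hypothesis $\mathring{\nabla}^2 f\equiv 0$ is equivalent to $\nabla^2 f=\phi\,g$ with $\phi:=\Delta f/n$. My plan is to first show that $\phi$ must be constant, and then to handle the two resulting cases separately, appealing to Tashiro-type classifications of gradient conformal vector fields.

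To prove that $\phi$ is constant, I would differentiate $\nabla_i\nabla_j f=\phi\,g_{ij}$, commute covariant derivatives, and trace, obtaining the classical identity
$$(n-1)\,\nabla_i\phi=-R_{ij}\,\nabla^j f.$$
Contracting with $\nabla^i f$ and using the hypothesis $\operatorname{Ric}(\nabla f,\nabla f)\equiv 0$ gives $\langle\nabla\phi,\nabla f\rangle\equiv 0$ on $M$. Independently, contracting $\nabla_i\nabla_j f=\phi\,g_{ij}$ against $\nabla^j f$ yields $\nabla|\nabla f|^2=2\phi\,\nabla f$, so on each connected component of $\Omega:=\{\nabla f\neq 0\}$ the function $|\nabla f|^2$ depends only on $f$; consequently $\phi$ itself depends only on $f$ there, and $\nabla\phi$ is parallel to $\nabla f$ on $\Omega$. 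The orthogonality just derived then forces $\nabla\phi\equiv 0$ on $\Omega$, and since $f$ is nonconstant $\Omega$ is open and dense, so by continuity $\phi\equiv c$ on $M$ for some $c\in\mathbb{R}$.

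In case $c=0$ we have $\nabla^2 f\equiv 0$, so $\nabla f$ is a nontrivial parallel vector field on the complete manifold $(M,g)$; the de Rham splitting theorem then gives $(M,g)\cong(\mathbb{R}\times N^{n-1},dr^2+g_N)$ with $f=f(r)=ar+b$, $a\neq 0$, and $N$ inheriting $\operatorname{Ric}\geq 0$ from $M$. In case $c\neq 0$, the relation $\nabla^2 f=cg$ combined with completeness places us within the framework of Tashiro's classification \cite{tas}. After possibly replacing $f$ by $-f$, we may take $c>0$, and the first integral $|\nabla f|^2=2cf+k$ together with an ODE argument along trajectories of $-\nabla f/|\nabla f|$ shows that $f$ attains its infimum at some point $p_0\in M$ in finite arc length; strict convexity of $f$ along geodesics then shows $p_0$ is the unique critical point. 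Along any unit-speed geodesic $\gamma$ from $p_0$, $f\circ\gamma(t)=f(p_0)+ct^2/2$, so in geodesic polar coordinates around $p_0$ we have $f-f(p_0)=cr^2/2$. Plugging this back into $\nabla^2 f=cg$ gives $\nabla^2 r=\tfrac{1}{r}(g-dr\otimes dr)$, the Euclidean Hessian of the distance function; the Riccati equation along radial geodesics then forces all radial sectional curvatures to vanish, so $\exp_{p_0}$ is a global isometry onto $\mathbb{R}^n$, yielding $(M,g)\cong(\mathbb{R}^n,g_{\operatorname{Eucl}})$ and $f(x)=a|x-x_0|^2+b$ with $a=c/2$.

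The delicate step is case $c\neq 0$: rigorously closing the loop from the pointwise identities $\nabla^2 f=cg$, $\operatorname{Ric}(\nabla f,\nabla f)\equiv 0$, $\operatorname{Ric}\geq 0$ to a global Euclidean isometry requires carefully handling the injectivity and surjectivity of the exponential map at $p_0$, and verifying that the tangential geometry along radial geodesics is Euclidean and not merely asymptotically so. This is where the references \cite{tas} and \cite{catmanmaz} enter decisively, providing the warped-product-to-Euclidean identification once the sign constraint coming from $\operatorname{Ric}\geq 0$ is in place.
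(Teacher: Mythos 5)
The paper provides no proof of this lemma; it simply cites Tashiro~\cite{tas} and Catino--Mantegazza--Mazzieri~\cite{catmanmaz}, so your proposal is doing genuinely more work by sketching the reduction. Your overall plan is sound and, at the level of strategy, is exactly how one would deduce the statement from those references: show $\phi=\Delta f/n$ is constant using the two hypotheses, then invoke de~Rham splitting when $\phi\equiv 0$ and the Obata/Tashiro ``$\nabla^2 f=cg$ on a complete manifold forces Euclidean space'' argument when $\phi=c\neq 0$.

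Two remarks. First, your route to $\nabla\phi\equiv 0$ is more roundabout than necessary and contains an unjustified step: you assert that $\Omega=\{\nabla f\neq 0\}$ is dense ``since $f$ is nonconstant,'' which is true but not for that reason (a priori $\nabla f$ could vanish on an open set; one has to note that on $\operatorname{int}(M\setminus\Omega)$ also $\phi\equiv 0$, so $\nabla\phi$ vanishes on the open dense set $\Omega\cup\operatorname{int}(M\setminus\Omega)$ and hence everywhere by continuity). A cleaner path avoids this entirely: since $\operatorname{Ric}\geq 0$ is a nonnegative symmetric bilinear form, $\operatorname{Ric}(\nabla f,\nabla f)\equiv 0$ forces $\operatorname{Ric}(\nabla f,\cdot)\equiv 0$ as a one-form (if $Q\ge 0$ and $Q(X,X)=0$ then $Q(X,\cdot)=0$), and then the commutation identity $(n-1)\nabla_i\phi=-R_{ij}\nabla^j f$ gives $\nabla\phi\equiv 0$ directly, with no discussion of $\Omega$. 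Second, in case $c\neq 0$, once $\phi$ is constant the hypothesis $\operatorname{Ric}\geq 0$ plays no further role (contrary to your closing sentence): $\nabla^2 f=cg$ with $c\neq 0$ on a complete manifold already yields $f=f(p_0)+\tfrac{c}{2}r^2$ and $\nabla^2 r=\tfrac{1}{r}(g-dr\otimes dr)$, which integrates to $g_r=r^2 h$ along geodesic spheres, and smoothness of $g$ at $p_0$ forces $h$ to be the round metric, hence $g$ Euclidean; the injectivity of $\exp_{p_0}$ follows from the identity $\nabla f(\gamma(t))=ct\,\gamma'(t)$ along unit-speed geodesics issuing from $p_0$. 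With those clarifications, your argument is a correct and self-contained account of what the paper leaves to the references.
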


\

\section{Proof of Theorem \ref{t-sub}}\label{s-sub}

\begin{proof} We start considering the case of a Riemannian surface $(M^2,g)$ with nonnegative scalar curvature $R$. As we already observed, by Bishop-Gromov volume estimate $(M^2, g)$ must be parabolic (see for instance \cite{gri}) and therefore every positive superharmonic function is constant. Thus any solution $u\geq 0$ of \eqref{eq-eq} is zero. We provide here a self-contained proof. Suppose by contradiction that $u$ is not identically $0$, then by the strong maximum principle $u>0$ on $M$. Let $\psi\in Lip_c(M)$ be a nonnegative cutoff function, $\delta\in(-p,-1)$ and $\varepsilon=|\delta|$, then we have
\begin{align}
  \int_MK\psi^2 u^{p+\delta}&=-\int_M\psi^2 u^{\delta}\Delta u=2\int_M\psi\langle\nabla\psi,\nabla u\rangle u^{\delta}+\delta\int_M\psi^2 u^{\delta-1}|\nabla u|^2\\
 \label{eq-4}&\leq\frac{1}{\varepsilon}\int_M|\nabla\psi|^2u^{\delta+1}+(\varepsilon+\delta)\int_M\psi^2 u^{\delta-1}|\nabla u|^2=\frac{1}{\varepsilon}\int_M|\nabla\psi|^2u^{\delta+1}
 \end{align}
Let $\Phi\in C^1(\mathbb{R})$ be a nonnegative function such that $0\leq\Phi\leq1$, $\Phi'\leq0$, $\Phi=1$ on $(-\infty,1]$, $\Phi=0$ on $[2,-\infty)$ and $|\Phi'|\leq C$ for some positive constant $C$. For any $\rho>1$ define $\psi=\Phi\left(\frac{2\log r}{\log \rho}\right)$, then we have $\psi\equiv1$ on $B_{\sqrt\rho}$, $\psi\equiv0$ on $B_{\rho}^c$ and $|\nabla\psi|\leq\frac{C}{\log\rho\, r}$ on $B_{\rho}\setminus B_{\sqrt\rho}$.
With this choice of $\psi$, \eqref{eq-4} and Lemma \ref{lem2} yield
$$
  \int_MK\psi u^{p+\delta}\leq \frac{C}{(\log\rho)^2}\int_{B_{\rho}\setminus B_{\sqrt\rho}}\frac{1}{r^2}.
$$
Let $V(r)=\operatorname{Vol}(B_r)$ and $S(r)=\operatorname{meas}(\partial B_r)$ be the area of $B_r$ and the length of $\partial B_r$ respectively. Then by the coarea formula $V'=S$, and using the coarea formula, an integration by parts and Bishop-Gromov volume comparison theorem we obtain
\begin{align*}
  \int_{B_{\rho}\setminus B_{\sqrt\rho}}\frac{1}{r^2} & =\int_{\sqrt\rho}^\rho \frac{1}{r^2} S(r)\,dr=\left[\frac{1}{r^2} V(r)\right]_{\sqrt\rho}^\rho+2\int_{\sqrt\rho}^\rho\frac{1}{r^3} V(r)\,dr\\
   &\leq C\left(1+\int_{\sqrt\rho}^\rho\frac{1}{r}\right)\leq C(1+\log \rho).
\end{align*}
Hence we have
$$\int_MK\psi u^{p+\delta}\leq \frac{C}{\log\rho}$$
and passing to the limit as $\rho$ tends to $\infty$ we deduce that $u=0$ on the set $\{x\in M\,|\,K(x)>0\}$. This is a contradiction, since we assumed $u>0$ on $M$.

Now we consider the case when $(M^n,g)$, $n\geq 3$, is a Riemannian manifold with nonnegative Ricci curvature. Let $u\in C^{2}(M)$ be a nonnegative solution of \eqref{eq-eq} with $1<p<p_c=\frac{n+2}{n-2}$. By the strong maximum principle, $u\equiv 0$ or $u>0$ on $M$. We assume by contradiction that $u>0$ on $M$ and we apply Corollary \ref{cor3} with $\psi\in C^2_c(M)$, $0\leq \psi \leq 1$ on $M$, $q>4$ and with $m,d$ satisfying
$$m=-2\quad\text{ and }d=2+\frac{2}{5}p\qquad\text{ if }n=3,$$ or
$$2\frac{n-1}{n+2}p<d<2\frac{n-1}{n-2},$$ and
$$\frac{-d-\sqrt{dn(2(n-1)-d(n-2))}}{n-1}<m<\frac{-d+\sqrt{dn(2(n-1)-d(n-2))}}{n-1}$$
when $n\geq4$. Note that the condition on $d$ is meaningful, since $p<p_c=\frac{n+2}{n-2}$, and that with these choices $m>-2$ when $n\geq4$.
Then we have
\begin{align}\label{eq-ciao}
&\alpha\int_{M}\psi^q u^{m-2}|\nabla u|^{4} + \beta \int_M \psi^qK u^{m+p-1}|\nabla u|^2\\
&\qquad\leq\frac{n-2}{n}I_1+\frac{m+d}{2}I_2+\frac12 I_3+\frac{(n-1)}{n(m+p+1)}\int_M\Delta\psi^q\,Ku^{m+p+1},
\end{align}
where
$$
I_1=\int_M  Ku^{m+p}\langle \nabla u, \nabla\psi^q\rangle,\quad I_2=\int_{M}u^{m-1}|\nabla u|^{2}\langle\nabla u, \nabla \psi^q\rangle,\quad I_3=\int_{M}u^m|\nabla u|^{2}\Delta\psi^q
$$
and with $\alpha,\beta>0$ for every $n\geq3$. We now bound the integrals $I_1, I_2$ and $I_3$. By using Young's inequality, for any $\eps>0$, there exists $C(\eps)>0$ such that
\begin{align}\label{eq-I1}
I_1 &\leq \eps \int_{M}\psi^{q}Ku^{m+p-1}|\nabla u|^{2} + C(\eps)\int_{M}\psi^{q-2}Ku^{m+p+1}|\nabla \psi|^{2},\\
I_2 &\leq \eps \int_{M} \psi^{q}u^{m-2}|\nabla u|^{4}+C(\eps)\int_{M}\psi^{q-4}u^{m+2}|\nabla \psi|^{4},\\
I_3 &= q(q-1)\int_{M}\psi^{q-2}u^{m}|\nabla u|^{2}|\nabla \psi|^{2}+q\int_{M}\psi^{q-1}u^{m}|\nabla u|^{2}\Delta \psi\nonumber \\
&\leq \eps \int_{M}\psi^{q}u^{m-2}|\nabla u|^{4}+C(\eps)\int_{U}\psi^{q-4}u^{m+2}|\nabla \psi|^{4}+C(\eps)\int_{M}\psi^{q-2}u^{m+2}|\Delta \psi|^{2}\nonumber\\
&\leq \eps \int_{M}\psi^{q}u^{m-2}|\nabla u|^{4}+C(\eps)\int_{U}\psi^{q-4}u^{m+2}|\nabla \psi|^{4}+C(\eps)\int_{U}\psi^{q-4}u^{m+2}|\Delta \psi|^{2}\nonumber\\
&= \eps \int_{M}\psi^{q}u^{m-2}|\nabla u|^{4}+C(\eps)\int_{M}\psi^{q-4}u^{m+2}\bigl(|\nabla \psi|^{4}+|\Delta \psi|^{2}\bigr),
\end{align}
since $0\leq\psi\leq 1$ everywhere. Substituting the above inequalities for $\eps$ small enough into \eqref{eq-ciao}, we obtain
\begin{align}\label{eq-est2}
&\int_{M}\psi^q u^{m-2}|\nabla u|^{4} + \int_M \psi^q Ku^{m+p-1}|\nabla u|^2 \\
&\quad\leq C \int_{M}\psi^{q-2}Ku^{m+p+1}\bigl(|\nabla \psi|^{2}+|\Delta\psi|\bigr)+ C \int_{M}\psi^{q-4}u^{m+2}\bigl(|\nabla \psi|^{4}+|\Delta \psi|^{2}\bigr)
\end{align}
for some positive constant $C$. Multiplying equation \eqref{eq-eq} by $Ku^{m+p}\psi^q$ and integrating by parts, we get
\begin{align*}
\int_M \psi^qK^2 u^{2p+m} &= I_1 + (m+p)\int_M \psi^q Ku^{m+p-1}|\nabla u|^2+\int_M\psi^qu^{p+m}\langle\nabla K,\nabla u\rangle\\
&= I_1 + (m+p)\int_M \psi^q Ku^{m+p-1}|\nabla u|^2+\frac{1}{p+m+1}\int_M\psi^q\langle\nabla K,\nabla u^{p+m+1}\rangle\\
&= I_1 + (m+p)\int_M \psi^q Ku^{m+p-1}|\nabla u|^2-\frac{1}{p+m+1}\int_M\psi^q\Delta K\, u^{p+m+1}\\
&\,\,\,\,\,\,\,-\frac{1}{p+m+1}\int_M\langle\nabla\psi^q,\nabla K\rangle u^{p+m+1}\\
&= 2I_1 + (m+p)\int_M \psi^q Ku^{m+p-1}|\nabla u|^2-\frac{1}{p+m+1}\int_M\psi^q\Delta K\, u^{p+m+1}\\
&\,\,\,\,\,\,\,+\frac{1}{p+m+1}\int_M\Delta\psi^q\, K u^{p+m+1}
\end{align*}
By \eqref{eq-I1} and \eqref{eq-est2} and since $\Delta K\geq0$ we have for some $C>0$
$$
\int_M \psi^q K^2u^{2p+m} \leq C \int_{M}\psi^{q-2}Ku^{m+p+1}\bigl(|\nabla \psi|^{2}+|\Delta\psi|\bigr)+ C \int_{M}\psi^{q-4}u^{m+2}\bigl(|\nabla \psi|^{4}+|\Delta \psi|^{2}\bigr).
$$
We explicitly note that by our choice of $m$ we have $2p+m>p+m+1>0$. By Young's inequality, if $q\geq \frac{2(2p+m)}{p-1}$, we have
\begin{equation*}
\int_M \psi^q K^2u^{2p+m} \leq  C\int_M \psi^{q-\frac{2(2p+m)}{p-1}}K^{-\frac{m+2}{p-1}}\left(|\nabla\psi|^2+|\Delta\psi|\right)^{\frac{2p+m}{p-1}}.
\end{equation*}
Since $(M^n,g)$ has nonnegative Ricci curvature, it is possible to construct cutoff functions such that $\psi\equiv 1$ in $B_\rho$, $\psi\equiv 0$ in $B_{2\rho}^c$, $0\leq \psi\leq 1$ on $M$ and satisfying (see~[Lemma 1.5]\cite{wanzhu} and \cite{checol}, for instance)
$$
|\Delta \psi|+|\nabla \psi|^{2} \leq C \rho^{-2}\quad\text{in }B_{2\rho}\setminus B_{\rho}.
$$
Therefore we deduce
\begin{equation}\label{eq-fin}
\int_{B_\rho} K^2u^{2p+m} \leq  C \rho^{-\frac{2(2p+m)}{p-1}}\int_{B_{2\rho}\setminus B_\rho}K^{-\frac{m+2}{p-1}}.
\end{equation}
If $n=3$, by our choice of $m=-2$ and by Bishop-Gromov volume comparison theorem we obtain
$$
\int_{B_\rho} K^2u^{2(p-1)} \leq  C \rho^{-4}\operatorname{Vol}(B_{2\rho})\leq\frac{C}{\rho}
$$
and thus, passing to the limit as $\rho$ tends to $\infty$, we see that $u=0$ on the set $\{x\in M\,|\,K(x)>0\}$, a contradiction since we assumed $u>0$ on $M$.

If $n\geq 4$, from \eqref{eq-fin}, our assumption on $K$ and Bishop-Gromov volume comparison theorem we have
$$\int_{B_\rho} K^2u^{2p+m} \leq  C \rho^{n-\frac{2(2p+m)}{p-1}+\sigma\frac{m+2}{p-1}}=C\rho^\gamma,$$
and if $\gamma<0$, passing to the limit as $\rho$ tends to $\infty$, we reach a contradiction with our assumption $u>0$ on $M$, as in the previous case.
Now note that we have $\gamma<0$ if and only if $$\sigma<\frac{(-(n-4)p+2m+n)}{m+2},$$ since $m>-2$. Since this expression is increasing in $m$, the optimal choice is $$\sigma<\frac{(-(n-4)p+2m+n)}{m+2},\quad\text{ with }m=\frac{-d+\sqrt{dn(2(n-1)-d(n-2))}}{n-1}.$$
The resulting expression is decreasing in $d$, and hence the optimal choice is
\begin{equation}\label{eq-sigma}
\sigma<\sigma^*=\frac{2n(p+1)-n^2(p-1)+4p+4\sqrt{np(2(1+p)-n(p-1))}}{2(2+n-p+\sqrt{np(2(1+p)-n(p-1))})}.
\end{equation}
A simple computation shows that $\sigma^*\geq\frac{2}{n-3}$ and hence the result follows. We explicitly note that $\sigma^*=\frac{2}{n-3}$ if $n=4$ or if $p=p_c=\frac{n+2}{n-2}$.

\end{proof}

\

\section{Proof of Theorem \ref{t-cri}}\label{s-cri}

\begin{proof} Let $(M^n,g)$, $n\geq 3$, be a Riemannian manifold with nonnegative Ricci curvature and let $u\in C^{2}(M)$ be a nonnegative solution of \eqref{eq-eq} with $p=p_c=\frac{n+2}{n-2}$. We assume $u$ is nontrivial, and hence by the strong Maximum principle $u$ is positive on $M$.
We now use Corollary \ref{cor1} and Lemma \ref{lem1} with $q=2$ and $\varepsilon=\frac{1}{2}$ and we obtain that for any nonnegative $\psi \in C^{2}_{c}(M)$
\begin{align*}
&\frac{(n-2)^2}{8}\int_M \psi^2 u^\frac{2(n-1)}{n-2} \left| \mathring{\nabla}^2 u^{-\frac{2}{n-2}}\right|^2+\frac{(n-2)^2}{4}\int_M \psi^2 u^\frac{2(n-1)}{n-2} \operatorname{Ric}\left(\nabla u^{-\frac{2}{n-2}}, \nabla u^{-\frac{2}{n-2}}\right)\\
&\qquad\,\,\,+\frac{n-2}{2n}\int_M\psi^2\Delta K\,u^{\frac{2(n-1)}{n-2}}\\
&\qquad\leq 2\int_{M}u^{-\frac{2}{n-2}}|\nabla u|^{2}|\nabla \psi|^2+\frac{n-1}{n}\int_MKu^\frac{n}{n-2}\langle\nabla u,\nabla\psi^2\rangle+\frac{n-2}{2n}\int_M\Delta\psi^2\,Ku^{\frac{2(n-1)}{n-2}}.
\end{align*}
For any $\rho>1$, let $\psi\in C^2_c(M)$ be such that $\psi\equiv 1$ in $B_\rho$, $\psi\equiv 0$ in $B_{2\rho}^c$, $0\leq \psi\leq 1$ on $M$ and $\psi$ satisfies
$$
|\nabla \psi|^{2}+|\Delta\psi| \leq C \rho^{-2}\quad\text{in }B_{2\rho}\setminus B_{\rho}.
$$
Then, using Lemma \ref{lem2}, for some constant $C>0$ we have
$$\int_{M}u^{-\frac{2}{n-2}}|\nabla u|^{2}|\nabla \psi|^2\leq C\int_{B_\rho^c}|\nabla u|^2$$
If $0\leq K\leq C(1+r^2)$ outside a compact set of $M$, using Lemma \ref{lem2} we have
$$\int_M|\Delta\psi^2|\,Ku^{\frac{2(n-1)}{n-2}}\leq\frac{C}{\rho^2}\int_{B_{2\rho}\setminus B_\rho}Ku^{\frac{2n}{n-2}}u^{-\frac{2}{n-2}}\leq C\int_{B_\rho^c}Ku^{\frac{2n}{n-2}}$$
and
\begin{align*}
\int_MKu^\frac{n}{n-2}|\nabla u||\nabla\psi^2|&\leq C\left(\int_{B_{2\rho}\setminus B_\rho}K^2u^\frac{2n}{n-2}|\nabla\psi|^2\right)^\frac{1}{2}\left(\int_{B_\rho^c}|\nabla u|^2\right)^\frac{1}{2}\\
&\leq C\left(\int_{B_{2\rho}\setminus B_\rho}Ku^\frac{2n}{n-2}\right)^\frac{1}{2}\left(\int_{B_\rho^c}|\nabla u|^2\right)^\frac{1}{2}.
\end{align*}
On the other hand, if $|\nabla K(x)|\leq \frac{C}{r(x)}K(x)$ outside a compact set of $M$, we have
\begin{align*}
\frac{n-1}{n}\int_MKu^\frac{n}{n-2}\langle\nabla u,\nabla\psi^2\rangle&=\frac{n-2}{2n}\int_MK\langle\nabla u^{\frac{2(n-1)}{n-2}},\nabla\psi^2\rangle\\
&= -\frac{n-2}{2n}\int_M\Delta\psi^2 \,Ku^{\frac{2(n-1)}{n-2}}-\frac{n-2}{2n}\int_Mu^{\frac{2(n-1)}{n-2}}\langle\nabla K,\nabla\psi^2\rangle
\end{align*}
and using Lemma \ref{lem2}, for $\rho>1$ large enough, we obtain
\begin{align*}
&\left|\frac{n-1}{n}\int_MKu^\frac{n}{n-2}\langle\nabla u,\nabla\psi^2\rangle+\frac{n-2}{2n}\int_M\Delta\psi^2\,Ku^{\frac{2(n-1)}{n-2}}\right|\\
&\qquad\leq
C\int_{B_{2\rho}\setminus B_\rho}|\nabla K| |\nabla\psi| u^{\frac{2n}{n-2}} u^{-\frac{2}{n-2}}\leq C \int_{B^c_\rho}Ku^\frac{2n}{n-2}.
\end{align*}
In either case, since $\Delta K\geq0$ on $M$, for every $\rho>1$ large enough we have
$$
\int_{B_\rho} u^\frac{2(n-1)}{n-2} \left| \mathring{\nabla}^2 u^{-\frac{2}{n-2}}\right|^2+\int_{B_\rho} u^\frac{2(n-1)}{n-2} \operatorname{Ric}\left(\nabla u^{-\frac{2}{n-2}}, \nabla u^{-\frac{2}{n-2}}\right)\leq C\left(\int_{B_{\rho}^c}|\nabla u|^{2}+\int_{B^c_\rho}Ku^\frac{2n}{n-2}\right)
$$
for some constant $C>0$. Passing to the limit as $\rho$ tends to $\infty$, since $\operatorname{Ric}\geq0$ we conclude that
$$\mathring{\nabla}^2 u^{-\frac{2}{n-2}}\equiv0,\qquad \operatorname{Ric}\left(\nabla u^{-\frac{2}{n-2}}, \nabla u^{-\frac{2}{n-2}}\right)\equiv0\qquad\text{ on }M.$$

From Lemma \ref{lem5} we deduce that $(M^n,g)$ is either
\begin{itemize}
     \item[i)] a direct product $(\mathbb{R}\times N^{n-1}, dr^2+g_N)$, where $(N^{n-1},g_N)$ is a $(n-1)$-dimensional complete Riemannian manifold with nonnegative Ricci curvature and $u^{-\frac{2}{n-2}}=f=f(r)=ar+b$ for some $a,b\in\mathbb{R}$ with $a\neq0$, or
     \item[ii)] $\mathbb{R}^n$ with the Euclidean metric and $u^{-\frac{2}{n-2}}=f=f(x)=a|x-x_0|^2+b$ for some $a,b\in\mathbb{R}$ with $a\neq0$.
   \end{itemize}
Since
$$\nabla f=-\frac{2}{n-2}u^{-\frac{n}{n-2}}\nabla u,\qquad \Delta f=-\frac{2}{n-2}u^{-\frac{n}{n-2}}\Delta u+\frac{2n}{(n-2)^2}u^{-\frac{2(n-1)}{n-2}}|\nabla u|^2$$
a simple computation shows that in the first case
$$0=\Delta f=\frac{2}{n-2}Ku^\frac{2}{n-2}+\frac{2n}{(n-2)^2}u^{-\frac{2(n-1)}{n-2}}|\nabla u|^2\geq0\qquad\text{ on }M,$$
which is in contradiction with our assumption $u>0$ on $M$. Then $(M^n,g)$ is $\mathbb{R}^n$ with the Euclidean metric and $u^{-\frac{2}{n-2}}=f=f(x)=a|x-x_0|^2+b$ for some $x_0\in\mathbb{R}^n$, $a,b\in\mathbb{R}$ with $a\neq0$. Since $u,f$ are positive functions we must have $a,b>0$ and thus $$u(x)=(a|x-x_0|^2+b)^{-\frac{n-2}{2}}.$$
Inserting this expression into the equation we find that we must have $K\equiv n(n-2)ab\in(0,\infty)$, and thus we conclude.
\end{proof}

\

\section{Proof of Theorem \ref{t-cri2}}\label{s-cri2}

\begin{proof} Let $(M^n,g)$, $n\geq 3$, be a Riemannian manifold with nonnegative Ricci curvature and let $u\in C^{2}(M)$ be a nonnegative solution of \eqref{eq-eq} with $p=p_c=\frac{n+2}{n-2}$, $K\in C^2(M)$, $0\not\equiv K\geq0$ and $\Delta K\geq0$. We assume $u$ is nontrivial, and hence by the strong maximum principle $u$ is positive on $M$.
We now use Corollary \ref{cor1} and Lemma \ref{lem1} with $\varepsilon=\frac{1}{q}$ and we obtain that for any nonnegative $\psi \in C^{2}_{c}(M)$
\begin{align*}
&\frac{(n-2)^2}{8}\int_M \psi^q u^\frac{2(n-1)}{n-2} \left| \mathring{\nabla}^2 u^{-\frac{2}{n-2}}\right|^2+\frac{(n-2)^2}{4}\int_M \psi^q u^\frac{2(n-1)}{n-2} \operatorname{Ric}\left(\nabla u^{-\frac{2}{n-2}}, \nabla u^{-\frac{2}{n-2}}\right)\\
&\qquad\,\,\,+\frac{n-2}{2n}\int_M\psi^q\Delta K\,u^\frac{2(n-1)}{(n-2)}\\
&\quad\leq \frac{q^2}{2}\int_{M}\psi^{q-2}u^{-\frac{2}{n-2}}|\nabla u|^{2}|\nabla \psi|^2 +\frac{n-1}{n}\int_MKu^{\frac{n}{n-2}}\langle\nabla\psi^q,\nabla u\rangle
+\frac{n-2}{2n}\int_M\Delta \psi^q\,Ku^\frac{2(n-1)}{(n-2)}.
\end{align*}
We have
\begin{align*}
  &\frac{n-1}{n}\int_MKu^{\frac{n}{n-2}}\langle\nabla\psi^q,\nabla u\rangle=\frac{n-2}{2n}\int_MK\langle\nabla\psi^q,\nabla u^{\frac{2(n-1)}{(n-2)}}\rangle\\
&\qquad=-\frac{n-2}{2n}\int_MK\Delta\psi^q\,u^\frac{2(n-1)}{(n-2)}-\frac{n-2}{2n}\int_M\langle\nabla\psi^q,\nabla K\rangle u^\frac{2(n-1)}{(n-2)}.
\end{align*}
Hence, by our assumptions on $K$ and since $0\leq\psi\leq1$,
\begin{align*}
&\left|\frac{n-1}{n}\int_MKu^\frac{n}{n-2}\langle\nabla u,\nabla\psi^q\rangle+\frac{n-2}{2n}\int_M\Delta\psi^q\,Ku^{\frac{2(n-1)}{n-2}}\right|\\
&\qquad\leq
C\int_{B_{2\rho}\setminus B_\rho}|\nabla K| \psi^{q-1}|\nabla\psi| u^{\frac{2(n-1)}{n-2}} \leq \frac{\bar{C}}{\rho^2} \int_{B_{2\rho}\setminus B_\rho}\psi^{q-2}Ku^\frac{2(n-1)}{n-2},
\end{align*}
and thus
\begin{align*}
&\frac{(n-2)^2}{8}\int_M \psi^q u^\frac{2(n-1)}{n-2} \left| \mathring{\nabla}^2 u^{-\frac{2}{n-2}}\right|^2+\frac{(n-2)^2}{4}\int_M \psi^q u^\frac{2(n-1)}{n-2} \operatorname{Ric}\left(\nabla u^{-\frac{2}{n-2}}, \nabla u^{-\frac{2}{n-2}}\right)\\
&\quad\leq 8\int_{M}\psi^{q-2}u^{-\frac{2+\gamma}{n-2}}|\nabla u|^{2}u^\frac{\gamma}{n-2}|\nabla \psi|^2 +\frac{\bar{C}}{\rho^2} \int_{B_{2\rho}\setminus B_\rho}\psi^{q-2}Ku^\frac{2(n-1)}{n-2}
\end{align*}
for any $\gamma\geq 0$. Multiplying the equation for $u$ by $\psi^{q-2} u^{\frac{n-4-\gamma}{n-2}}$ and integrating by parts we obtain
\begin{equation}\label{eq-2}
\frac{n-4-\gamma}{n-2}\int_M \psi^{q-2} u^{-\frac{2+\gamma}{n-2}}|\nabla u|^{2} +(q-2)\int_M \psi^{q-3} \,u^{\frac{n-4-\gamma}{n-2}}\langle\nabla u,\nabla \psi\rangle = \int_M \psi^{q-2} K u^{\frac{2n-2-\gamma}{n-2}}.
\end{equation}
Hence, if $n=3$ we choose $\gamma=0$, $q=4$ thus obtaining
\begin{align*}
\int_{M}\psi^2u^{-2}|\nabla u|^{2} &= 2\int_M \psi \,u^{-1}\langle\nabla u,\nabla \psi\rangle -\int_M\psi^2Ku^4 \\
&\leq \frac12 \int_{M}\psi^2u^{-2}|\nabla u|^{2} + 2 \int_M |\nabla \psi|^2-\int_M\psi^2Ku^4.
\end{align*}
For any $\rho>1$, let $\psi\in C^2_c(M)$ be such that $\psi\equiv 1$ in $B_\rho$, $\psi\equiv 0$ in $B_{2\rho}^c$, $0\leq \psi\leq 1$ on $M$ and $\psi$ satisfies
$$
|\nabla \psi|^{2} \leq C \rho^{-2} \quad\text{in }B_{2\rho}\setminus B_{\rho}.
$$
By Bishop-Gromov volume estimate, we conclude
$$
\int_{B_{2\rho}}\psi^2u^{-2}|\nabla u|^{2} \leq C \rho-2\int_M\psi^2Ku^4\leq C_1\left(C\rho-2\int_M\psi^2Ku^4\right),
$$
for every $C_1\geq1$, and thus
\begin{align*}
&\int_M \psi^4 u^4 \left| \mathring{\nabla}^2 u^{-2}\right|^2+\int_M \psi^4 u^4 \operatorname{Ric}\left(\nabla u^{-2}, \nabla u^{-2}\right)\\
&\qquad\leq C\int_{M}\psi^2u^{-2}|\nabla u|^{2}|\nabla \psi|^2+\frac{\bar{C}}{\rho^2} \int_{B_{2\rho}\setminus B_\rho}\psi^2Ku^4\\
&\qquad\leq \frac{CC_1}{\rho^2}\left(C\rho-2\int_M\psi^2Ku^4\right)+\frac{\bar{C}}{\rho^2} \int_{B_{2\rho}\setminus B_\rho}\psi^2Ku^4\leq\frac{C}{\rho},
\end{align*}
if $C_1\geq1$ is chosen large enough. Letting $\rho\to\infty$ we deduce that
$$\mathring{\nabla}^2 u^{-2}\equiv0,\qquad \operatorname{Ric}\left(\nabla u^{-2}, \nabla u^{-2}\right)\equiv0\qquad\text{ on }M,$$
and we conclude as in the proof of Theorem \ref{t-cri}.
If $n\geq 4$, for every $n-4<\gamma<2n-6$ and every $\eps>0$ by \eqref{eq-2} we have
\begin{align*}
  \int_M \psi^{q-2} u^{-\frac{2+\gamma}{n-2}}|\nabla u|^{2}&\leq (q-2)C\int_M \psi^{q-3} \,u^{\frac{n-4-\gamma}{n-2}}\langle\nabla u,\nabla \psi\rangle-C\int_M \psi^{q-2} K u^{\frac{2n-2-\gamma}{n-2}}  \\
   & \leq C\eps\int_M \psi^{q-2} \,u^{-\frac{2+\gamma}{n-2}}|\nabla u|^2+\frac{C}{\eps}\int_M \psi^{q-4} \,u^{\frac{2n-6-\gamma}{n-2}}|\nabla \psi|^2\\
   &\quad-C\int_M \psi^{q-2} K u^{\frac{2n-2-\gamma}{n-2}}
\end{align*}
and thus for some $C>0$ depending on $\gamma$ we have
\begin{align}\label{eq-3}
  \int_M \psi^{q-2} u^{-\frac{2+\gamma}{n-2}}|\nabla u|^{2}&\leq C\int_M \psi^{q-4} \,u^{\frac{2n-6-\gamma}{n-2}}|\nabla \psi|^2-C\int_M \psi^{q-2} K u^{\frac{2n-2-\gamma}{n-2}}.
\end{align}
Now for every $\eps>0$, since $\gamma<2n-6$, we have
\begin{align*}
  \int_M \psi^{q-4} \,u^{\frac{2n-6-\gamma}{n-2}}|\nabla \psi|^2\leq \frac{\eps}{2} \int_M \psi^{q-2} Ku^{\frac{2n-2-\gamma}{n-2}}+\frac{1}{2\eps}\int_M\psi^{q-1-n+\frac{\gamma}{2}} K^{-\frac{2n-6-\gamma}{4}}|\nabla\psi|^{\frac{2n-2-\gamma}{2}}.
\end{align*}
Choosing $\eps>0$ suitably small and substituting in \eqref{eq-3} we find
\begin{align*}
  \int_M \psi^{q-2} u^{-\frac{2+\gamma}{n-2}}|\nabla u|^{2}&\leq C\int_M\psi^{q-1-n+\frac{\gamma}{2}} |\nabla\psi|^{\frac{2n-2-\gamma}{2}}K^{-\frac{2n-6-\gamma}{4}}-\int_M\psi^{q-2}Ku^{\frac{2n-2-\gamma}{n-2}}\\
  &\leq C_1\left(C\int_M\psi^{q-1-n+\frac{\gamma}{2}} |\nabla\psi|^{\frac{2n-2-\gamma}{2}}K^{-\frac{2n-6-\gamma}{4}}-\int_M\psi^{q-2}Ku^{\frac{2n-2-\gamma}{n-2}}\right),
\end{align*}
for large enough $C>0$ and every $C_1\geq1$. For any $\rho>1$, we choose again $\psi\in C^2_c(M)$ such that $\psi\equiv 1$ in $B_\rho$, $\psi\equiv 0$ in $B_{2\rho}^c$, $0\leq \psi\leq 1$ on $M$ and such that $\psi$ satisfies
$$
|\nabla \psi|^{2} \leq C \rho^{-2}\quad\text{in }B_{2\rho}\setminus B_{\rho}.
$$
Thus, by our assumptions on $u,K$ and using the Bishop-Gromov volume comparison theorem, choosing $q\geq1+n-\frac{\gamma}{2}$ and $C_1\geq1$ large enough we obtain
\begin{align*}
&\int_M \psi^q u^\frac{2(n-1)}{n-2} \left| \mathring{\nabla}^2 u^{-\frac{2}{n-2}}\right|^2+\int_M \psi^q u^\frac{2(n-1)}{n-2} \operatorname{Ric}\left(\nabla u^{-\frac{2}{n-2}}, \nabla u^{-\frac{2}{n-2}}\right)\\
&\qquad\leq C(\gamma)\int_{M}\psi^{q-2}u^{-\frac{2+\gamma}{n-2}}|\nabla u|^{2} u^{\frac{\gamma}{n-2}}|\nabla \psi|^2+\frac{\bar{C}}{\rho^2} \int_{B_{2\rho}\setminus B_\rho}\psi^{q-2}Ku^\frac{2(n-1)}{n-2}\\
&\qquad\leq \frac{1}{\rho^2}\left(\sup_{B_{2\rho}\setminus B_\rho}u\right)^\frac{\gamma}{n-2}\left(C\int_{M}\psi^{q-2}u^{-\frac{2+\gamma}{n-2}}|\nabla u|^{2}+\bar{C} \int_{B_{2\rho}\setminus B_\rho}\psi^{q-2}Ku^\frac{2n-2-\gamma}{n-2}\right)\\
&\qquad\leq \frac{1}{\rho^2}\left(\sup_{B_{2\rho}\setminus B_\rho}u\right)^\frac{\gamma}{n-2}\bigg(CC_1\int_M\psi^{q-1-n+\frac{\gamma}{2}} |\nabla\psi|^{\frac{2n-2-\gamma}{2}}K^{-\frac{2n-6-\gamma}{4}}-C_1\int_M\psi^{q-2}Ku^{\frac{2n-2-\gamma}{n-2}}\\
&\qquad\qquad+\bar{C} \int_{B_{2\rho}\setminus B_\rho}\psi^{q-2}Ku^\frac{2n-2-\gamma}{n-2}\bigg)\\
&\qquad\leq \frac{C}{\rho^2}\left(\sup_{B_{2\rho}\setminus B_\rho}u\right)^\frac{\gamma}{n-2}\int_M |\nabla\psi|^{\frac{2n-2-\gamma}{2}}K^{-\frac{2n-6-\gamma}{4}}
\leq C\rho^{\gamma\left(\frac{\alpha}{n-2}+\frac{1}{2}-\frac{\sigma}{4}\right)+\frac{n-3}{2}\sigma-1}.
\end{align*}
Now if $$\lambda=\lambda(\gamma):=\gamma\left(\frac{\alpha}{n-2}+\frac{1}{2}-\frac{\sigma}{4}\right)+\frac{n-3}{2}\sigma-1<0$$
for some $\gamma\in(n-4,2n-6)$, passing to the limit as $\rho$ tends to $\infty$ yields
$$\mathring{\nabla}^2 u^{-\frac{2}{n-2}}\equiv0,\qquad \operatorname{Ric}\left(\nabla u^{-\frac{2}{n-2}}, \nabla u^{-\frac{2}{n-2}}\right)\equiv0\qquad\text{ on }M,$$
and again we can conclude as in the proof of Theorem \ref{t-cri}. Since $\lambda(\gamma)$ is monotone in $\gamma$, we have that $\lambda(\gamma)<0$ for some $\gamma\in(n-4,2n-6)$ if and only if $\lambda(n-4)<0$ or $\lambda(2n-6)<0$. These conditions are equivalent to, respectively, $$\alpha<-\frac{(n-2)^2\sigma+2(n-2)(n-6)}{4(n-4)}\quad\text{or}\quad\alpha<-\frac{(n-2)(n-4)}{2(n-3)}.$$ Then by our assumptions on $\alpha,\sigma$ there exists $\gamma\in(n-4,2n-6)$ such that $\lambda(\gamma)<0$, and the proof is complete.
\end{proof}

\

\section{Proof of Theorem \ref{t-cri3}}\label{s-cri3}

By Corollary \ref{cor2} for every nonnegative $\psi \in C^{2}_{c}(M)$ and every $q\geq2$, since $\Delta K\geq 0$, we have
\begin{align}\label{eq-iniz}
8\int_M \psi^q e^{\frac{u}{2}} \left| \mathring{\nabla}^2 e^{-\frac{u}{2}}\right|^2+4\int_M \psi^q e^{\frac{u}{2}} R \left|\nabla e^{-\frac{u}{2}}\right|^2\leq 2 \int_{M} K e^\frac{u}{2}\Delta\psi^q+\int_M e^{-\frac{u}{2}}|\nabla u|^{2}\Delta\psi^q.
\end{align}
Integrating by parts and using Young's inequality, for every $\eps>0$, we have
\begin{align*}
\frac 14 &\int_M e^{-\frac{u}{2}}|\nabla u|^{2}\Delta\psi^q =\int_M e^{\frac{u}{2}} \left|\nabla e^{-\frac{u}{2}}\right|^2\Delta\psi^q \\
&= -\frac12 \int_M e^{\frac{u}{2}}\left|\nabla e^{-\frac{u}{2}}\right|^2 \langle \nabla u,\nabla \psi^q\rangle -\int_M e^{\frac{u}{2}} \langle \nabla \left|\nabla e^{-\frac{u}{2}}\right|^2, \nabla \psi^q \rangle\\
&=-\frac18 \int_M e^{-\frac{u}{2}}|\nabla u|^2 \langle \nabla u,\nabla \psi^q\rangle -2\int_M e^{\frac{u}{2}}  \nabla^2 e^{-\frac{u}{2}}\left(\nabla e^{-\frac{u}{2}}, \nabla \psi^q \right)\\
&= -\frac18 \int_M e^{-\frac{u}{2}}|\nabla u|^2 \langle \nabla u,\nabla \psi^q\rangle -2\int_M e^{\frac{u}{2}}  \mathring{\nabla}^2 e^{-\frac{u}{2}}\left(\nabla e^{-\frac{u}{2}}, \nabla \psi^q \right)-\int_M e^{\frac{u}{2}}  \Delta e^{-\frac{u}{2}}\langle\nabla e^{-\frac{u}{2}}, \nabla \psi^q \rangle\\
&\leq -\frac18 \int_M e^{-\frac{u}{2}}|\nabla u|^2 \langle \nabla u,\nabla \psi^q\rangle +\eps\int_M \psi^q e^{\frac{u}{2}}  \left|\mathring{\nabla}^2 e^{-\frac{u}{2}}\right|^2+\frac{C}{\eps}\int_M \psi^{q-2} e^{-\frac{u}{2}}|\nabla u|^2 |\nabla\psi|^2 \\
&\quad +\frac18 \int_M e^{-\frac{u}{2}}|\nabla u|^2 \langle \nabla u,\nabla \psi^q\rangle +\frac14\int_M K e^{\frac{u}{2}}\langle\nabla u,\nabla \psi^q\rangle.
\end{align*}
Then, from \eqref{eq-iniz} choosing $\eps=1$, $q=4$, we obtain
\begin{align}\label{eq-iniz2}
4\int_M & \psi^4 e^{\frac{u}{2}} \left| \mathring{\nabla}^2 e^{-\frac{u}{2}}\right|^2+4\int_M \psi^4 e^{\frac{u}{2}} R \left|\nabla e^{-\frac{u}{2}}\right|^2\\\nonumber
&\leq 2 \int_{M} K e^\frac{u}{2}\Delta\psi^4+C\int_M \psi^{2} e^{-\frac{u}{2}}|\nabla u|^2 |\nabla\psi|^2 +\int_M K e^{\frac{u}{2}}\langle\nabla u,\nabla \psi^4\rangle
\end{align}

{\em Case 1:} suppose that $0\leq K(x)\leq C r(x)^2$ outside a compact set of $M$. Since $M$ has nonnegative Ricci curvature, by using \cite{biaset}, one can see that there exist $C>0$ and  $\theta\in(0,\frac 12)$ such that, for any $\rho$ large enough, there exists a cut-off function $\psi\in C^2(M)$ such that $\psi\equiv1$ on $B_{\rho^\theta}$, $\psi\equiv0$ on $B_{\rho}^c$ and
$$|\nabla\psi|\leq\frac{C}{r \log\rho}, \quad |\Delta \psi|\leq\frac{C}{r^2 \log\rho}\qquad\text{on } B_{\rho}\setminus B_{\rho^\theta}.$$
Note also that, up to increasing the constant $C>0$, we have $$|\nabla\psi|\leq\frac{C}{(r+1) \log\rho}, \qquad\text{on } B_{\rho}\setminus B_{\rho^\theta}.$$
With this choice of $\psi$, by our assumption on $u$, for $\rho>1$ large enough
\begin{align}\label{eq-guardone}
\int_M \psi^{2} e^{-\frac{u}{2}}|\nabla u|^2 |\nabla\psi|^2 &\leq \frac{C}{(\log\rho)^2}\int_M\frac{\psi^{2}}{(r+1)^2}e^{-\frac{u}{2}}|\nabla u|^2
\end{align}
and
\begin{align*}
\int_M\frac{\psi^{2}}{(r+1)^2}e^{-\frac{u}{2}}|\nabla u|^2 &=-2\int_M\frac{\psi^{2}}{(r+1)^2}\langle \nabla e^{-\frac{u}{2}},\nabla u\rangle\\
&= 2\int_M\frac{1}{(r+1)^2} e^{-\frac{u}{2}}\langle \nabla \psi^2 ,\nabla u\rangle
+2\int_M\frac{\psi^2}{(r+1)^2} e^{-\frac{u}{2}}\Delta u\\
&\quad -4\int_M\frac{\psi^2}{(r+1)^3} e^{-\frac{u}{2}}\langle \nabla r ,\nabla u\rangle\\
&\leq \frac14 \int_M\frac{\psi^{2}}{(r+1)^2}e^{-\frac{u}{2}}|\nabla u|^2 + C \int_M \frac{1}{(r+1)^2}e^{-\frac u2}|\nabla\psi|^2-2\int_M \frac{\psi^2}{(r+1)^2}Ke^{\frac u2}\\
&\quad+\frac14  \int_M\frac{\psi^{2}}{(r+1)^2}e^{-\frac{u}{2}}|\nabla u|^2 + C \int_M \frac{\psi^2}{(r+1)^4}e^{-\frac u2}.
\end{align*}
Thus, by our assumptions on $u$, the choice of $\psi$ and Bishop-Gromov volume comparison, arguing as in the proof of Theorem \ref{t-sub} when $n=2$, we have
\begin{align*}
\int_M\frac{\psi^{2}}{(r+1)^2}e^{-\frac{u}{2}}|\nabla u|^2 &\leq C \int_M \frac{1}{(r+1)^2}e^{-\frac u2}|\nabla\psi|^2+ C \int_M \frac{\psi^2}{(r+1)^4}e^{-\frac u2}\\
&\leq \frac{C}{(\log\rho)^2} \int_{B_\rho\setminus B_{\rho^\theta}} \frac {(\log r)^\gamma}{r^2} +C \int_{B_\rho}  \frac {r^2(\log r)^\gamma}{(r+1)^4}\\
&\leq C (\log\rho)^{1+\gamma}.
\end{align*}
From \eqref{eq-guardone} for every $\rho>1$ large enough we obtain
\begin{equation}\label{eq-c2}
\int_M \psi^{2} e^{-\frac{u}{2}}|\nabla u|^2 |\nabla\psi|^2\leq \frac{C}{ (\log\rho)^{1-\gamma}}.
\end{equation}
Moreover
$$
 \int_{M} K e^\frac{u}{2}\Delta\psi^4 \leq C \int_{B_\rho\setminus B_{\rho^{\theta}}} K e^u \frac{e^{-\frac u2}}{r^2\log\rho}\leq C\int_{(B_{\rho^{\theta}})^c} K e^u.
$$
Finally
\begin{align*}
\int_M K e^{\frac{u}{2}}\langle\nabla u,\nabla \psi^4\rangle &\leq \frac{1}{2}\int_{(B_{\rho^\theta})^c} K e^ u +\frac{1}{2}\int_{B_{\rho}\setminus B_{\rho^\theta}} K |\nabla u|^2 |\nabla\psi|^2\psi^2\\
&\leq \frac{1}{2}\int_{(B_{\rho^{\theta}})^c} K e^ u +\frac{C}{(\log\rho)^2}\int_{B_{\rho}} |\nabla u|^2 \psi^2.
\end{align*}
We multiply the equation satisfied by $u$, i.e. \eqref{eq-eq2},  by $\psi^2u$ and integrate by parts. Since $K\geq0$, $0\leq\psi\leq1$ on $M$, $Ke^u\in L^1(M)$ and $-4\log r -\gamma\log\log r\leq u\leq C \log r $ for $r$ large enough, for every large enough $\rho>1$ we have
\begin{align*}
\int_{B_{\rho}} |\nabla u|^2 \psi^2&=\int_M\psi^2Kue^u-2\int_M\psi u\langle\nabla u,\nabla\psi\rangle\\
&\leq \int_{B_{\rho}}\psi^2K u e^u +\frac12\int_{B_{\rho}} |\nabla u|^2 \psi^2+ 2\int_{B_{\rho}\setminus B_{\rho^\theta}} u^2|\nabla \psi|^2\\
&\leq \frac12\int_{B_{\rho}} |\nabla u|^2\psi^2+C\log\rho +C,
\end{align*}
where we also used Bishop-Gromov volume comparison theorem. Then
$$
\int_{B_{\rho}} |\nabla u|^2 \psi^2 \leq C\log\rho +C
$$
and therefore
$$
\int_M K e^{\frac{u}{2}}\langle\nabla u,\nabla \psi^4\rangle  \leq \frac{1}{2}\int_{(B_{\rho^{\theta}})^c} K e^ u+\frac{C}{\log\rho}.
$$
Inserting these estimates into \eqref{eq-iniz2}, we obtain for every $\rho>1$ large enough
\begin{align*}
4\int_M  \psi^4 e^{\frac{u}{2}} \left| \mathring{\nabla}^2 e^{-\frac{u}{2}}\right|^2+4\int_M \psi^4 e^{\frac{u}{2}} R \left|\nabla e^{-\frac{u}{2}}\right|^2\leq C\int_{(B_{\rho^{\theta}})^c} K e^ u+ \frac{C}{ (\log\rho)}+\frac{C}{ (\log\rho)^{1-\gamma}}.
\end{align*}
Passing to the limit as $\rho$ tends to $\infty$  we obtain
$$\mathring{\nabla}^2 e^{-\frac{u}{2}}\equiv0,\qquad R\left|\nabla e^{-\frac{u}{2}}\right|^2\equiv0\qquad\text{ on }M,$$
and again we can conclude as in the proof of Theorem \ref{t-cri}.

\medskip

{\em Case 2:} if outside a compact set $|\nabla K(x)|\leq \frac{C}{r(x)}K(x)$, integrating by parts we have
\begin{align*}
\int_M K e^{\frac{u}{2}}\langle\nabla u,\nabla \psi^4\rangle &= 2\int_M K \langle \nabla e^{\frac u2},\nabla \psi^4\rangle \\
&= -2\int_M e^{\frac{u}{2}} \langle \nabla K,\nabla\psi^4\rangle - 2 \int_M K e^{\frac{u}{2}} \Delta \psi^4\\
&\leq C\int_M \frac{K}{r}e^{\frac{u}{2}}|\nabla\psi| - 2 \int_M K e^{\frac{u}{2}} \Delta \psi^4.
\end{align*}
Substituting in \eqref{eq-iniz2} we obtain
\begin{align}\label{eq-iniz3}
4\int_M & \psi^4 e^{\frac{u}{2}} \left| \mathring{\nabla}^2 e^{-\frac{u}{2}}\right|^2+4\int_M \psi^4 e^{\frac{u}{2}} R \left|\nabla e^{-\frac{u}{2}}\right|^2\\\nonumber
&\leq C\int_M \psi^{2} e^{-\frac{u}{2}}|\nabla u|^2 |\nabla\psi|^2 +C\int_M \frac{K}{r}e^{\frac{u}{2}}|\nabla\psi|.
\end{align}
Let $\Phi\in C^1(\mathbb{R})$ be a nonnegative function such that $0\leq\Phi\leq1$, $\Phi'\leq0$, $\Phi=1$ on $(-\infty,1]$, $\Phi=0$ on $[2,-\infty)$ and $|\Phi'|\leq C$ for some positive constant $C$. For any $\rho>1$ define $\psi=\Phi\left(\frac{2\log r}{\log \rho}\right)$, then we have $\psi\equiv1$ on $B_{\sqrt\rho}$, $\psi\equiv0$ on $B_{\rho}^c$ and $|\nabla\psi|\leq\frac{C}{r \log\rho}$ on $B_{\rho}\setminus B_{\sqrt\rho}$.
With this choice of $\psi$, by our assumption on $u$, we get
$$
\int_M \frac{K}{r}e^{\frac{u}{2}}|\nabla\psi| \leq \frac{C}{\log\rho}\int_{B_{\rho}\setminus B_{\sqrt\rho}}Ke^u \left( \frac{e^{-\frac u2}}{r^2}\right)\leq C\int_{B_{\sqrt\rho}^c}Ke^u.
$$
On the other hand, arguing as in Case 1, we obtain inequality \eqref{eq-c2}. Finally, using \eqref{eq-iniz3}, we have
$$
4\int_M \psi^4 e^{\frac{u}{2}} \left| \mathring{\nabla}^2 e^{-\frac{u}{2}}\right|^2+4\int_M \psi^4 e^{\frac{u}{2}} R \left|\nabla e^{-\frac{u}{2}}\right|^2 \leq  \frac{C}{ (\log\rho)^{1-\gamma}}+ C\int_{B_{\sqrt\rho}^c}Ke^u.
$$
Passing to the limit as $\rho$ tends to $\infty$  we obtain
$$\mathring{\nabla}^2 e^{-\frac{u}{2}}\equiv0,\qquad R\left|\nabla e^{-\frac{u}{2}}\right|^2\equiv0\qquad\text{ on }M,$$
and again we can conclude as in the proof of Theorem \ref{t-cri}.

\

\

\begin{ackn}
\noindent The first author is member of the {\em GNSAGA, Gruppo Nazionale per le Strutture Algebriche, Geometriche e le loro Applicazioni} of Indam. The second author is a member of {\em GNAMPA, Gruppo Nazionale per l'Analisi Matematica, la Probabilità e le loro Applicazioni} of Indam.
\end{ackn}

\

\

\bibliographystyle{amsplain}

\begin{thebibliography}{999}

\bibitem{aub} T. Aubin, Problèmes isopérimétriques et espaces de Sobolev, J. Differential Geometry 11 (1976), no. 4, 573--598.


\bibitem{biaset} D. Bianchi, A. G. Setti, Laplacian cut-offs, porous and fast diffusion on manifolds and other applications, Calc. Var. Partial Differential Equations 57 (2018), no. 1, Paper No. 4, 33 pp.

\bibitem{bidrao} M.-F. Bidaut-V\'eron, T. Raoux, Asymptotics of solutions of some nonlinear elliptic systems, Comm. in Partial Differential Equations 21 (1996) 7-8, 1035--1086.


\bibitem{bre} S. Brendle, Sobolev inequalities in manifolds with nonnegative curvature, to appear in Comm. Pure Appl. Math.

\bibitem{bremer} H. Brezis, F. Merle, Uniform estimates and blow-up behavior for solutions of $-\Delta u = V(x) e^u$ in two dimensions,
Comm. Partial Differential Equations 16 (1991), no. 8-9, 1223--1253.



\bibitem{cafgidspr} L. Caffarelli, B. Gidas, J. Spruck, Asymptotic symmetry and local behavior of semilinear elliptic equations with critical Sobolev growth, Comm. Pure Appl. Math. 42 (1989), no. 3, 271--297.


\bibitem{car} G. Carron, Inégalités isopérimétriques de Faber-Krahn et conséquences, Actes de la Table Ronde de Géométrie Différentielle (Luminy, 1992), 205--232, Sémin. Congr., 1, Soc. Math. France, Paris, 1996.



\bibitem{cascatman} D. Castorina, G. Catino, C. Mantegazza, A triviality result for semilinear parabolic equations, Math. Eng. 4 (2022), no. 1, Paper No. 002, 15 pp.



\bibitem{catmanmaz} G. Catino, C. Mantegazza, L. Mazzieri, On the global structure of conformal gradient solitons with nonnegative Ricci tensor, Commun. Contemp. Math. 14 (2012), no. 6, 1250045, 12 pp.



\bibitem{checol} J. Cheeger, T. H. Colding, Lower bounds on Ricci curvature and the almost rigidity of warped products, Ann. of Math. (2) 144 (1996), no. 1, 189--237.

\bibitem{cheli} W. X. Chen, C. Li, Classification of solutions of some nonlinear elliptic equations, Duke Math. J. 63 (1991), no. 3, 615--622.

\bibitem{cheli2} W. X. Chen, C. Li, Qualitative properties of solutions to some nonlinear elliptic equations in $R^2$, Duke Math. J. 71 (1993), no. 2, 427--439.


\bibitem{cirfigron} G. Ciraolo, A. Figalli, A. Roncoroni, Symmetry results for critical anisotropic p-Laplacian equations in convex cones, Geom. Funct. Anal. 30 (2020), 770--803.

\bibitem{gidninir} B. Gidas, W. M. Ni, L. Nirenberg, Symmetry of positive solutions of nonlinear elliptic equations in $R^n$, Mathematical analysis and applications, Part A, pp. 369--402,
Adv. in Math. Suppl. Stud., 7a, Academic Press, New York-London, 1981.

\bibitem{gidspr} B. Gidas, J. Spruck, Global and local behavior of positive solutions of nonlinear elliptic equations, Comm. Pure Appl. Math. 34 (1981), no. 4, 525--598.


\bibitem{gri} A. Grigor'yan, Analytic and geometric background of recurrence and non-explosion of the Brownian motion on Riemannian manifolds, Bulletin of Amer. Math. Soc. 36 (1999) 135--249.


\bibitem{kim} S. Kim, Scalar curvature on noncompact complete Riemannian manifolds, Nonlinear Anal. 26:12 (1996), 1985--1993.



\bibitem{lizha} Y. Li, L. Zhang, Liouville-type theorems and Harnack-type inequalities for semilinear elliptic equations, J. Anal. Math. 90 (2003), 27--87.


\bibitem{mursoa} M. Muratori, N. Soave, Some rigidity results for Sobolev inequalities and related PDEs on Cartan-Hadamard manifolds, to appear in Ann. Sc. Norm. Super. Pisa Cl. Sci.


\bibitem{oba} M. Obata, The conjectures on conformal transformations of Riemannian manifolds,
J. Differential Geometry 6 (1971/72), 247--258.



\bibitem{tal} G. Talenti, Best constant in Sobolev inequality, Ann. Mat. Pura Appl. (4) 110 (1976), 353--372.


\bibitem{tas} Y. Tashiro, Complete Riemannian manifolds and some vector fields, Trans. Amer. Math. Soc. 117 (1965) 251--275.


\bibitem{wanzhu} F. Wang,X. Zhu, The structure of spaces with Bakry-\'Emery Ricci curvature bounded below, J. Reine Angew. Math. 757 (2019), 1--50.



\bibitem{wei} G. Wei, Yamabe equation on some complete noncompact manifolds,
Pacific J. Math. 302 (2019), no. 2, 717--739.





\end{thebibliography}

\

\

\

\end{document}